\definecolor{myblue}{RGB}{0,29,119}
\newtheorem{theorem}{Theorem}[section]
\newtheorem{proposition}[theorem]{Proposition}
\newtheorem{corollary}[theorem]{Corollary}
\newtheorem{lemma}[theorem]{Lemma}
\theoremstyle{definition}
\newtheorem{definition}[theorem]{Definition}
\newtheorem{remark}[theorem]{Remark}
\newtheorem{setup}[theorem]{Set-up}
\newtheorem{remarks}[theorem]{Remarks}
\newtheorem{problem}[theorem]{Problem}
\newtheorem*{theorem*}{Theorem}
\newcommand{\Zz}{{\mathbb Z}} 
\DeclareMathOperator{\gldim}{gl.\,\! dim}
\DeclareMathOperator{\Hom}{Hom}
\DeclareMathOperator{\ext}{Ext}
\DeclareMathOperator{\Ext}{Ext}
\DeclareMathOperator{\add}{add}
\DeclareMathOperator{\modd}{mod-\!}
\DeclareMathOperator{\Modd}{Mod-\!}
\DeclareMathOperator{\rad}{rad}
\newcommand{\cC}{{\mathcal C}}
\newcommand{\cF}{{\mathcal F}}
\newcommand{\cM}{{\mathcal M}}
\newcommand{\cT}{{\mathcal T}}
\newcommand{\cU}{{\mathcal U}}
\newcommand{\cW}{{\mathcal W}}
\newcommand{\fa}{\large{{\mathfrak a}}}
\newcommand{\fX}{\mathscr{X}}
\newenvironment{customthm}[1]
  {\innercustomthm}{\endinnercustomthm}
\newenvironment{customcor}[1]
  {\innercustomcor}{\endinnercustomcor}
\newcommand*\circled[1]{\tikz[baseline=(char.base)]{%
            \node[shape=circle,draw,inner sep=2pt] (char) {#1};}}
\newenvironment{axioms}
 {\enumerate[label=\textbf{Axiom\,\arabic*.}, ref=Axiom\,\arabic*]}
 {\endenumerate}
\newcommand\varitem[1]{\item[\textbf{Axiom\,\arabic{enumi}\rlap{$#1$}.}]%
  \edef\@currentlabel{Axiom\,\arabic{enumi}{$#1$}}}
\providecommand{\AMS}{$\mathcal{A}$\kern-.1667em%
\lower.25em\hbox{$\mathcal{M}$}\kern-.125em$\mathcal{S}$}
\begin{document}
\title[Construction of $d$-abelian categories via derived categories]{Construction of $d$-abelian categories via derived categories}

\author{Peter J\o rgensen}

\address{Department of Mathematics, Aarhus University, Ny Munkegade 118, 8000 Aarhus C, Denmark}
\email{peter.jorgensen@math.au.dk}

\urladdr{https://sites.google.com/view/peterjorgensen}

\author{Emre Sen}

\address{Iowa City, IA, USA} 
\email{emresen641@gmail.com}

\urladdr{https://sites.google.com/view/emre-sen/home}


\keywords{Abelian category, Auslander algebra, derived category, functor category, hereditary category, higher Auslander algebra, higher cluster tilting module, higher homological algebra, $n$-angulated category, $n$-cluster tilting subcategory, $n$-hereditary algebra, $n$-representation finite algebra}

\subjclass[2020]{16E05, 16E10, 16G20}

%
%

\maketitle

\begin{abstract}
In this work, we provide a simple way to construct $d$-abelian categories via bounded derived categories for certain values of $d$.  Namely, let $\cC$ be an abelian category, and let $\cC[0,m]$ denote the full subcategory of the bounded derived category of $\cC$ whose objects $X$ satisfy that $H_*(X)$ is concentrated in degrees $j$ where $0 \leq j \leq m$. We prove that if $\cC$ is hereditary, then $\cC[0,m]$ is a $d$-abelian category where $d = 3m + 1$.

Beyond offering a uniform method for constructing $d$-abelian categories, this construction allows us to create $d$-abelian categories that exhibit some unexpected pro\-per\-ti\-es depending on the choice of the category $\cC$. For instance, if $\cC$ is the category of abelian groups, then $\cC[0,m]$ is a $d$-abelian category which is not $\mathbb{K}$-linear over a field $\mathbb{K}$ but has set indexed products and coproducts. Similarly, if $\cC$ is the category of coherent sheaves over certain algebraic curves, then $\cC[0,m]$ is a $d$-abelian category without enough injectives.

We extend our results to $(n+2)$-angulated categories. Namely, let $M$ be an $n$-cluster tilting object over an $n$-representation finite algebra and let $\cT$ be the corresponding $(n+2)$-angulated category with $n$-suspension functor $\Sigma_n$.  We prove that the full subcategory $\cT[0,m] = \add \bigoplus^{m}_{j=0}\Sigma^j_n M$ is a $d$-abelian category where $d = (n+2)(m+1)-2$. Furthermore, we show that there is a bijection between the functorially finite wide subcategories of $\add M$ and the functorially finite re\-pe\-ti\-ti\-ve wide subcategories of $\cT[0,m]$.
\end{abstract}
 
\tableofcontents 
 
\section{Introduction} 

In the past decade, $d$-abelian categories, introduced by Jasso in \cite[Def. 3.1]{Jas16}, have provided a framework for axiomatizing $d$-almost split sequences, a concept that naturally arises in higher Auslander--Reiten theory.  Such sequences  play an important role in various areas.  For instance, they were used to study Cohen--Macaulay modules over one dimensional hypersurface singularities in \cite{BIKR08} and rigid Cohen--Macaulay modules over certain rings in \cite{IY08}.  It turns out that $d$-cluster tilting subcategories, as defined by Iyama \cite{iyama2007auslander}, \cite{iyama2011clusterMAINpaper}, are examples of $d$-abelian categories as proved by Jasso in \cite[Thm 3.16]{Jas16}.

In this article, we present a new method for constructing $d$-abelian categories by utilizing full subcategories of derived categories. More precisely, this new approach builds on specific full subcategories of bounded derived categories, which we define in their most general form below.

\begin{definition}
\label{def truncated cat}
Let $\cC$ be an abelian category. For integers $i \leq j$, we define a full subcategory of $D^b(\cC)$, the bounded derived category of $\cC$, as 
\begin{align*}
  \cC[i,j] := \left\{ X \in D^b(\cC) \mid H_{\ast}(X) \text{ is concentrated in degrees } s \text{ with } i \leq s \leq j \right\}.
\end{align*} 
\end{definition}
We recall that $\cC$ is called \emph{hereditary} if $\Ext^i_{\cC}(X,Y) = 0$ for any $X, Y \in \cC$ when $i \geq 2$. We now present our first result.

\begin{customthm}{A}\label{thm hereditary implies d-abelian} Let $\cC$ be a hereditary abelian category. Then, $\cC[0,m]$ is a $(3m+1)$-abelian category. 
\end{customthm}

There are many examples of hereditary abelian categories, among them module categories of hereditary algebras, the category of abelian groups, and the category of sheaves on $\mathbb{P}^1$ or a weighted projective space.  By varying the underlying category $\cC$, we obtain the following results.

\begin{customcor}{B} 
\label{thm MAIN COR}
i) Let $\cC$ be the category of finite dimensional modules over a finite dimensional hereditary algebra. Then, $\cC[0,m]$ is a $(3m+1)$-abelian category.  If the algebra has infinite representation type, then $\cC[0,m]$ has infinitely many indecomposable objects.\vspace{0.1cm}\\
ii) Let $\cC$ be the category of abelian groups. Then, $\cC[0,m]$ is a $(3m+1)$-abelian category which is not $\mathbb{K}$-linear over a field $\mathbb{K}$ but has set indexed products and coproducts. It is $\mathbb{Z}$-linear and it does not have a Serre functor.\vspace{0.1cm}\\
iii) Let $\cC$ be the category of coherent sheaves over the projective line. Then, $\cC[0,m]$ is a $(3m+1)$-abelian category which does not have enough injectives.\vspace{0.1cm}\\
iv) Let $\cC'$ be a category derived equivalent to a hereditary abelian category $\cC$, that is, there is an equivalence of triangulated categories $\cF : D^b(\cC)\xrightarrow{} D^b(\cC')$. Then, $\cF(\cC[0,m])$ is a $(3m+1)$-abelian category.
\end{customcor}



 The study of truncated subcategories of derived categories,
as defined in \ref{def truncated cat}, was initiated in \cite[Def. 1.1, Thm. 1.2]{sen2023higher} for the case where $\cC$ is the module category of a Dynkin quiver, i.e., hereditary and of
finite representation type.  It was shown that the endomorphism algebra of $\cC[0,m]$ in $D^b(\cC)$ is a higher Auslander algebra with global dimension $3m+2$.

A natural question arises for the reverse implication in Theorem \ref{thm hereditary implies d-abelian}.  We prove:

\begin{customthm}{A$'$}\label{thm d-abelian implies hereditary A'}Assume that $m\geq 1$ and $\cC$ is a small abelian category with enough injectives or projectives. If $\cC[0,m]$ is $(3m+1)$-abelian then $\cC$ is a hereditary category.
\end{customthm}

%

Our results are not limited to hereditary abelian categories. We recall that a finite-dimensional $\mathbb{K}$-algebra \(A\) is called \(n\)-representation finite if it possesses an $n$-cluster tilting object and $\gldim A=n$ \cite{iyama2011clusterMAINpaper}. If $A$ is $n$-representation finite, then the \(n\)-cluster tilting object $M$ is unique and 
the subcategory
\[
M[n\mathbb{Z}] := \text{add} \{\, X[\ell n] \mid X \in \add M, \: \ell \in \mathbb{Z} \,\}
\]
becomes an \(n\)-cluster tilting subcategory of \(D^b(A)\). Moreover, it is an \((n+2)\)-angulated category \cite[Thm 1]{geiss2013n}, \cite[Thm 1.23]{iyama2011clusterMAINpaper}.

We prove the following.
\begin{customthm}{C}
\label{thm higher case B, rep finite}
Let $M$ be the $n$-cluster tilting object of $\modd A$ where $A$ is an $n$-representation finite algebra. Then, the full subcategory $\add \bigoplus^{m}_{j=0}M[jn]$ 
of $D^b(\modd A)$ is a $d$-abelian category where $d=(n+2)(m+1)-2$. 
\end{customthm}

Indeed, the previous result is a special case of the following more general set-up. 
\begin{definition}
\label{def truncated subcat in general}
Let $\cT$ be an additive category which is $(n+2)$-angulated with an $n$-suspension functor $\Sigma_n$.  Let $\{\fa_m\}_{m\in\Zz}$ be additive subcategories of $\cT$ satisfying:
\begin{enumerate}[label=\alph*)]
\item $\Hom_{ \cT }(\fa_m,\fa_n)=0$ for $n\notin \{m,m+1\}$,
\item If $t\in\cT$ then $t=a_s\oplus a_{s-1}\oplus\cdots\oplus a_i$ with $a_j\in \fa_j$,
\item $\fa_j=\Sigma^j_n\fa_0$.
\end{enumerate} 
Then, the category $\cT[i,j]:=\add\left(\fa_i,\fa_{i+1},\ldots,\fa_j\right)$ is called a \emph{truncated subcategory of} $\cT$.
\end{definition}


A typical example of a category $\cT$ satisfying all the conditions above is the $(n+2)$-angulated category arising from an $n$-cluster tilting object of an $n$-representation finite algebra. Truncated subcategories of such a category $\cT$ are studied in \cite{sen2023higher} and used to create higher Auslander algebras and new classes of higher representation finite algebras. We want to emphasize that in general truncated subcategories are not extended module categories in the sense of \cite[Def. 2.2]{Zho24}. We prove the following.

\begin{customthm}{D}
\label{thm higher case B most general} 
Let $\cT$ and $\{\fa_m\}_{m\in\mathbb{Z}}$ be as given in Definition \ref{def truncated subcat in general}. Assume further that $\fa_0$ has split idempotents and $\cT[0,m+1]$ is closed under $n$-extensions. Then the truncated subcategory $\cT[0,m]$ of $\cT$ is a $d$-abelian category where $d=(n+2)(m+1)-2$.
\end{customthm}

Our last result concerns certain wide subcategories of $\fa_0$, $\cT$ and $\cT[0,m]$.  Let $A$ be an $n$-representation finite algebra with $n$-cluster tilting object $M$.  Let $\cT = M[n\mathbb{Z}]$ and $\fa_0 = \add M$.  Let $\cW$ be a subcategory of $\fa_0$. We consider the following subcategories
\begin{enumerate}[label=\alph*)]
\item $\underline{\cW}=\add\left(\cW,\Sigma_n\cW,\cdots,\Sigma^m_n\cW\right)$ is a subcategory of $\cT[0,m]$, which we call a repetitive subcategory,
\item $\overline{\cW}:=\add\left(\Sigma^i_n\cW\vert i\in\mathbb{Z} \right)$.
\end{enumerate}

It was proved by Fedele in \cite[Thm. A]{Fe19} that there is a bijection between functorially finite wide subcategories of $\fa_0$ and functorially finite wide subcategories of $\cT$ which generalizes the result for hereditary categories shown by Br\"{u}ning \cite[Thm. 1.1]{Br07}.  We extend this to cover functorially finite repetitive wide subcategories of $\cT[0,m]$, hence obtaining the following result.

\begin{customthm}{E}\label{thm wide subcats} There are inverse bijections between 
\begin{enumerate}[label=\arabic*)]
\item functorially finite wide subcategories $\cW$ of $\fa_0$,
\item functorially finite repetitive wide subcategories $\underline{\cW}$ of $\cT[0,m]$,
\item functorially finite subcategories $\overline{\cW}$ of $\cT$.
\end{enumerate}
\end{customthm}

The paper is organised as follows: We discuss some background material in Section \ref{prelim}.  In Section \ref{section construction of d-abelian category} we give proofs of Theorems \ref{thm hereditary implies d-abelian}, \ref{thm higher case B, rep finite} and \ref{thm higher case B most general} and Corollary \ref{thm MAIN COR}.  In Sections \ref{section d-abelian to actual category} and \ref{section wide} we give proofs of Theorems \ref{thm d-abelian implies hereditary A'} and \ref{thm wide subcats}.  In section \ref{section questions}, we pose some questions.

\subsection{Acknowledgments} This project started at the Auslander Conference 2024. We would like to thank the organizers for their hospitality. This work was supported by a DNRF Chair from the Danish National Research Foundation (grant DNRF156), by a Research Project 2 from the Independent Research Fund Denmark (grant 1026-00050B), and by Aarhus University Research Foundation (grant AUFF-F-2020-7-16).

\section{Preliminaries}\label{prelim} 
\subsection{Higher Auslander--Reiten Theory}
\label{subsec:Higher}

$n$-cluster tilting objects were introduced by Iyama in \cite{iyama2007auslander} where they play a crucial role in higher Auslander--Reiten theory. Let $A$ be a finite dimensional algebra and $\modd A$ be the category of finitely generated $A$-modules. Following \cite{iyama2007auslander} and \cite{iyama2011clusterMAINpaper}, let $\cM$ be a subcategory of $\modd A$. $\cM$ is called $n$-rigid if $\ext^i_{A}(\cM,\cM)=0$ for any $0<i<n$. $\cM$ is called an $n$-cluster tilting subcategory if it is functorially finite and
\begin{align*}
  \cM &= \{\, X\in\modd A\vert \ext^i_{A}(X,\cM)=0 \mbox{ for } 0<i<n \,\} \\
      &=\{\, X\in\modd A\vert \ext^i_{A}(\cM,X)=0 \mbox{ for } 0<i<n \,\}.
\end{align*}
An additive generator of an $n$-cluster tilting subcategory is called an $n$-cluster tilting object.  An algebra $A$ is called $n$-representation finite if it has an $n$-cluster tilting object and the global dimension of $A$ is at most $n$. In this case the \(n\)-cluster tilting object \(M\) is unique and given by:
\[
M := \bigoplus_{j \geq 0} \tau_n^j(DA)
\]
where \(\tau_n := \tau \Omega^{n-1}\) denotes the higher Auslander--Reiten translation and $D=\Hom_{\mathbb{K}}(-,\mathbb{K})$. In this case, the subcategory
\[
M[n\mathbb{Z}] := \text{add} \{\, M[\ell n] \mid \ell \in \mathbb{Z} \,\}
\]
becomes an \(n\)-cluster tilting subcategory of \(D^b(A)\). Moreover, it is an \((n+2)\)-angulated category \cite{geiss2013n}, \cite{iyama2011clusterMAINpaper}.

%

\subsection{$d$-abelian categories}

\begin{definition}\cite[Def. 2.2]{Jas16}
Let $\mathcal{C}$ be an additive category and let $f_0: X_0 \to X_1$ be a morphism in $\mathcal{C}$. An $d$-cokernel of $f_0$ is a sequence
\[
(f_1, \ldots, f_d) : X_1 \to X_2 \to \cdots \to X_{d+1}
\]
such that for all $Y \in \mathcal{C}$, the induced sequence of abelian groups
\[
0 \to \mathcal{C}(X_{d+1}, Y) \to \mathcal{C}(X_d, Y) \to \cdots \to \mathcal{C}(X_1, Y) \to \mathcal{C}(X_0, Y)
\]
is exact. Equivalently, the sequence $(f_1, \ldots, f_d)$ is an $d$-cokernel of $f_0$ if for all $1 \leq k \leq d-1$, the morphism $f_k$ is a weak cokernel of $f_{k-1}$, and $f_d$ is moreover a cokernel of $f_{d-1}$. 

The concept of $d$-kernel of a morphism is defined dually.

A sequence
\[
  X_0 \xrightarrow{ f_0 } X_1 \xrightarrow{} \cdots \xrightarrow{} X_d \xrightarrow{ f_{ d+1 } } X_{d+1}
\]
is called $d$-exact if
\[
  X_1 \xrightarrow{} \cdots \xrightarrow{} X_d \xrightarrow{ f_d } X_{d+1}
\]
is a $d$-cokernel of $f_0$ and
\[
  X_0 \xrightarrow{ f_0 } X_1 \xrightarrow{} \cdots \xrightarrow{} X_d
\]
is a $d$-kernel of $f_d$.
\end{definition}

\begin{definition}\label{def d-abelian}\cite[Def. 3.1]{Jas16}
Let $d$ be a positive integer. A $d$-abelian category is an additive category $\mathcal{M}$ which satisfies the following axioms:
\begin{axioms}
\setcounter{enumi}{-1}
    \item \label{d-ab axiom 0} The category $\mathcal{M}$ is idempotent complete.
    \item \label{d-ab axiom 1} Every morphism in $\mathcal{M}$ has a $d$-kernel and a $d$-cokernel.
    \item \label{d-ab axiom 2} For every monomorphism $f_0: X_0 \to X_1$ in $\mathcal{M}$ and for every $d$-cokernel $(f_1, \ldots, f_n)$ of $f_0$, the following sequence is $d$-exact:
    \[
    X_0 \to X_1 \to \cdots \to X_d \to X_{d+1}
    \]
    
    \varitem{^{op}} \label{d-ab axiom 2 op} For every epimorphism $g_d: X_d \to X_{d+1}$ in $\mathcal{M}$ and for every $d$-kernel $(g_0, \ldots, g_{d-1})$ of $g_d$, the following sequence is $d$-exact:
    \[
    X_0 \to X_1 \to \cdots \to X_d \to X_{d+1}
    \]
\end{axioms}
\end{definition}

\begin{remark} By \cite[Thm. 3.16]{Jas16}, a $d$-cluster tilting subcategory of an abelian category is a $d$-abelian category. The reverse implication is proven in \cite[Thm. 3.20]{Jas16} and \cite[Thm. 7.3]{Kva22}.
\end{remark}

\section{Construction of $d$-abelian categories}\label{section construction of d-abelian category}
In this section, we provide proofs for Theorems \ref{thm hereditary implies d-abelian}, \ref{thm higher case B, rep finite}, and \ref{thm higher case B most general}. The organization of this section is as follows: It consists of five subsections, each beginning with a set-up assumed throughout that subsection.

In detail, we develop the necessary tools to prove Theorems \ref{thm higher case B, rep finite} and \ref{thm higher case B most general} in subsections \ref{subsect set up 1} and \ref{subsect set up 2 d-suspension}, and we prove them in subsection \ref{subsect set up 3 proof of thm C D}. From these results, we deduce Theorem \ref{thm hereditary implies d-abelian} in subsection \ref{subsect set up 4 proof of thm A}. Finally, in subsection \ref{subsect set up 5 proof of Corollary}, we prove Corollary \ref{thm MAIN COR}.

\subsection{Truncation}\label{subsect set up 1}
\begin{setup}\label{setup 1 page 1}
Let $\mathcal{T}$ be an additive category. Let $\{\fa_m\}_{m\in\Zz}$ be additive subcategories of $\cT$ satisfying:
\begin{enumerate}[label=\alph*)]
\item\label{set up 1 a} $\Hom_{ \cT }(\fa_m,\fa_n)=0$ for $n\notin \{m,m+1\}$.
\item\label{set up 1 b} If $t\in\cT$ then $t=a_s\oplus a_{s-1}\oplus\cdots\oplus a_i$ with $a_j\in \fa_j$. Note that $a_j$'s are written in descending order.
\end{enumerate}

We define the following subcategories of $\cT$:
\begin{align}
\label{3.1}
\cT_{\geq i}:=\{a_r\oplus a_{r-1}\oplus\cdots\oplus a_i\vert a_j\in \fa_j\}, \\
\label{3.2}
\cT_{\leq s}:=\{a_s\oplus a_{s-1}\oplus\cdots\oplus a_h\vert a_j\in \fa_j\}, \\
\label{3.3}
\cT_{[i,s]}:=\{a_s\oplus a_{s-1}\oplus\cdots a_{i+1}\oplus a_i\vert a_j\in \fa_j\}.
\end{align}
\end{setup}

\begin{remark} By the set-up a general morphism $\alpha$ in $\cT$ is given by an upper triangular matrix:
\begin{align*}
\begin{pmatrix}
\alpha_{r,r} & \alpha_{r,r-1} & &&\\
&\alpha_{r-1,r-1}&\alpha_{r-1,r-2}&&&\\
&&\alpha_{r-2,r-2}&\ddots&&\\
&&&\ddots&&\\
&&&&\alpha_{h+1,h+1}&\alpha_{h+1,h}\\
&&&&& \alpha_{h,h}
\end{pmatrix}
\end{align*}
Here $\alpha$ is a morphism $a_r\oplus a_{r-1}\oplus \cdots a_h\rightarrow a'_{r}\oplus a'_{r-1}\oplus \cdots a'_h$ where $r\geq h$, and some $a_j$ and $a'_j$ can be zero.
\end{remark}

\begin{lemma}\label{lemma 3 p3} In an additive category assume that $a_1\oplus a_0\cong b_1\oplus b_0$ while 
\begin{align}\label{cond p3}
  \Hom(a_1,b_0)=0, \Hom(b_1,a_0)=0.
\end{align} Then, $a_1\cong b_1$ and $a_0\cong b_0$.
\end{lemma}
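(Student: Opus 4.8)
The plan is to fix an isomorphism $\phi \colon a_1\oplus a_0 \to b_1\oplus b_0$ together with its inverse $\psi \colon b_1\oplus b_0 \to a_1\oplus a_0$, and to record both of them as $2\times 2$ matrices with respect to the two given biproduct decompositions. Writing the $(i,j)$-entry of $\phi$ as the composite of the inclusion of the $j$-th summand of the source, $\phi$, and the projection onto the $i$-th summand of the target, the entries of $\phi$ are morphisms $a_1\to b_1$, $a_0\to b_1$, $a_1\to b_0$, $a_0\to b_0$, and the entries of $\psi$ are morphisms $b_1\to a_1$, $b_0\to a_1$, $b_1\to a_0$, $b_0\to a_0$.

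Next I would use the two hypotheses in \eqref{cond p3} to triangularize \emph{both} matrices. The vanishing $\Hom(a_1,b_0)=0$ forces the entry $a_1\to b_0$ of $\phi$ to be zero, so
\[
\phi=\begin{pmatrix}\phi_{11}&\phi_{10}\\ 0&\phi_{00}\end{pmatrix},
\]
with $\phi_{11}\colon a_1\to b_1$ and $\phi_{00}\colon a_0\to b_0$ on the diagonal. Symmetrically, the vanishing $\Hom(b_1,a_0)=0$ forces the entry $b_1\to a_0$ of $\psi$ to be zero, so $\psi$ is also upper triangular with diagonal entries $\psi_{11}\colon b_1\to a_1$ and $\psi_{00}\colon b_0\to a_0$. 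I would then multiply the two triangular matrices: computing $\psi\phi=\mathrm{id}_{a_1\oplus a_0}$ yields $\psi_{11}\phi_{11}=\mathrm{id}_{a_1}$ and $\psi_{00}\phi_{00}=\mathrm{id}_{a_0}$ on the diagonal, while computing $\phi\psi=\mathrm{id}_{b_1\oplus b_0}$ yields $\phi_{11}\psi_{11}=\mathrm{id}_{b_1}$ and $\phi_{00}\psi_{00}=\mathrm{id}_{b_0}$. Hence $\phi_{11}$ and $\phi_{00}$ are two-sided invertible, giving $a_1\cong b_1$ and $a_0\cong b_0$.

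The only real point requiring care is that both conditions in \eqref{cond p3} must be used, and used on different matrices: one kills an off-diagonal entry of $\phi$, the other an off-diagonal entry of $\psi$. This is essential, since in a general additive category the inverse of an upper-triangular isomorphism need not itself be upper triangular, and the diagonal block $\phi_{11}$ of a merely triangular $\phi$ need not be an isomorphism; it is precisely the simultaneous triangularity of $\phi$ and $\psi$ that makes the diagonal entries mutually inverse. I do not anticipate any further obstacle, as the remaining matrix arithmetic is routine.
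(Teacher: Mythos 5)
Your proposal is correct and follows essentially the same route as the paper: write the isomorphism and its inverse as matrices, use $\Hom(a_1,b_0)=0$ to triangularize one and $\Hom(b_1,a_0)=0$ to triangularize the other, and read off from the two products that the diagonal entries are mutually inverse. Your remark that both vanishing conditions are needed, one for each matrix, is exactly the point the paper's proof relies on as well.
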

\begin{proof}
There are inverse isomorphisms 
\begin{align*}
\xymatrix{a_1\oplus a_0 \ar@<1ex>[rrr]^{ \begin{pmatrix} {\scriptstyle \alpha_{11} } & {\scriptstyle \alpha_{10} } \\ {\scriptstyle 0} & {\scriptstyle \alpha_{00} } \end{pmatrix}}&&& \ar@<1ex>[lll]^{\begin{pmatrix} {\scriptstyle \beta_{11} } & {\scriptstyle \beta_{10} }\\ {\scriptstyle 0} & {\scriptstyle \beta_{00} }\end{pmatrix}
} b_1\oplus b_0}
\end{align*}
which must be given by upper diagonal matrices by Equation \eqref{cond p3}.  Then,
\begin{align*}
\begin{pmatrix} \alpha_{11} &\alpha_{10}\\0&\alpha_{00}\end{pmatrix}\begin{pmatrix} \beta_{11} &\beta_{10}\\0&\beta_{00}\end{pmatrix}=\begin{pmatrix}
\alpha_{11}\beta_{11}&\alpha_{11}\beta_{10}+\alpha_{10}\beta_{00}\\
0&\alpha_{00}\beta_{00}
\end{pmatrix}=\begin{pmatrix}
\operatorname{id}&0\\0&\operatorname{id}
\end{pmatrix},\\[2mm]
\begin{pmatrix} \beta_{11} &\beta_{10}\\0&\beta_{00}\end{pmatrix}\begin{pmatrix} \alpha_{11} &\alpha_{10}\\0&\alpha_{00}\end{pmatrix}=\begin{pmatrix}
\beta_{11}\alpha_{11}&\beta_{11}\alpha_{10}+\beta_{10}\alpha_{00}\\
0&\beta_{00}\alpha_{00}
\end{pmatrix}=\begin{pmatrix}
\operatorname{id}&0\\0&\operatorname{id}
\end{pmatrix}
\end{align*}
so 
\begin{align*}
\xymatrix{a_0 \ar@<1ex>[r]^{ \alpha_{00}}& \ar@<1ex>[l]^{\beta_{00}} b_0},\quad
\xymatrix{a_1 \ar@<1ex>[r]^{ \alpha_{11}}& \ar@<1ex>[l]^{\beta_{11}} b_1}
\end{align*}
are inverse isomorphisms.
\end{proof}

\begin{lemma} \label{lemma 4 p4} Assume $a_q\oplus a_{q-1}\oplus\cdots\oplus a_g\cong b_q\oplus b_{q-1}\oplus\cdots\oplus b_g$ with $a_j,b_j\in \fa_j$ for each $j$. Then, $a_j\cong b_j$ for each $j$.
\end{lemma}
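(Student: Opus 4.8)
The plan is to proceed by induction on the number of summands $N = q - g + 1$, using Lemma \ref{lemma 3 p3} to peel off the extreme term at each step. The base case $N = 1$ is immediate, since the hypothesis then reads $a_g \cong b_g$; the case $N = 2$ is precisely Lemma \ref{lemma 3 p3} itself.

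For the inductive step, suppose $N \geq 2$ and regroup the given isomorphism as
\[
a_q \oplus (a_{q-1} \oplus \cdots \oplus a_g) \;\cong\; b_q \oplus (b_{q-1} \oplus \cdots \oplus b_g),
\]
setting $a_1 := a_q$, $a_0 := a_{q-1} \oplus \cdots \oplus a_g$ and similarly $b_1 := b_q$, $b_0 := b_{q-1} \oplus \cdots \oplus b_g$. To invoke Lemma \ref{lemma 3 p3} I must verify the orthogonality hypotheses $\Hom(a_1,b_0)=0$ and $\Hom(b_1,a_0)=0$. This is exactly where condition \ref{set up 1 a} does the work: $a_1 = a_q \in \fa_q$, while every summand of $b_0$ lies in some $\fa_j$ with $g \leq j \leq q-1$, and any such $j$ satisfies $j \notin \{q,q+1\}$, so $\Hom(\fa_q,\fa_j)=0$ and hence $\Hom(a_1,b_0)=0$. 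The vanishing $\Hom(b_1,a_0)=0$ follows by the identical argument with the roles of $a$ and $b$ interchanged. Lemma \ref{lemma 3 p3} then delivers both $a_q \cong b_q$ and the shorter isomorphism $a_{q-1} \oplus \cdots \oplus a_g \cong b_{q-1} \oplus \cdots \oplus b_g$.

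This shorter isomorphism involves $N-1$ summands over the matching index range $g,\ldots,q-1$, so the induction hypothesis applies and yields $a_j \cong b_j$ for every $g \leq j \leq q-1$. Together with $a_q \cong b_q$ this gives $a_j \cong b_j$ for all $j$, completing the induction.

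I do not anticipate a serious obstacle; the proof is a clean induction once the regrouping is chosen correctly. The only point genuinely requiring care is the verification that peeling off the \emph{extreme} term preserves the orthogonality needed by Lemma \ref{lemma 3 p3}, and this is precisely guaranteed because all remaining summands have index strictly below $q$ and therefore fall outside the two-step window $\{q,q+1\}$ allowed by \ref{set up 1 a}. One could equally peel off the bottom term $a_g$, as the symmetric computation $\Hom(\fa_j,\fa_g)=0$ for $j \geq g+1$ holds for the same reason.
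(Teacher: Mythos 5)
Your proof is correct and is essentially the paper's argument: both proceed by induction on the number of summands, peeling off one extreme term at each step via Lemma \ref{lemma 3 p3}, with the required orthogonality supplied by condition \ref{set up 1 a} of Set-up \ref{setup 1 page 1}. The only (cosmetic) difference is that you strip off the top term $a_q$ while the paper strips off the bottom term $a_g$ and inducts upward --- the mirror-image variant you yourself note in your closing remark.
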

\begin{proof}
We prove by induction on $j$ that 
\begin{align}\label{cond p4}
a_q\oplus\cdots \oplus a_{j+1}\cong b_{q}\oplus\cdots b_{j+1}\quad \text{and}\quad a_j\cong b_j.
\end{align}
Let $j=g$. By our assumptions $a_q\oplus a_{q-1}\oplus\cdots\oplus a_g\cong b_q\oplus b_{q-1}\oplus\cdots\oplus b_g$. By set-up \ref{setup 1 page 1}
\begin{align*}
\Hom_{ \cT }(a_q\oplus a_{q-1}\oplus\cdots\oplus a_{g+1},b_g)=0,\Hom_{ \cT }(b_q\oplus b_{q-1}\oplus\cdots\oplus b_{g+1},a_g)=0.
\end{align*}
Lemma \ref{lemma 3 p3} implies induction at level $g$. Assume induction at level $j$ holds. Then, $a_q\oplus a_{q-1}\oplus\cdots\oplus a_{j+1}\cong b_q\oplus b_{q-1}\oplus\cdots\oplus b_{j+1}$. By set-up \ref{setup 1 page 1},
\begin{align*}
\Hom_{ \cT }(a_q\oplus a_{q-1}\oplus\cdots\oplus a_{j+2},b_{j+1})=0,\Hom_{ \cT }(b_q\oplus b_{q-1}\oplus\cdots\oplus b_{j+2},a_{j+1})=0.
\end{align*}
Now Lemma \ref{lemma 3 p3} implies induction at level $j+1$.
\end{proof}

\begin{lemma}\label{lemma 5 p5} $\cT_{[i,s]}=\cT_{\geq i}\cap\cT_{\leq s}$ for $i\leq s$.
\end{lemma}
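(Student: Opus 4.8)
The statement asserts an equality of two (full) subcategories of $\cT$, so the plan is to prove the two inclusions separately. The inclusion $\cT_{[i,s]}\subseteq \cT_{\geq i}\cap\cT_{\leq s}$ is immediate from the definitions: an object $t=a_s\oplus a_{s-1}\oplus\cdots\oplus a_i$ with $a_j\in\fa_j$ is visibly of the form \eqref{3.1} (taking top index $r=s$) and of the form \eqref{3.2} (taking bottom index $h=i$), hence lies in both $\cT_{\geq i}$ and $\cT_{\leq s}$.

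The content is in the reverse inclusion $\cT_{\geq i}\cap\cT_{\leq s}\subseteq\cT_{[i,s]}$, and here the key input is the uniqueness of $\fa_\bullet$-decompositions furnished by Lemma \ref{lemma 4 p4}. So first I would take $t\in\cT_{\geq i}\cap\cT_{\leq s}$ and write down its two given decompositions, say $t\cong\bigoplus_{j=i}^{r}a_j$ coming from membership in $\cT_{\geq i}$ and $t\cong\bigoplus_{j=h}^{s}b_j$ coming from membership in $\cT_{\leq s}$, with $a_j,b_j\in\fa_j$. To compare these via Lemma \ref{lemma 4 p4} I would align their index ranges: set $g=\min(i,h)$ and $q=\max(r,s)$ and pad both decompositions to run over $g\le j\le q$, filling in the missing summands with the zero object, which lies in every $\fa_j$ since the $\fa_j$ are additive subcategories. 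Lemma \ref{lemma 4 p4} then yields $a_j\cong b_j$ for every $j$ in this common range.

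From this degree-wise isomorphism the conclusion falls out: for $j<i$ the summand $a_j$ was a padding zero, so $b_j\cong a_j=0$; symmetrically, for $j>s$ the summand $b_j$ was a padding zero, so $a_j\cong b_j=0$. Consequently $t\cong\bigoplus_{j=i}^{s}a_j$ with $a_j\in\fa_j$, which is exactly the form \eqref{3.3} defining $\cT_{[i,s]}$, and hence $t\in\cT_{[i,s]}$.

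I do not anticipate a genuine obstacle here: all the mathematical weight is carried by Lemma \ref{lemma 4 p4}, and the only point requiring care is the bookkeeping of index ranges, namely extending both decompositions to a common interval by zero summands before invoking uniqueness, so that the vanishing of the components outside $[i,s]$ can be read off degree by degree.
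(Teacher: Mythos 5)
Your proposal is correct and follows essentially the same route as the paper's own proof: the forward inclusion is read off from the definitions, and the reverse inclusion is obtained by padding both decompositions with zero summands to a common index range and invoking the uniqueness statement of Lemma \ref{lemma 4 p4} to force the components outside $[i,s]$ to vanish. No gaps.
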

\begin{proof}
The inclusion $\cT_{[i,s]}\subseteq\cT_{\geq i}\cap\cT_{\leq s}$ is immediate from Equations \eqref{3.1} through \eqref{3.3}. So we want to prove that $\cT_{[i,s]}\supseteq\cT_{\geq i}\cap\cT_{\leq s}$. If $t\in \cT_{\geq i}\cap\cT_{\leq s}$, then we have
\begin{align*}
t\cong a_r\oplus a_{r-1}\oplus \cdots a_i\quad \text{with}\quad a_j\in \fa_j,\\t\cong b_s\oplus b_{s-1}\oplus \cdots b_h\quad \text{with}\quad b_j\in \fa_j.
\end{align*}
Pick $q\geq\max \{r,s\}$ and $g\leq \min \{i,h\}$. 
\begin{enumerate}[label=\roman*)]
\item If $q>r$ then introduce $a_q,a_{q-1},\ldots,a_{r+1}=0$.
\item\label{p5 1} If $q>s$ then introduce $b_q,b_{q-1},\ldots,b_{s+1}=0$.
\item \label{p5 2} If $g<i$, then introduce $a_{i-1},a_{i-2},\ldots, a_{g}=0$.
\item If $g<h$, then introduce $b_{h-1},b_{h-2},\ldots,b_g=0$.
\end{enumerate}
Then we have
\begin{align*}
t\cong a_q\oplus a_{q-1}\oplus \cdots a_g\quad \text{with}\quad a_j\in \fa_j,\\t\cong b_q\oplus b_{q-1}\oplus \cdots b_g\quad \text{with}\quad b_j\in \fa_j,
\end{align*}
and Lemma \ref{lemma 4 p4} gives $a_j\cong b_j$ for each $j$. But then \ref{p5 1} implies $a_q,a_{q-1},\ldots,a_{s+1}=0$ and combining with \ref{p5 2} shows $t\cong a_s\oplus a_{s-1}\oplus\cdots\oplus a_i\in\cT_{[i,s]}$.
\end{proof}

\begin{lemma} \label{lemma 6 p6} Let $t\in\cT$ and $i\in\Zz$ be given. Write $t=a_q\oplus a_{q-1}\oplus\cdots\oplus a_{g+1}\oplus a_g$ with $a_j\in \fa_j$ and assume $q\geq i\geq g$ by setting any additional $a_j$ which are required equal to zero.\\
The canonical morphism 
\begin{align*}
\xymatrix{a_q\oplus\cdots\oplus a_i\ar[r]^-{\alpha}& a_q\oplus\cdots\oplus a_g=t}
\end{align*} is a strong $\cT_{\geq i}$-cover;
that is, $\Hom_{ \cT }( u,\alpha )$ is bijective for $u$ in $\cT_{ \geq i }$.  Hence the inclusion $\cT_{\geq i}\rightarrow \cT$ has a right adjoint given by $\tau_{\geq i}(a_q\oplus\cdots\oplus a_g)=a_q\oplus\cdots\oplus a_i$.
\end{lemma}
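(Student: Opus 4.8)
The plan is to reduce the whole statement to a single Hom-vanishing coming from condition \ref{set up 1 a} of Set-up \ref{setup 1 page 1}. First I would split the given decomposition of $t$ into two blocks according to whether the index is $\geq i$ or $< i$:
\[
t = \underbrace{(a_q \oplus \cdots \oplus a_i)}_{=:\,w} \;\oplus\; \underbrace{(a_{i-1} \oplus \cdots \oplus a_g)}_{=:\,w'},
\]
where $w \in \cT_{\geq i}$ and $\alpha$ is precisely the split inclusion of the summand $w$ into $t$. Under this identification, post-composition with $\alpha$ becomes the inclusion of the first factor
\[
\Hom_{\cT}(u,\alpha)\colon \Hom_{\cT}(u,w) \longrightarrow \Hom_{\cT}(u,w)\oplus\Hom_{\cT}(u,w') = \Hom_{\cT}(u,t), \qquad f \mapsto (f,0),
\]
which is always injective and is bijective if and only if $\Hom_{\cT}(u,w') = 0$.

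The second step is the Hom computation. For $u = c_p\oplus\cdots\oplus c_i \in \cT_{\geq i}$ with $c_j\in\fa_j$, I would expand $\Hom_{\cT}(u,w')$ additively into a sum of groups $\Hom_{\cT}(c_j,a_k)$ indexed by pairs with $j\geq i$ and $k\leq i-1$. For every such pair we have $k < i \leq j$, so $k \notin\{j,j+1\}$, and condition \ref{set up 1 a} forces $\Hom_{\cT}(c_j,a_k)=0$. Hence $\Hom_{\cT}(u,w')=0$, so $\Hom_{\cT}(u,\alpha)$ is bijective for every $u\in\cT_{\geq i}$; this is exactly the asserted strong $\cT_{\geq i}$-cover property.

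Finally, the adjunction is formal. The bijectivity of $\Hom_{\cT}(u,\alpha)$ for all $u\in\cT_{\geq i}$ says precisely that every morphism from an object of $\cT_{\geq i}$ to $t$ factors uniquely through $\alpha$, i.e. $\alpha\colon w\to t$ is a coreflection of $t$ along the full inclusion $\cT_{\geq i}\hookrightarrow\cT$. Since every object of $\cT$ admits such a coreflection, the inclusion has a right adjoint $\tau_{\geq i}$ with $\tau_{\geq i}t = w = a_q\oplus\cdots\oplus a_i$ and counit $\alpha$; functoriality of $\tau_{\geq i}$ and naturality of the adjunction isomorphism follow from the uniqueness in this universal property, while Lemma \ref{lemma 4 p4} guarantees that $w$ is well defined up to isomorphism independently of the chosen decomposition of $t$. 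I do not expect a genuine obstacle here: the only point requiring care is the bookkeeping of index ranges, namely checking that the ``below $i$'' block $w'$ is exactly the part annihilated by $\Hom_{\cT}(u,-)$ for $u\in\cT_{\geq i}$, which is where condition \ref{set up 1 a} does all the work.
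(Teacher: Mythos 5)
Your proof is correct and follows essentially the same route as the paper: identify $\alpha$ as the split inclusion of the summand $a_q\oplus\cdots\oplus a_i$ with complement $a_{i-1}\oplus\cdots\oplus a_g$, observe that this complement receives only zero morphisms from objects of $\cT_{\geq i}$ by condition \ref{set up 1 a} of Set-up \ref{setup 1 page 1}, and conclude bijectivity of $\Hom_{\cT}(u,\alpha)$; the paper leaves the formal passage to the right adjoint implicit, which you spell out, but there is no substantive difference.
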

\begin{proof}
$\alpha$ is the inclusion of a summand with complement $a_{i-1}\oplus a_{i-2}\oplus \cdots\oplus a_g$. But if $u\in\cT_{\geq i}$, then 
\begin{align*}
\Hom_{ \cT }(u,a_{i-1}\oplus a_{i-2}\oplus \cdots\oplus a_g)=0
\end{align*}
by set-up \ref{setup 1 page 1}. Hence $\Hom_{ \cT }(u,\alpha)$ is bijective.
\end{proof}

\begin{lemma} \label{lemma 7 p7} Let $t\in\cT$ and $s\in\Zz$ be given. Write $t=a_q\oplus a_{q-1}\oplus\cdots\oplus a_{g+1}\oplus a_g$ with $a_j\in \fa_j$ and assume $q\geq s\geq g$ by setting any additional $a_j$ which are required equal to zero.\\
The canonical morphism 
\begin{align*}
  t = a_q \oplus \cdots \oplus a_g \xrightarrow{ \beta } a_s \oplus \cdots \oplus a_g
\end{align*} is a strong $\cT_{\leq s}$-envelope; that is, $\Hom_{ \cT }( \beta,v )$ is bijective for $v$ in $\cT_{ \leq s }$.  Hence the inclusion $\cT_{\leq s}\rightarrow \cT$ has a left adjoint given by $\tau_{\leq s}(a_q\oplus\cdots\oplus a_g)=a_s \oplus \cdots \oplus a_g$.
\end{lemma}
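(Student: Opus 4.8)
The plan is to dualize the proof of Lemma \ref{lemma 6 p6}. First I would observe that, since $q \geq s$, the object $a_s \oplus \cdots \oplus a_g$ is a direct summand of $t = a_q \oplus \cdots \oplus a_g$ and that $\beta$ is exactly the canonical projection onto it, with complementary summand $a_q \oplus a_{q-1} \oplus \cdots \oplus a_{s+1}$.

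Next I would apply $\Hom_{\cT}(-, v)$ for a fixed $v \in \cT_{\leq s}$. Using the decomposition $t = (a_q \oplus \cdots \oplus a_{s+1}) \oplus (a_s \oplus \cdots \oplus a_g)$ one obtains
\[
\Hom_{\cT}(t, v) \cong \Hom_{\cT}(a_q \oplus \cdots \oplus a_{s+1}, v) \oplus \Hom_{\cT}(a_s \oplus \cdots \oplus a_g, v),
\]
under which precomposition with $\beta$, namely $\Hom_{\cT}(\beta, v)$, is the inclusion of the second summand. Consequently $\Hom_{\cT}(\beta, v)$ is bijective precisely when the first summand $\Hom_{\cT}(a_q \oplus \cdots \oplus a_{s+1}, v)$ vanishes.

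The key step is then to verify this vanishing using condition a) of set-up \ref{setup 1 page 1}. Each $a_j$ appearing in $a_q \oplus \cdots \oplus a_{s+1}$ lies in $\fa_j$ with $j \geq s+1$, whereas $v \in \cT_{\leq s}$ is a direct sum of objects of $\fa_h$ with $h \leq s$. Since $h \leq s < s+1 \leq j$ forces $h \notin \{j, j+1\}$, condition a) gives $\Hom_{\cT}(\fa_j, \fa_h) = 0$; summing over the relevant indices yields $\Hom_{\cT}(a_q \oplus \cdots \oplus a_{s+1}, v) = 0$ and hence the desired bijectivity, so that $\beta$ is a strong $\cT_{\leq s}$-envelope.

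Finally, the adjunction statement follows formally, exactly as in Lemma \ref{lemma 6 p6}: a strong $\cT_{\leq s}$-envelope exhibits $\tau_{\leq s}(t) = a_s \oplus \cdots \oplus a_g$ as the value on $t$ of a left adjoint to the inclusion $\cT_{\leq s} \hookrightarrow \cT$, with $\beta$ serving as the unit. I do not expect a genuine obstacle here, as the whole argument is the mirror image of Lemma \ref{lemma 6 p6}; the only point needing care is the bookkeeping of indices, in particular confirming the inequality $h < j$ that triggers the $\Hom$-vanishing.
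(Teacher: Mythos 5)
Your proof is correct and matches the paper's argument: the paper likewise identifies $\beta$ as the projection onto a summand with complement $a_q\oplus\cdots\oplus a_{s+1}$ and observes that $\Hom_{\cT}(a_q\oplus\cdots\oplus a_{s+1},v)=0$ for $v\in\cT_{\leq s}$ by condition a) of Set-up \ref{setup 1 page 1}, whence $\Hom_{\cT}(\beta,v)$ is bijective. Your index bookkeeping ($h\leq s<s+1\leq j$ forces $h\notin\{j,j+1\}$) is exactly the point the paper leaves implicit, and the adjunction conclusion is formal in both.
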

\begin{proof}
$\beta$ is the projection on a summand with complement $a_{q}\oplus a_{q-1}\oplus \cdots\oplus a_{s+1}$. But if $v\in\cT_{\leq s}$, then 
\begin{align*}
\Hom_{ \cT }(a_{q}\oplus a_{q-1}\oplus \cdots\oplus a_{s+1},v)=0
\end{align*}
by set-up \ref{setup 1 page 1}. Hence $\Hom_{ \cT }(\beta,v)$ is bijective.
\end{proof}

\subsection{Some useful $(d+2)$-angles}\label{subsect set up 2 d-suspension}
\begin{setup}\label{set up p 8}
In a $(d+2)$-angulated category with $d$-suspension $\Sigma_d$, we consider the $(d+2)$-angle
\begin{align*}
\xymatrix{a^0\oplus g^0\ar[r]^-{\partial^0}&a^1\oplus g^1\ar[r]^-{\partial^1}&a^2\oplus g^2\ar[r]^-{\partial^2}&\cdots\ar[r]^-{\partial^d}&a^{d+1}\oplus g^{d+1}\ar[r]^-{\partial^{d+1}}&\Sigma_da^{0}\oplus\Sigma_d g^0}
\end{align*}
where 
\begin{align*}
\partial^0=\begin{pmatrix}
\alpha^0 & 0\\ \varphi^0&\gamma^0 \end{pmatrix},
\partial^1=\begin{pmatrix}
\alpha^1 & 0\\ \varphi^1&\gamma^1\end{pmatrix},
\partial^2=\begin{pmatrix}
\alpha^2 & 0\\ \varphi^2&\gamma^2\end{pmatrix},
\ldots,\partial^d=\begin{pmatrix}
\alpha^d & 0\\ \varphi^d&\gamma^d\end{pmatrix} 
\end{align*}
and $\partial^{d+1}=\begin{pmatrix}
\alpha^{d+1}&0\\0&0
\end{pmatrix}$.\\
We assume
\begin{align}
\Hom(g^{i+1},a^i)=0 \text{ for }\, i=0,\ldots,d \quad \mbox{and} \quad \Hom(g^0,\Sigma^{-1}_da^{d+1})=0.
\end{align}
The angle can be visualized as follows:
\begin{align*}
\xymatrix{g^0\ar[r]^{\gamma^0}&g^1\ar[r]^{\gamma^1}&g^2\ar[r]^{\gamma^2}&\cdots\ar[r]^{\gamma^d}&g^{d+1}&\Sigma_dg^0\\
a^0\ar[r]^{\alpha^0}\ar[ur]^{\varphi^0}&a^1\ar[r]^{\alpha^1}\ar[ur]^{\varphi^1}&a^2\ar[r]^{\alpha^2}\ar[ur]^{\varphi^2}&\cdots\ar[r]^{\alpha^d}\ar[ur]^{\varphi^d}&a^{d+1}\ar[r]^{\alpha^{d+1}}&\Sigma_da^0}
\end{align*}

\end{setup}

\begin{lemma}\label{lemma 9 of P10} We have
\begin{enumerate}[label=\alph*)]
\item $\alpha^{i+1}\alpha^i=0$,
\item $\varphi^{i+1}\alpha^i+\gamma^{i+1}\varphi^i=0$,
\item $\gamma^{i+1}\gamma^i=0$\\
for $0\leq i\leq d-1$ and
\item $\alpha^{d+1}\alpha^d=0$
\item $(\Sigma_d\alpha^0)\alpha^{d+1}=0$
\item $(\Sigma_d\varphi^0)\alpha^{d+1}=0$
\end{enumerate}
\end{lemma}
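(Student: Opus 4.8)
The plan is to obtain all six identities from a single input: in a $(d+2)$-angulated category the composite of any two consecutive morphisms of a $(d+2)$-angle vanishes. Everything beyond that is matrix bookkeeping against the triangular forms of the $\partial^i$ recorded in Set-up \ref{set up p 8}.

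First I would note that composition-vanishing applies to the $d+1$ consecutive pairs $\partial^{i+1}\partial^i$ with $0\le i\le d$ inside the given angle, and that, by the rotation axiom, the angle extends one step to the right by $\Sigma_d\partial^0$, so that $(\Sigma_d\partial^0)\,\partial^{d+1}=0$ as well (the sign introduced by rotation is irrelevant, since we only need the composite to be zero). Because $\Sigma_d$ is an additive functor, it carries the biproduct decomposition of $\partial^0$ to the corresponding decomposition of $\Sigma_d\partial^0$, so
\[ \Sigma_d\partial^0=\begin{pmatrix}\Sigma_d\alpha^0 & 0\\ \Sigma_d\varphi^0 & \Sigma_d\gamma^0\end{pmatrix} \]
has the same triangular shape as the other $\partial^i$, its top-right entry being $\Sigma_d 0=0$.

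Then the six identities drop out entrywise. For $0\le i\le d-1$ both factors have the generic form of the set-up, and
\[ \partial^{i+1}\partial^i=\begin{pmatrix}\alpha^{i+1}\alpha^i & 0\\ \varphi^{i+1}\alpha^i+\gamma^{i+1}\varphi^i & \gamma^{i+1}\gamma^i\end{pmatrix}=0 \]
yields (a), (b), (c) from its three potentially nonzero entries. For $i=d$ one uses $\partial^{d+1}=\left(\begin{smallmatrix}\alpha^{d+1}&0\\0&0\end{smallmatrix}\right)$; its product with $\partial^d$ has top-left entry $\alpha^{d+1}\alpha^d$ and all other entries automatically zero, giving (d). Finally, the wrap-around composite
\[ (\Sigma_d\partial^0)\,\partial^{d+1}=\begin{pmatrix}(\Sigma_d\alpha^0)\alpha^{d+1} & 0\\ (\Sigma_d\varphi^0)\alpha^{d+1} & 0\end{pmatrix}=0 \]
yields (e) and (f).

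The calculation is routine; the only genuine point of substance is the wrap-around relation $(\Sigma_d\partial^0)\partial^{d+1}=0$, since it is precisely here that the $(d+2)$-angulated structure, rather than the mere existence of a complex, is invoked. I would justify it by appealing to the rotation axiom, so that $\Sigma_d\partial^0$ really is the successor of $\partial^{d+1}$ in a genuine $(d+2)$-angle, together with the composition-vanishing property; if one wants composition-vanishing itself spelled out, it follows from the exactness of the sequence obtained by applying a representable functor $\Hom_{\cT}(W,-)$ to the $(d+2)$-angle. Note that the Hom-vanishing hypotheses of the set-up are \emph{not} used in this lemma: they serve only to force the $\partial^i$ into triangular shape in the first place, and that shape is already granted here.
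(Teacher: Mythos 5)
Your proposal is correct and follows essentially the same route as the paper: compose consecutive differentials of the $(d+2)$-angle (including the rotated composite $(\Sigma_d\partial^0)\partial^{d+1}$, where the sign from rotation is immaterial) and read off the six identities entrywise from the triangular matrix forms. The paper's proof is exactly this computation, with the rotated morphism written as $\pm\Sigma_d\partial^0$.
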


\begin{proof}
Composing consecutive differentials in the angle \ref{set up p 8} gives zero, so 
\begin{align*}
\begin{pmatrix}
\alpha^{i+1}&0\\\varphi^{i+1}&\gamma^{i+1}
\end{pmatrix}\begin{pmatrix}
\alpha^{i}&0\\\varphi^{i}&\gamma^{i}
\end{pmatrix}=\begin{pmatrix}
\alpha^{i+1}\alpha^{i}&0\\\varphi^{i+1}\alpha^i+\gamma^{i+1}\varphi^i&\gamma^{i+1}\gamma^{i}
\end{pmatrix}
\end{align*}
is zero for $0\leq i\leq d-1$, and
\begin{align*}
\begin{pmatrix}
\alpha^{d+1}&0\\0&0
\end{pmatrix}\begin{pmatrix}
\alpha^{d}&0\\\varphi^{d}&\gamma^{d}
\end{pmatrix}=\begin{pmatrix}
\alpha^{d+1}\alpha^d&0\\0&0
\end{pmatrix}
\end{align*}
is zero, and 
\begin{align*}
\pm\begin{pmatrix}
\Sigma_d\alpha^{0}&0\\\Sigma_d\varphi^0&\Sigma_d\gamma^0
\end{pmatrix}\begin{pmatrix}
\alpha^{d+1}&0\\0&0
\end{pmatrix}=\pm\begin{pmatrix}
(\Sigma_d\alpha^0)\alpha^{d+1}\\(\Sigma_d\varphi^{0})\alpha^{d+1}
\end{pmatrix}
\end{align*}
gives zero.
\end{proof}

\begin{lemma} \label{lemma 10 of P11} There exist morphisms 
\begin{align}
\xymatrix{g^{i+1}\ar[r]^{\delta^{i+1}}&g^i} 
\end{align}
for $0\leq i\leq d$ such that
\begin{enumerate}[label=\alph*)]
\item $\gamma^d\delta^{d+1}=\operatorname{id}_{g^{d+1}}$
\item $\gamma^i\delta^{i+1}+\delta^{i+2}\gamma^{i+1}=\operatorname{id}_{g^{i+1}}$ for $0\leq i\leq d-1$
\item $\delta^1\gamma^0=\operatorname{id}_{g^0}$.
\end{enumerate}
\end{lemma}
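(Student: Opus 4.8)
The plan is to observe that conditions (a), (b), (c) assert precisely that the family $(\delta^{i+1})$ is a contracting homotopy of the complex $g^\bullet\colon g^0\xrightarrow{\gamma^0}g^1\xrightarrow{\gamma^1}\cdots\xrightarrow{\gamma^d}g^{d+1}$, which is indeed a complex by Lemma~\ref{lemma 9 of P10}(c). I would build this homotopy by descending induction, reading the maps off the long exact sequences attached to the $(d+2)$-angle of Set-up~\ref{set up p 8}. Throughout, write $X^i:=a^i\oplus g^i$; applying $\Hom_\cT(Y,-)$ to the angle yields a long exact sequence, so that the complex $\Hom_\cT(Y,X^\bullet)$ is acyclic for every $Y$. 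Recall also that $\partial^i=\left(\begin{smallmatrix}\alpha^i&0\\\varphi^i&\gamma^i\end{smallmatrix}\right)$ is lower triangular, while $\partial^{d+1}=\left(\begin{smallmatrix}\alpha^{d+1}&0\\0&0\end{smallmatrix}\right)$.

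First I would treat the top relation (a). Since the second column of $\partial^{d+1}$ vanishes, $\left(\begin{smallmatrix}0\\\operatorname{id}\end{smallmatrix}\right)\colon g^{d+1}\to X^{d+1}$ is a cocycle, so exactness of $\Hom_\cT(g^{d+1},X^\bullet)$ lifts it to some $\psi\colon g^{d+1}\to X^d$ with $\partial^d\psi=\left(\begin{smallmatrix}0\\\operatorname{id}\end{smallmatrix}\right)$. The $a^d$-component of $\psi$ lies in $\Hom_\cT(g^{d+1},a^d)=0$, hence $\psi=\left(\begin{smallmatrix}0\\\delta^{d+1}\end{smallmatrix}\right)$ and $\gamma^d\delta^{d+1}=\operatorname{id}_{g^{d+1}}$.

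For the descending step, assuming $\delta^{k+2},\dots,\delta^{d+1}$ already satisfy the higher relations, I would set $\theta^{k+1}:=\operatorname{id}_{g^{k+1}}-\delta^{k+2}\gamma^{k+1}$ and first check $\gamma^{k+1}\theta^{k+1}=0$: substituting the level-$(k+2)$ identity $\gamma^{k+1}\delta^{k+2}=\operatorname{id}-\delta^{k+3}\gamma^{k+2}$ and using $\gamma^{k+2}\gamma^{k+1}=0$ from Lemma~\ref{lemma 9 of P10}(c) makes the terms cancel. This vanishing says exactly that $\left(\begin{smallmatrix}0\\\theta^{k+1}\end{smallmatrix}\right)\colon g^{k+1}\to X^{k+1}$ is a cocycle; lifting it through $\partial^k$ and again discarding the $a^k$-component via $\Hom_\cT(g^{k+1},a^k)=0$ yields $\delta^{k+1}\colon g^{k+1}\to g^k$ with $\gamma^k\delta^{k+1}=\theta^{k+1}$, which is relation (b) at level $k$. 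Running this for $k=d-1,\dots,0$ produces $\delta^d,\dots,\delta^1$.

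The hard part will be relation (c), which the induction does not hand over directly and which must be forced separately from the structure of the angle. With $\theta^0:=\operatorname{id}_{g^0}-\delta^1\gamma^0$, relation (b) at level $0$ together with $\gamma^1\gamma^0=0$ gives $\gamma^0\theta^0=0$, so $\left(\begin{smallmatrix}0\\\theta^0\end{smallmatrix}\right)$ is a cocycle at the left-hand end of the $\Hom_\cT(g^0,-)$-sequence. Exactness there (which uses the rotated angle) exhibits it as a coboundary arising from $\Sigma_d^{-1}X^{d+1}$ along $\Sigma_d^{-1}\partial^{d+1}$, whose second row vanishes; comparing $g^0$-components forces $\theta^0=0$, that is (c). Here the boundary hypothesis $\Hom_\cT(g^0,\Sigma_d^{-1}a^{d+1})=0$ is exactly the wrap-around instance of $\Hom_\cT(g^{i+1},a^i)=0$ and controls this last lift just as in the interior steps. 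The main risk is purely bookkeeping: keeping the two ends of the periodic long exact sequence and the signs of the rotation straight, and applying Lemma~\ref{lemma 9 of P10} at the correct level in each cancellation.
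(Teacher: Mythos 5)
Your proposal is correct and follows essentially the same route as the paper's proof: lift $\bigl(\begin{smallmatrix}0\\1\end{smallmatrix}\bigr)$ and then $\bigl(\begin{smallmatrix}0\\1-\delta^{i+2}\gamma^{i+1}\end{smallmatrix}\bigr)$ through the differentials by descending induction using exactness of $\Hom_{\cT}(g^{i+1},-)$ on the angle, kill the $a$-components via $\Hom(g^{i+1},a^i)=0$, and obtain (c) from the wrap-around lift through $\Sigma_d^{-1}\partial^{d+1}$, whose vanishing second row forces $\operatorname{id}_{g^0}-\delta^1\gamma^0=0$.
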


\begin{proof}
\begin{enumerate}[label=\alph*)]
\item We can lift as follows.
\begin{align*}
\xymatrix@R=-1.75pc @C=0pc @! { && g^{d+1}\ar[dd]|{\tiny\begin{pmatrix}
0\\1
\end{pmatrix}}\ar[ddrr]^-{\tiny\begin{matrix}0\end{matrix}}\ar@{.>}[ddll]_-{\tiny\begin{pmatrix}
0\\\delta^{d+1}
\end{pmatrix}} && \\\\
a^d\oplus g^d\ar[rr]_-{\tiny\begin{pmatrix}
\alpha^{d}&0\\\varphi^{d}&\gamma^{d}
\end{pmatrix}}&&a^{d+1}\oplus g^{d+1}\ar[rr]_-{\tiny\begin{pmatrix}
\alpha^{d+1}&0\\0&0
\end{pmatrix}}&&\Sigma_d a^0\oplus\Sigma_d g^0}
\end{align*}
Since $\Hom(g^{d+1},a^d)=0$, the first component of $\begin{pmatrix}
0\\\delta^{d+1}
\end{pmatrix}$ must be zero. Hence $\gamma^d\delta^{d+1}=\operatorname{id}_{g^{d+1}}$.
\item For $i=d-1$, we can lift as follows.
\begin{align*}
\xymatrix@R=-1.75pc @C=1pc @!{ && g^{d}\ar[dd]|-{\tiny\begin{pmatrix}
0\\1-\delta^{d+1}\gamma^d
\end{pmatrix}}\ar[ddrr]^-{\tiny\begin{matrix}0\end{matrix}}\ar@{.>}[ddll]_{\tiny\begin{pmatrix}
0\\\delta^{d}
\end{pmatrix}} && \\\\
a^{d-1}\oplus g^{d-1}\ar[rr]_-{\tiny\begin{pmatrix}
\alpha^{d-1}&0\\\varphi^{d-1}&\gamma^{d-1}
\end{pmatrix}}&&a^{d}\oplus g^{d}\ar[rr]_-{\tiny\begin{pmatrix}
\alpha^{d}&0\\\varphi^d&\gamma^d
\end{pmatrix}}&&a^{d+1}\oplus g^{d+1}}
\end{align*}
Since $\Hom(g^d,a^{d-1})=0$, the first component of $\begin{pmatrix}
0\\\delta^d
\end{pmatrix}$ must be zero.  We used that
\begin{align*}
\begin{pmatrix}
\alpha^d&0\\\varphi^d&\gamma^d
\end{pmatrix}\begin{pmatrix}
0\\1-\delta^{d+1}\gamma^{d}
\end{pmatrix}=\begin{pmatrix}
0\\ \gamma^d-\gamma^d\delta^{d+1}\gamma^d
\end{pmatrix}=\begin{pmatrix}
0\\\gamma^d-\gamma^d
\end{pmatrix}=0,
\end{align*}
since $\gamma^d\delta^{d+1}=\operatorname{id}_{g^{d+1}}$ by the previous case. For $0\leq i\leq d-2$, we can lift as follows by descending induction.
\begin{align*}
\xymatrix@R=-1.75pc @C=1pc @!{ && g^{i+1}\ar[dd]|-{\tiny\begin{pmatrix}
0\\1-\delta^{i+2}\gamma^{i+1}
\end{pmatrix}}\ar[ddrr]^-{\tiny\begin{matrix}0\end{matrix}}\ar@{.>}[ddll]_-{\tiny\begin{pmatrix}
0\\\delta^{i+1}
\end{pmatrix}} && \\\\
a^{i}\oplus g^{i}\ar[rr]_-{\tiny\begin{pmatrix}
\alpha^{i}&0\\\varphi^{i}&\gamma^{i}
\end{pmatrix}}&&a^{i+1}\oplus g^{i+1}\ar[rr]_-{\tiny\begin{pmatrix}
\alpha^{i+1}&0\\\varphi^{i+1}&\gamma^{i+1}
\end{pmatrix}}&&a^{i+2}\oplus g^{i+2}}.
\end{align*}
We used that the first component of $\begin{pmatrix} 0 \\ \delta^{ i+1 } \end{pmatrix}$ is zero by $\Hom(g^{i+1},a^i)=0$ and
\begin{align*}
\begin{pmatrix}
\alpha^{i+1}&0\\\varphi^{i+1}&\gamma^{i+1}
\end{pmatrix}\begin{pmatrix}
0\\1-\delta^{i+2}\gamma^{i+1}
\end{pmatrix}&=\begin{pmatrix}
0\\ \gamma^{i+1}-(\gamma^{i+1}\delta^{i+2})\gamma^{i+1}
\end{pmatrix}\\&=\begin{pmatrix}
0\\\gamma^{i+1}-(1-\delta^{i+3}\gamma^{i+2})\gamma^{i+1} 
\end{pmatrix}\text{by induction}&\\ &=\begin{pmatrix}
0\\\gamma^{i+1}-\gamma^{i+1}
\end{pmatrix} \text{by Lemma}\, \ref{lemma 9 of P10}(c)\\
&=0.
\end{align*}
\item We can lift as follows.
\begin{align*}
\xymatrix@R=-4.5pc @C=-3pc @!{ &&& g^{0}\ar[dd]|-{\tiny\begin{pmatrix}
0\\1-\delta^{1}\gamma^{0}
\end{pmatrix}}\ar[ddrr]^-{\tiny\begin{matrix}0\end{matrix}}\ar@{.>}[ddlll]_-{\tiny\begin{pmatrix}
0\\\ast
\end{pmatrix}} && \\\\
\Sigma^{-1}_da^{d+1}\oplus\Sigma^{-1}_d g^{d+1}\ar[rrr]_-{\tiny\begin{pmatrix}
\pm\Sigma^{-1}_d\alpha^{d+1}&0\\0&0
\end{pmatrix}}&&&a^{0}\oplus g^{0}\ar[rr]_-{\tiny\begin{pmatrix}
\alpha^{0}&0\\\varphi^{0}&\gamma^{0}
\end{pmatrix}}&&a^{1}\oplus g^{1}}
\end{align*}
where we used that the first component of $\begin{pmatrix} 0 \\ \ast \end{pmatrix}$ is zero by $\Hom(g^{0},\Sigma^{-1}_da^{d+1})=0$ and
\begin{align*}
\begin{pmatrix}
\alpha^{0}&0\\\varphi^{0}&\gamma^{0}
\end{pmatrix}\begin{pmatrix}
0\\1-\delta^{1}\gamma^{0}
\end{pmatrix}&=\begin{pmatrix}
0\\ \gamma^{0}-(\gamma^{0}\delta^{1})\gamma^{0}
\end{pmatrix}\\&=\begin{pmatrix}
0\\\gamma^{0}-(1-\delta^{2}\gamma^{1})\gamma^{0}) 
\end{pmatrix}\text{by induction}&\\ &=\begin{pmatrix}
0\\\gamma^{0}-\gamma^{0}
\end{pmatrix} \text{by Lemma}\, \ref{lemma 9 of P10}(c)\\
&=0. \qedhere
\end{align*}
\end{enumerate}
\end{proof}

\begin{lemma}\label{lemma 11 P14} There exists the following biproduct diagram in the category of $(d+2)$-$\Sigma_d$-sequences. 
\begin{align*}
\xymatrixcolsep{1.3pc}\xymatrixrowsep{4pc}\xymatrix{g^0\ar@<1ex>[dd]^-{\tiny\begin{pmatrix} 0\\1\end{pmatrix}}\ar[rr]^-{\gamma^{0}}_*+[o][F-][d]{1}&&g^1\ar@<1ex>[dd]^-{\tiny\begin{pmatrix} 0\\1\end{pmatrix}}\ar[rr]^-{\gamma^{1}}&&\cdots\ar[rr]^-{\gamma^{d-1}}&&g^d\ar@<1ex>[dd]^-{\tiny\begin{pmatrix} 0\\1\end{pmatrix}}\ar[rr]^-{\gamma^{d}}_*+[o][F-][d]{2}&&g^{d+1}\ar@<1ex>[dd]^-{\tiny\begin{pmatrix} 0\\1\end{pmatrix}}\ar[rr]^0_*+[o][F-][d]{3}&&\Sigma_dg^0\ar@<1ex>[dd]^-{\tiny\begin{pmatrix} 0\\1\end{pmatrix}}\\\\
a^{0}\oplus g^{0}\ar@<1ex>[dd]^-{\tiny\begin{pmatrix} 1\,\,0\end{pmatrix}}\ar@<1ex>[uu]^-{\tiny\begin{pmatrix}
\delta^{1}\varphi^{0} \,\, 1
\end{pmatrix}}\ar[rr]^-{\tiny\begin{pmatrix}
\alpha^{0} & 0\\\varphi^{0}&\gamma^{0}
\end{pmatrix}}&&a^{1}\oplus g^{1}\ar[rr]^-{\tiny\begin{pmatrix}
\alpha^{1} & 0\\\varphi^{1}&\gamma^{1}
\end{pmatrix}}\ar@<1ex>[dd]^-{\tiny\begin{pmatrix} 1\,\,0\end{pmatrix}}\ar@<1ex>[uu]^-{\tiny\begin{pmatrix}
\delta^{2}\varphi^{1} \,\, 1 \end{pmatrix}}&&\cdots\ar[rr]^-{\tiny\begin{pmatrix}
\alpha^{d-1} & 0\\\varphi^{d-1}&\gamma^{d-1}
\end{pmatrix}}&&a^{d}\oplus g^{d}\ar[rr]^-{\tiny\begin{pmatrix}
\alpha^{d} & 0\\\varphi^{d}&\gamma^{d}
\end{pmatrix}}\ar@<1ex>[dd]^-{\tiny\begin{pmatrix} 1\,\,0\end{pmatrix}}\ar@<1ex>[uu]^-{\tiny\begin{pmatrix}\delta^{d+1}\varphi^{d}\,\,1\end{pmatrix}}&&a^{d+1}\oplus g^{d+1}\ar[rr]^{\tiny\begin{pmatrix}
\alpha^{d+1} & 0\\ 0&0
\end{pmatrix}}\ar@<1ex>[dd]^-{\tiny\begin{pmatrix} 1\,\,0\end{pmatrix}}\ar@<1ex>[uu]^-{\tiny\begin{pmatrix}0\,\,1\end{pmatrix}}&&\Sigma_da^{0}\oplus\Sigma_d g^{0}\ar@<1ex>[dd]^-{\tiny\begin{pmatrix} 1\,\,0\end{pmatrix}}\ar@<1ex>[uu]^-{\tiny\begin{pmatrix}\Sigma_d(\delta^{1}\varphi^{0})\,\,1\end{pmatrix}}\\\\
a^{0}\ar[rr]^*+[o][F-][d]{4}_{\alpha^0}\ar@<1ex>[uu]^-{\tiny\begin{pmatrix}1\\ -\delta^{1}\varphi^{0}\end{pmatrix}}&&a^{1}\ar[rr]_-{\alpha^1}\ar@<1ex>[uu]^-{\tiny\begin{pmatrix}1\\ -\delta^{2}\varphi^{1}\end{pmatrix}}&&\cdots\ar[rr]_-{\alpha^{d-1}}&&a^{d}\ar[rr]^*+[o][F-][d]{5}_-{\alpha^{d}}\ar@<1ex>[uu]^-{\tiny\begin{pmatrix}1\\ -\delta^{d+1}\varphi^{d}\end{pmatrix}}&&a^{d+1}\ar[rr]^*+[o][F-][d]{6}_-{\alpha^{d+1}}\ar@<1ex>[uu]^-{\tiny\begin{pmatrix}1\\ 0\end{pmatrix}}&&\Sigma_da^0\ar@<1ex>[uu]^-{\tiny\begin{pmatrix}1\\-\Sigma_d(\delta^{1}\varphi^{0})\end{pmatrix}}}
\end{align*}
Consequently, there are the following $(d+2)$-angles.
\begin{align*}
\xymatrix{a^{0}\ar[rr]^{\alpha^{0}}&&a^{1}\ar[rr]^{\alpha^{1}}&&\cdots\ar[rr]^{\alpha^{d-1}}&&a^{d}\ar[rr]^{\alpha^{d}}&&a^{d+1}\ar[r]^{\alpha^{d+1}}&\Sigma_da^0\\g^0\ar[rr]^{\gamma^{0}}&&g^1\ar[rr]^{\gamma^{1}}&&\cdots\ar[rr]^{\gamma^{d-1}}&&g^d\ar[rr]^{\gamma^{d}}&&g^{d+1}\ar[r]^0&\Sigma_dg^0}
\end{align*}
\end{lemma}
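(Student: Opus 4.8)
The plan is to regard the three rows of the displayed diagram as objects of the additive category $\cS$ of $(d+2)$-$\Sigma_d$-sequences: the middle row is the given $(d+2)$-angle, which I call $X$; the top row is the sequence $G$ with terms $g^i$, differentials $\gamma^i$ and final map $0$; and the bottom row is the sequence $A$ with terms $a^i$ and differentials $\alpha^i$. I would verify that the four families of vertical maps in the diagram --- write $\iota_A,\pi_A$ for the bottom-to-middle and middle-to-bottom maps and $\iota_G,\pi_G$ for the top-to-middle and middle-to-top maps --- are morphisms in $\cS$ that exhibit $X$ as a biproduct $A\oplus G$. Granting this, the final assertion is immediate: $X$ is a $(d+2)$-angle, and since the class of $(d+2)$-angles is closed under passage to direct summands (a standard property of $(n+2)$-angulated categories, available here as idempotents split), both $A$ and $G$ are $(d+2)$-angles.

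First I would record the pointwise biproduct identities. At an interior spot $a^i\oplus g^i$ these are the one-line matrix computations
\begin{align*}
\pi_A\iota_A=\begin{pmatrix}1&0\end{pmatrix}\begin{pmatrix}1\\-\delta^{i+1}\varphi^i\end{pmatrix}=\operatorname{id},\qquad
\pi_G\iota_G=\begin{pmatrix}\delta^{i+1}\varphi^i&1\end{pmatrix}\begin{pmatrix}0\\1\end{pmatrix}=\operatorname{id},
\end{align*}
together with $\pi_A\iota_G=0$, the cancellation $\pi_G\iota_A=\delta^{i+1}\varphi^i-\delta^{i+1}\varphi^i=0$, and the completeness relation $\iota_A\pi_A+\iota_G\pi_G=\operatorname{id}_{a^i\oplus g^i}$. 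The same identities hold, with the degenerate entries shown in the diagram, in the two rightmost columns.

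The substantive step is to check that $\iota_A,\pi_A,\iota_G,\pi_G$ commute with the horizontal differentials, so that they are genuine morphisms of $\cS$. The squares for $\pi_A=\begin{pmatrix}1&0\end{pmatrix}$ and $\iota_G=\begin{pmatrix}0\\1\end{pmatrix}$ commute automatically because each $\partial^i$ is lower triangular. For $\iota_A$ and $\pi_G$ the interior squares ($0\le i\le d-1$) reduce, after expanding the matrix products and using Lemma \ref{lemma 10 of P11}(b) to substitute $\gamma^i\delta^{i+1}=\operatorname{id}-\delta^{i+2}\gamma^{i+1}$, to the single identity $\delta^{i+2}(\gamma^{i+1}\varphi^i+\varphi^{i+1}\alpha^i)=0$, which holds by Lemma \ref{lemma 9 of P10}(b). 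The square feeding into $a^{d+1}\oplus g^{d+1}$ instead uses $\gamma^d\delta^{d+1}=\operatorname{id}$ from Lemma \ref{lemma 10 of P11}(a), and the final $\Sigma_d$-twisted square uses Lemma \ref{lemma 9 of P10}(f) to see that $\Sigma_d(\delta^1\varphi^0)\alpha^{d+1}=(\Sigma_d\delta^1)(\Sigma_d\varphi^0)\alpha^{d+1}=0$. I expect these boundary columns, where the formulas degenerate and the suspension intervenes, to be the only place demanding genuine care; the interior computations are uniform.

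Once all four families are seen to be morphisms of $\cS$ satisfying the biproduct relations above, $X\cong A\oplus G$ in $\cS$, which is exactly the asserted biproduct diagram. Because the middle row is a $(d+2)$-angle and direct summands of $(d+2)$-angles are again $(d+2)$-angles, the bottom row $a^0\to\cdots\to a^{d+1}\to\Sigma_d a^0$ and the top row $g^0\to\cdots\to g^{d+1}\to\Sigma_d g^0$ are $(d+2)$-angles, which is the conclusion of the lemma.
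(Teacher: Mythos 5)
Your proposal is correct and follows essentially the same route as the paper's proof: the same pointwise biproduct identities, the same commutativity checks for the vertical maps (trivial for the lower-triangular differentials against $\begin{pmatrix}1&0\end{pmatrix}$ and $\begin{pmatrix}0\\1\end{pmatrix}$, and Lemma \ref{lemma 9 of P10}(b),(f) together with Lemma \ref{lemma 10 of P11}(a),(b) for the others), and the same concluding appeal to the fact that direct summands of $(d+2)$-angles are $(d+2)$-angles.
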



\begin{proof}
We start with the biproduct equations. These are clear in degree $d+1$.  In degree $i$ for $0\leq i\leq d$:
\begin{align*}
\begin{pmatrix}
\delta^{i+1}\varphi^i&1
\end{pmatrix}
\begin{pmatrix}
0\\1
\end{pmatrix}=1,
\;\;
\begin{pmatrix}
1&0
\end{pmatrix}
\begin{pmatrix}
1\\-\delta^{i+1}\varphi^i
\end{pmatrix}=1,
\end{align*}
\begin{align*}
\begin{pmatrix}
0\\1
\end{pmatrix}\begin{pmatrix}
\delta^{i+1}\varphi^i&1
\end{pmatrix}+\begin{pmatrix}
1\\-\delta^{i+1}\varphi^i
\end{pmatrix}\begin{pmatrix}
1&0
\end{pmatrix}=\begin{pmatrix}
0&0\\\delta^{i+1}\varphi^i&1
\end{pmatrix}+\begin{pmatrix}
1 &0 \\ -\delta^{i+1}\varphi^i&0
\end{pmatrix}=\begin{pmatrix}
1&0\\0&1
\end{pmatrix}.
\end{align*}
In degree $d+2$, i.e.\ the last column, the equations follow since this column is $\Sigma_d$ of the first column.\\
Now we show commutativity of the numbered parts of the diagram.
\begin{enumerate}[label=\protect\circled{\arabic*}]

\item Suppose $0\leq i\leq d-1$.
\begin{align*}
\begin{pmatrix}
\alpha^i & 0\\ \varphi^i&\gamma^i
\end{pmatrix}\begin{pmatrix}
0\\1
\end{pmatrix}=\begin{pmatrix}
0\\\gamma^i
\end{pmatrix},\quad 
\begin{pmatrix}
0\\1
\end{pmatrix}\gamma^i=\begin{pmatrix}
0\\\gamma^i
\end{pmatrix};
\end{align*}
\begin{align*}
\begin{pmatrix}
\delta^{i+2}\varphi^{i+1}&1
\end{pmatrix}\begin{pmatrix}
\alpha^i&0\\\varphi^i& \gamma^i
\end{pmatrix}&=\begin{pmatrix}
\delta^{i+2}\varphi^{i+1}\alpha^i+\varphi^i&\gamma^i
\end{pmatrix}\\
&=\begin{pmatrix}-\delta^{i+2}\gamma^{i+1}\varphi^i+\varphi^i &\gamma^i\end{pmatrix} \quad\text{Lemma}\, \ref{lemma 9 of P10}(b) \\
&=\begin{pmatrix}\left(1-\delta^{i+2}\gamma^{i+1}\right)\varphi^i&\gamma^i\end{pmatrix}\\
&=\begin{pmatrix}\gamma^i\delta^{i+1}\varphi^i&\gamma^i\end{pmatrix}\quad \text{by Lemma}\, \ref{lemma 10 of P11}(b)\\
&=\gamma^i\begin{pmatrix}\delta^{i+1}\varphi^i&1\end{pmatrix}.
\end{align*}

\item \begin{align}
\begin{pmatrix}
\alpha^d&0\\\varphi^d&\gamma^d
\end{pmatrix}\begin{pmatrix}
0\\1
\end{pmatrix}=\begin{pmatrix}
0\\\gamma^d
\end{pmatrix},\quad
\begin{pmatrix}
0\\1
\end{pmatrix}\gamma^d=\begin{pmatrix}
0\\\gamma^d
\end{pmatrix};
\end{align}
\begin{align*}
\gamma^d\begin{pmatrix}\delta^{d+1}\varphi^d&1\end{pmatrix}&=\begin{pmatrix}\gamma^d\delta^{d+1}\varphi^d&\gamma^d\end{pmatrix}\\ &=\begin{pmatrix}\varphi^d&\gamma^d\end{pmatrix}\quad\text{by Lemma}\,\ref{lemma 10 of P11}(a)\\
&=\begin{pmatrix} 0 & 1\end{pmatrix}\begin{pmatrix}
\alpha^d&0\\ \varphi^d& \gamma^d
\end{pmatrix}.
\end{align*}

\item \begin{align*}
\begin{pmatrix}
\alpha^{d+1}&0\\0&0
\end{pmatrix}\begin{pmatrix} 0\\1\end{pmatrix}=0,\quad \begin{pmatrix} 0\\1\end{pmatrix} 0=0;
\end{align*}
\begin{align*}
\begin{pmatrix} \Sigma_d(\delta^1\varphi^0)&1\end{pmatrix}\begin{pmatrix}\alpha^{d+1}&0\\0&0\end{pmatrix}=\begin{pmatrix}\Sigma_d(\delta^1\varphi^0)\alpha^{d+1}&0\end{pmatrix}=0=0\begin{pmatrix} 0&1\end{pmatrix}
\end{align*}
where we used Lemma \ref{lemma 9 of P10}(f).

\item Suppose $0\leq i\leq d-1$.
\begin{align*}
\begin{pmatrix}
1&0
\end{pmatrix}\begin{pmatrix}
\alpha^i&0\\\varphi^i&\gamma^i
\end{pmatrix}=\begin{pmatrix}
\alpha^i&0
\end{pmatrix}, \quad
\alpha^i\begin{pmatrix}
1&0
\end{pmatrix}=\begin{pmatrix}
\alpha^i&0
\end{pmatrix};
\end{align*}
\begin{align*}
\begin{pmatrix}
\alpha^i&0\\\varphi^i&\gamma^i
\end{pmatrix}\begin{pmatrix}
1\\ -\delta^{i+1}\varphi^i
\end{pmatrix}=\begin{pmatrix}
\alpha^i\\\varphi^i-\gamma^i\delta^{i+1}\varphi^i
\end{pmatrix}=\begin{pmatrix}
\alpha^i\\ (1-\gamma^i\delta^{i+1})\varphi^i
\end{pmatrix}=\begin{pmatrix}
\alpha^i\\\delta^{i+2}\gamma^{i+1}\varphi^i
\end{pmatrix}
\end{align*}
where we used Lemma \ref{lemma 10 of P11}(b) for the last equality. On the other hand, 
\begin{align*}
\begin{pmatrix}
1\\-\delta^{i+2}\varphi^{i+1}
\end{pmatrix}\alpha^i=\begin{pmatrix}
\alpha^i\\-\delta^{i+2}\varphi^{i+1}\alpha^i
\end{pmatrix}=\begin{pmatrix}
\alpha^i\\\delta^{i+2}\gamma^{i+1}\varphi^i
\end{pmatrix}\quad\text{by Lemma}\, \ref{lemma 9 of P10}(b)
\end{align*}
which proves the commutativity.

\item \begin{align*}
\begin{pmatrix}
1&0
\end{pmatrix}\begin{pmatrix}
\alpha^d&0\\\varphi^d&\gamma^d
\end{pmatrix}=\begin{pmatrix}
\alpha^d&0
\end{pmatrix},\quad
\alpha^d\begin{pmatrix}
1&0
\end{pmatrix}=\begin{pmatrix}
\alpha^d&0
\end{pmatrix};
\end{align*}
\begin{align*}
\begin{pmatrix}
\alpha^d&0\\\varphi^d&\gamma^d
\end{pmatrix}\begin{pmatrix}
1\\-\delta^{d+1}\varphi^d
\end{pmatrix}=\begin{pmatrix}
\alpha^d\\\varphi^d-\gamma^d\delta^{d+1}\varphi^d
\end{pmatrix}=\begin{pmatrix}
\alpha^d\\\varphi^d-\varphi^d
\end{pmatrix}=\begin{pmatrix}
\alpha^d\\0
\end{pmatrix}
\end{align*}
where we used Lemma \ref{lemma 10 of P11}\,a. On the other hand
\begin{align*}
\begin{pmatrix}
1\\0
\end{pmatrix}\alpha^d=\begin{pmatrix}
\alpha^d\\0
\end{pmatrix}
\end{align*}
which proves the commutativity.

\item \begin{align*}
\begin{pmatrix}
1&0
\end{pmatrix}\begin{pmatrix}
\alpha^{d+1}&0\\0&0
\end{pmatrix}=\begin{pmatrix}
\alpha^{d+1}&0
\end{pmatrix}, \quad \alpha^{d+1}\begin{pmatrix}
1&0
\end{pmatrix}=\begin{pmatrix}
\alpha^{d+1}&0
\end{pmatrix};
\end{align*}
\begin{align*}
\begin{pmatrix}
\alpha^{d+1}&0\\0&0
\end{pmatrix}\begin{pmatrix}
1\\0
\end{pmatrix}=\begin{pmatrix}
\alpha^{d+1}\\0
\end{pmatrix},\quad \begin{pmatrix}
1\\-\Sigma_d(\delta^1\varphi^0)
\end{pmatrix}\alpha^{d+1}=\begin{pmatrix}
\alpha^{d+1}\\0
\end{pmatrix}.
\end{align*}
\end{enumerate}
\end{proof}

We state the dual to Lemma \ref{lemma 11 P14}.
\begin{lemma}\label{lemma 12 of P20} In a $(d+2)$-angulated category with $d$-suspension $\Sigma^d$, consider the following $(d+2)$-angle.
\begin{align*}
\xymatrix{\Sigma^{-1}_db_0\oplus\Sigma^{-1}_dh_0\ar[rr]^{\tiny \begin{pmatrix}
\Sigma^{-1}_d\beta_0&0\\0&0
\end{pmatrix}}&&b_{d+1}\oplus h_{d+1}\ar[rr]^{\tiny \begin{pmatrix}
\beta_{d+1}&\psi_{d+1}\\0&\eta_{d+1}
\end{pmatrix}}&&b_d\oplus h_d\ar[rr]^{\tiny \begin{pmatrix}
\beta_{d}&\psi_{d}\\0&\eta_{d}
\end{pmatrix}}&&\cdots\ar[rr]^{\tiny \begin{pmatrix}
\beta_{1}&\psi_{1}\\0&\eta_{1}
\end{pmatrix}}&& b_0\oplus h_0}
\end{align*}
Assume
\begin{align*}
\Hom(b_i,h_{i+1})=0\quad\text{for}\quad i=0,\ldots,d\quad \text{and}\quad \Hom(\Sigma_db_{d+1},h_0)=0.
\end{align*}
Then the $(d+2)$-angle splits as a direct sum of the following $(d+2)$-angles:
\begin{align*}
\xymatrix{\Sigma^{-1}_db_0\ar[r]^{\Sigma^{-1}_d\beta_0}&b_{d+1}\ar[r]^{\beta_{d+1}}&b_d\ar[r]^{\beta_d}&\cdots\ar[r]^{\beta_1}&b_0,\\
\Sigma^{-1}_dh_0\ar[r]^0&h_{d+1}\ar[r]^{\eta_{d+1}}&h_d\ar[r]^{\eta_d}&\cdots\ar[r]^{\eta_1}&h_0.}
\end{align*}
\end{lemma}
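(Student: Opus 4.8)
The plan is to obtain Lemma \ref{lemma 12 of P20} as the formal dual of Lemma \ref{lemma 11 P14}, by transporting the latter to the opposite category. Recall that if $\cT$ is $(d+2)$-angulated with $d$-suspension $\Sigma_d$, then $\cT^{\mathrm{op}}$ is again $(d+2)$-angulated with $d$-suspension $\Sigma_d^{-1}$, and a sequence is a $(d+2)$-angle in $\cT^{\mathrm{op}}$ exactly when its reversal is a $(d+2)$-angle in $\cT$. Reading the displayed angle of Lemma \ref{lemma 12 of P20} from right to left therefore yields a $(d+2)$-angle in $\cT^{\mathrm{op}}$, and the first step is to verify that it has precisely the shape required by Set-up \ref{set up p 8}.

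To do this I fix the translation dictionary $a^i := b_i$ and $g^i := h_i$ for $0 \le i \le d+1$, so that the final term $\Sigma_d^{-1} b_0 \oplus \Sigma_d^{-1} h_0$ of the original angle becomes $\Sigma_d^{\mathrm{op}} a^0 \oplus \Sigma_d^{\mathrm{op}} g^0$ in $\cT^{\mathrm{op}}$, as demanded. Passing to the opposite category transposes each differential matrix, so the block upper-triangular differential $\left(\begin{smallmatrix} \beta_{i+1} & \psi_{i+1} \\ 0 & \eta_{i+1}\end{smallmatrix}\right)$ of $\cT$ becomes the block lower-triangular differential $\left(\begin{smallmatrix} \beta_{i+1} & 0 \\ \psi_{i+1} & \eta_{i+1}\end{smallmatrix}\right)$ in $\cT^{\mathrm{op}}$, which is exactly the form $\left(\begin{smallmatrix}\alpha^i & 0 \\ \varphi^i & \gamma^i\end{smallmatrix}\right)$ of Set-up \ref{set up p 8} with $\alpha^i=\beta_{i+1}$, $\varphi^i=\psi_{i+1}$, $\gamma^i=\eta_{i+1}$; the end differential $\left(\begin{smallmatrix}\Sigma_d^{-1}\beta_0 & 0 \\ 0 & 0\end{smallmatrix}\right)$ is self-transpose and supplies the required $\left(\begin{smallmatrix}\alpha^{d+1} & 0 \\ 0 & 0\end{smallmatrix}\right)$. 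The orthogonality hypotheses transport verbatim: $\Hom_{\cT^{\mathrm{op}}}(g^{i+1}, a^i) = \Hom_{\cT}(b_i, h_{i+1}) = 0$ and $\Hom_{\cT^{\mathrm{op}}}(g^0, (\Sigma_d^{\mathrm{op}})^{-1} a^{d+1}) = \Hom_{\cT}(\Sigma_d b_{d+1}, h_0) = 0$, which are precisely the two vanishing conditions assumed in Lemma \ref{lemma 12 of P20}.

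With the dictionary in place, I would invoke Lemma \ref{lemma 11 P14} in $\cT^{\mathrm{op}}$ to split this angle, inside the category of $(d+2)$-$\Sigma_d^{\mathrm{op}}$-sequences, as a biproduct of the $a$-row and the $g$-row. Reversing the arrows back to $\cT$ turns the $a$-row into $\Sigma_d^{-1}b_0 \to b_{d+1} \to \cdots \to b_0$ with differentials $\Sigma_d^{-1}\beta_0, \beta_{d+1}, \ldots, \beta_1$, and the $g$-row into $\Sigma_d^{-1} h_0 \xrightarrow{0} h_{d+1} \to \cdots \to h_0$ with differentials $0, \eta_{d+1}, \ldots, \eta_1$. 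These are exactly the two $(d+2)$-angles asserted in the statement, and the direct-sum decomposition survives the reversal because biproduct diagrams are self-dual.

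The only genuine point requiring care is the $(d+2)$-angulated structure on $\cT^{\mathrm{op}}$, together with the sign conventions in the higher octahedral axiom, and I expect this to be the main (though minor) obstacle. It can be sidestepped entirely: the conclusion of Lemma \ref{lemma 11 P14} is an explicit biproduct diagram whose validity rests only on the compositional relations of Lemma \ref{lemma 9 of P10}, the contracting homotopy of Lemma \ref{lemma 10 of P11}, and formal matrix identities, all of which are statements about $\Hom$-groups that dualize verbatim. Thus, rather than citing the $(d+2)$-angulated structure of the opposite category, one may instead mirror the proof directly: construct a contracting homotopy $h_i \to h_{i+1}$ for the $h$-row by the dual of the lifting argument in Lemma \ref{lemma 10 of P11}, and write down the transposed biproduct diagram, whose commuting squares and biproduct identities are the transposes of those verified for Lemma \ref{lemma 11 P14}.
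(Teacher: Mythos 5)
Your proposal is correct and is essentially the paper's own argument: the paper offers no separate proof of Lemma \ref{lemma 12 of P20}, simply declaring it to be the dual of Lemma \ref{lemma 11 P14}, and your dictionary ($a^i=b_i$, $g^i=h_i$, transposed differentials, transported orthogonality conditions) is exactly the verification that this dualization works. Your closing remark that one can avoid invoking the $(d+2)$-angulated structure on $\cT^{\mathrm{op}}$ by mirroring the explicit homotopy and biproduct computations is a sensible extra precaution, but not a departure from the intended route.
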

Note that since the connecting morphism in the latter angle is zero, we can find a null homotopy for the latter angle consisting of $\epsilon_i:h_i\rightarrow h_{i+1}$ for $i=0,\ldots,d$ such that
\begin{align*}
\eta_1\epsilon_0=\operatorname{id}_{h_0},\,\, \eta_{i+1}\epsilon_i+\epsilon_{i-1}\eta_{i}=\operatorname{id}_{h_i}\quad\text{for}\, i=1,\ldots,d,\;\; \epsilon_d\eta_{d+1}=\operatorname{id}_{h_{d+1}}.
\end{align*}

\begin{lemma}\label{lemma 13 of P22} In an additive category, assume that 
\begin{align*}
\xymatrix{a_1\oplus a_0\ar[rr]^{\begin{pmatrix}
\alpha_{11}&\alpha_{10}\\0&\alpha_{00}
\end{pmatrix}}&&a_1\oplus a_0}
\end{align*}
is an idempotent. Then,
\begin{enumerate}[label=\alph*)]
\item $\alpha_{11}$ and $\alpha_{00}$ are idempotents.
\item If $\alpha_{11}$ and $\alpha_{00}$ are split idempotents, then so is $\alpha=\begin{pmatrix}
\alpha_{11}&\alpha_{10}\\0&\alpha_{00}
\end{pmatrix}$.
\end{enumerate}
\end{lemma}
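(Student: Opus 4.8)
The plan is to handle the two parts in turn, using the first to power the second. For part (a), I would expand the idempotent relation $\alpha^{2}=\alpha$ as a product of upper-triangular matrices:
\begin{equation*}
\alpha^{2}=\begin{pmatrix}\alpha_{11}^{2} & \alpha_{11}\alpha_{10}+\alpha_{10}\alpha_{00}\\ 0 & \alpha_{00}^{2}\end{pmatrix}.
\end{equation*}
Comparing the diagonal entries with those of $\alpha$ gives $\alpha_{11}^{2}=\alpha_{11}$ and $\alpha_{00}^{2}=\alpha_{00}$, which is exactly part (a); the off-diagonal entry records the auxiliary relation
\begin{equation}
\alpha_{11}\alpha_{10}+\alpha_{10}\alpha_{00}=\alpha_{10},\tag{$\ast$}
\end{equation}
which will be the engine of part (b). This part presents no obstacle.

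For part (b), fix splittings $p_{1}i_{1}=\operatorname{id}_{b_{1}}$, $i_{1}p_{1}=\alpha_{11}$ and $p_{0}i_{0}=\operatorname{id}_{b_{0}}$, $i_{0}p_{0}=\alpha_{00}$ through objects $b_{1}$ and $b_{0}$. Conceptually, the statement holds because $\alpha$ is conjugate, via a unipotent automorphism $\bigl(\begin{smallmatrix}\operatorname{id} & t\\ 0 & \operatorname{id}\end{smallmatrix}\bigr)$ with $t$ solving the Sylvester equation $\alpha_{11}t-t\alpha_{00}=\alpha_{10}$, to the block-diagonal idempotent $\bigl(\begin{smallmatrix}\alpha_{11} & 0\\ 0 & \alpha_{00}\end{smallmatrix}\bigr)$, and the latter splits through $b_{1}\oplus b_{0}$ because each block does; since conjugation by an automorphism carries a splitting to a splitting, $\alpha$ splits too. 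Rather than solve the Sylvester equation abstractly, I would exhibit the resulting splitting directly: set $B=b_{1}\oplus b_{0}$ and
\begin{equation*}
P=\begin{pmatrix}p_{1} & p_{1}\alpha_{10}\\ 0 & p_{0}\end{pmatrix}\colon a_{1}\oplus a_{0}\to B,\qquad I=\begin{pmatrix}i_{1} & \alpha_{10}i_{0}\\ 0 & i_{0}\end{pmatrix}\colon B\to a_{1}\oplus a_{0},
\end{equation*}
and then verify $PI=\operatorname{id}_{B}$ and $IP=\alpha$ by matrix multiplication. The product $IP$ reproduces $\alpha$ using only ($\ast$) in its upper-right corner, while $PI$ produces the upper-right entry $2\,p_{1}\alpha_{10}i_{0}$.

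The main obstacle --- indeed the only real content of the lemma --- is the vanishing $p_{1}\alpha_{10}i_{0}=0$, without which $PI$ fails to be the identity; this is precisely why the naive off-diagonal choices for $P$ and $I$ do not work. I would extract it from ($\ast$): left-multiplying by $p_{1}$ and using $p_{1}\alpha_{11}=p_{1}i_{1}p_{1}=p_{1}$ gives $p_{1}\alpha_{10}\alpha_{00}=0$, and then right-multiplying by $i_{0}$ and using $\alpha_{00}i_{0}=i_{0}p_{0}i_{0}=i_{0}$ gives $p_{1}\alpha_{10}i_{0}=0$. With this identity in hand, $PI=\operatorname{id}_{B}$ holds, so the triple $(B,I,P)$ exhibits $\alpha$ as a split idempotent and the proof is complete.
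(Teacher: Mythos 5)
Your proof is correct and takes essentially the same route as the paper: part (a) and the auxiliary relation $(\ast)$ are identical, and in part (b) your $P$ and $I$ coincide with the paper's matrices $\left(\begin{smallmatrix}\pi_1 & \pi_1\alpha_{10}\\ 0 & \pi_0\end{smallmatrix}\right)$ and $\left(\begin{smallmatrix}\iota_1 & \alpha_{10}\iota_0\\ 0 & \iota_0\end{smallmatrix}\right)$, resting on the same key vanishing $\pi_1\alpha_{10}\iota_0=0$ (which the paper derives via the relation $x=2x$ rather than your two-step argument through $p_1\alpha_{10}\alpha_{00}=0$). The only divergence is that you omit the complementary splitting of $\operatorname{id}-\alpha$ through $c_1\oplus c_0$ (and the companion identity $\rho_1\alpha_{10}\kappa_0=0$) that the paper assembles into a full biproduct diagram; since exhibiting a retract with $PI=\operatorname{id}$ and $IP=\alpha$ already witnesses that $\alpha$ is split, your leaner version suffices for the stated conclusion.
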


\begin{proof}
\begin{enumerate}[label=\alph*)]
\item We have 
\begin{align*}
\begin{pmatrix}
\alpha_{11}&\alpha_{10}\\0&\alpha_{00}
\end{pmatrix}=&\begin{pmatrix}
\alpha_{11}&\alpha_{10}\\0&\alpha_{00}
\end{pmatrix}\begin{pmatrix}
\alpha_{11}&\alpha_{10}\\0&\alpha_{00}
\end{pmatrix}\\
=&\begin{pmatrix}
\alpha^2_{11}&\alpha_{11}\alpha_{10}+\alpha_{10}\alpha_{00}\\0&\alpha^2_{00}
\end{pmatrix}
\end{align*}
proving part (a). Note that we also showed 
\begin{align}\label{eq P22 Lemma 13}
\alpha_{10}=\alpha_{11}\alpha_{10}+\alpha_{10}\alpha_{00}.
\end{align}
\item Assume that $\alpha_{11}$ and $\alpha_{00}$ are split idempotents. Then we have the following biproduct diagrams splitting $\alpha_{ii}$ for $i=0,1$.
\begin{align*}
\xymatrix{b_i\ar@<1ex>[r]^{\iota_i}&a_i\ar@<1ex>[r]^{\rho_i}\ar@<1ex>[l]^{\pi_i}&c_i\ar@<1ex>[l]^{\kappa_i}}
\end{align*}
\begin{align}
\label{eq P23 13.2}\iota_i\pi_i+\kappa_i\rho_i=\operatorname{id}_{a_i}\\
\label{eq P23 13.3}\pi_i\iota_i=\operatorname{id}_{b_i}\\
\label{eq P23 13.4}\rho_i\kappa_i=\operatorname{id}_{c_i}\\
\label{eq P23 13.5}\iota_i\pi_i=\alpha_{ii}\\
\label{eq P23 13.6}\kappa_i\rho_i=\operatorname{id}_{a_i}-\alpha_{ii}
\end{align}
\end{enumerate}
Observe that 
\begin{align}
\rho_1\alpha_{10}\kappa_0=&\rho_1\left(\alpha_{11}\alpha_{10}+\alpha_{10}\alpha_{00}\right)\kappa_0\quad \text{by}\,\eqref{eq P22 Lemma 13}\nonumber \\
=&\rho_1\left(\left(\operatorname{id}_{a_1}-\kappa_1\rho_1\right)\alpha_{10}+\alpha_{10}\left(\operatorname{id}_{a_0}-\kappa_0\rho_0\right)\right)\kappa_0 \quad\text{by}\,\eqref{eq P23 13.6}\nonumber \\
=&\rho_1\left(\alpha_{10}-\kappa_1\rho_1\alpha_{10}+\alpha_{10}-\alpha_{10}\kappa_{0}\rho_0\right)\kappa_0\nonumber \\
=&2\rho_1\alpha_{10}\kappa_0-\left(\rho_1\kappa_1\rho_1\alpha_{10}\kappa_0+\rho_1\alpha_{10}\kappa_0\rho_0\kappa_0\right) \quad\text{by}\,\eqref{eq P23 13.4}\nonumber \\
=&0. \nonumber
\end{align}
Therefore 
\begin{align}
\label{eq P23 13.7} \rho_1\alpha_{10}\kappa_0=0.
\end{align}
Moreover
\begin{align}
\pi_1\alpha_{10}\iota_0=&\pi_1\left(\alpha_{11}\alpha_{10}+\alpha_{10}\alpha_{00}\right)\iota_0\nonumber\\
=&\pi_1\left(\iota_1\pi_1\alpha_{10}+\alpha_{10}\iota_0\pi_0\right)\iota_0\quad\text{by}\,\eqref{eq P23 13.5}\nonumber\\
=&(\pi_1\iota_1)\pi_1\alpha_{10}\iota_0+\pi_1\alpha_{10}\iota_0(\pi_{0}\iota_0)\nonumber\\
=&\pi_1\alpha_{10}\iota_0+\pi_1\alpha_{10}\iota_0\quad\text{by}\,\eqref{eq P23 13.3},\nonumber
\end{align}
which implies
\begin{align}\label{eq P24 13.8}
\pi_1\alpha_{10}\iota_0=0.
\end{align}

We now write down a biproduct diagram splitting $\alpha$.
\begin{align*}
\xymatrix{b_1\oplus b_0\ar@<1ex>[rr]^{\tiny \begin{pmatrix}
\iota_1 & \alpha_{10}\iota_0\\0&\iota_0
\end{pmatrix}}&&a_1\oplus a_0\ar@<1ex>[rr]^{\tiny \begin{pmatrix}
-\rho_1 & \rho_1\alpha_{10}\\0&-\rho_0
\end{pmatrix}}\ar@<1ex>[ll]^{\tiny \begin{pmatrix}
\pi_1 & \pi_1\alpha_{10}\\0&\pi_0
\end{pmatrix}}&&c_1\oplus c_0\ar@<1ex>[ll]^{\tiny \begin{pmatrix}
-\kappa_1 & \alpha_{10}\kappa_0\\0&-\kappa_0
\end{pmatrix}}}
\end{align*}
We check the required properties.
\begin{align*}
&\begin{pmatrix}
\iota_1&\alpha_{10}\iota_0\\0&\iota_0
\end{pmatrix}\begin{pmatrix}
\pi_1&\pi_1\alpha_{10}\\0&\pi_0
\end{pmatrix}+\begin{pmatrix}
-\kappa_1&\alpha_{10}\kappa_0\\0&-\kappa_0
\end{pmatrix}\begin{pmatrix}
-\rho_1&\rho_1\alpha_{10}\\0&-\rho_0
\end{pmatrix}=\\
&\begin{pmatrix}
\iota_1\pi_1&\iota_1\pi_1\alpha_{10}+\alpha_{10}\iota_0\pi_0\\0&\iota_0\pi_0
\end{pmatrix}+\begin{pmatrix}
\kappa_1\rho_1&-\kappa_1\rho_1\alpha_{10}-\alpha_{10}\kappa_0\rho_0\\0&\kappa_0\rho_0
\end{pmatrix}=\quad \text{by}\,\eqref{eq P23 13.5}\,\text{and}\,\eqref{eq P23 13.6}\\
&\begin{pmatrix}
\alpha_{11}&\alpha_{11}\alpha_{10}\\0&\alpha_{00}
\end{pmatrix}+
\begin{pmatrix}
\operatorname{id}_{a_1}-\alpha_{11}&(\alpha_{11}-\operatorname{id}_{a_1})\alpha_{10}+\alpha_{10}(\alpha_{00}-\operatorname{id}_{a_0})\\0&\operatorname{id}_{a_0}-\alpha_{00}\end{pmatrix}\\
&\begin{pmatrix}
\operatorname{id}_{a_1}&2(\alpha_{11}\alpha_{10}+\alpha_{10}\alpha_{00})-2\alpha_{10}\\0&\operatorname{id}_{a_0}\end{pmatrix}= \quad\text{by}\,\eqref{eq P22 Lemma 13}\\
&\begin{pmatrix}
\operatorname{id}_{a_1}&0\\0&\operatorname{id}_{a_0}
\end{pmatrix};
\end{align*}
\begin{align*}
&\begin{pmatrix}
\pi_1&\pi_1\alpha_{10}\\0&\pi_0
\end{pmatrix}\begin{pmatrix}
\iota_1&\alpha_{10}\iota_0\\0&\iota_0
\end{pmatrix}=
\begin{pmatrix}
\pi_1\iota_1&\pi_1\alpha_{10}\iota_0+\pi_1\alpha_{10}\iota_0\\0&\pi_0\iota_0
\end{pmatrix}=
\begin{pmatrix}
\operatorname{id}_{b_1}&0\\0&\operatorname{id}_{b_0}
\end{pmatrix}
\end{align*}
by \eqref{eq P23 13.3} and \eqref{eq P24 13.8};
\begin{align*}
&\begin{pmatrix}
-\rho_1&\rho_1\alpha_{10}\\0&-\rho_0
\end{pmatrix}\begin{pmatrix}
-\kappa_1&\alpha_{10}\\0&-\kappa_0
\end{pmatrix}=\begin{pmatrix}
\rho_1\kappa_1&-\rho_1\alpha_{10}\kappa_0-\rho_1\alpha_{10}\kappa_0\\0&\rho_0\kappa_0
\end{pmatrix}=\begin{pmatrix}
\operatorname{id}_{c_1}&0\\0&\operatorname{id}_{c_0}
\end{pmatrix}
\end{align*}
by \eqref{eq P23 13.4} and \eqref{eq P23 13.7};
\begin{align*}
&\begin{pmatrix}
\iota_1&\alpha_{10}\iota_1\\0&\iota_0
\end{pmatrix}\begin{pmatrix}
\pi_1&\pi_1\alpha_{10}\\0&\pi_0
\end{pmatrix}=\begin{pmatrix}
\iota_1\pi_1&\iota_1\pi_1\alpha_{10}+\alpha_{10}\iota_0\pi_0\\0&\iota_0\pi_0
\end{pmatrix}= \quad\text{by}\,\eqref{eq P23 13.5}\\
&\begin{pmatrix}
\alpha_{11}&\alpha_{11}\alpha_{10}+\alpha_{10}\alpha_{00}\\0&\alpha_{00}\end{pmatrix}= \quad\text{by}\,\eqref{eq P22 Lemma 13}\\
&\begin{pmatrix}
\alpha_{11}&\alpha_{10}\\0&\alpha_{00}
\end{pmatrix}=\alpha. \qedhere
\end{align*}
\end{proof}

\begin{lemma} \label{lemma 14 P27} If each $\fa_m$ has split idempotents, then so does each $\cT_{[i,s]}$.
\end{lemma}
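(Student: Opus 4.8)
The statement is an idempotent-splitting result, so I would fix an object $t$ of $\cT_{[i,s]}$, which by Equation \eqref{3.3} we may write as $t = a_s \oplus a_{s-1} \oplus \cdots \oplus a_i$ with $a_j \in \fa_j$, together with an idempotent endomorphism $e$ of $t$, and show that $e$ splits \emph{inside} $\cT_{[i,s]}$. The natural mechanism is induction on $s-i \geq 0$, peeling off the top summand $a_s$ at each step and invoking Lemma \ref{lemma 13 of P22}, which is precisely the two-block version of exactly this statement. The base case $s=i$ is immediate: then $t = a_i \in \fa_i$ and $e$ splits by the hypothesis that $\fa_i$ has split idempotents.

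For the inductive step I would regroup $t = a_s \oplus t'$ with $t' := a_{s-1} \oplus \cdots \oplus a_i \in \cT_{[i,s-1]}$ and write $e$ as a $2 \times 2$ matrix with respect to this two-block decomposition. The key observation is that $e$ is upper triangular for this grouping: the lower-left block is the component $a_s \to t'$, built from morphisms $\fa_s \to \fa_j$ with $j \leq s-1$, and these all vanish by set-up \ref{setup 1 page 1}\ref{set up 1 a} since $j \notin \{s, s+1\}$. Thus $e = \left(\begin{smallmatrix} e_{ss} & e_{s,t'} \\ 0 & e_{t'} \end{smallmatrix}\right)$ is exactly of the form treated in Lemma \ref{lemma 13 of P22} (with $a_s$ playing the role of $a_1$ and $t'$ that of $a_0$), the upper-right block $e_{s,t'}$ being harmless.

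Lemma \ref{lemma 13 of P22}(a) then gives that the diagonal blocks $e_{ss} \colon a_s \to a_s$ and $e_{t'} \colon t' \to t'$ are themselves idempotents. The idempotent $e_{ss}$ splits within $\fa_s$ by hypothesis, so its splitting objects lie in $\fa_s$, while $e_{t'}$ splits within $\cT_{[i,s-1]}$ by the inductive hypothesis, so its splitting objects lie in $\cT_{[i,s-1]}$. Feeding these two split idempotents into Lemma \ref{lemma 13 of P22}(b) yields a splitting of $e$ itself. Finally I would check that the resulting retract (and its complement) stay inside $\cT_{[i,s]}$: the splitting objects produced by Lemma \ref{lemma 13 of P22} are direct sums of a summand of $a_s$ (in $\fa_s$) with a summand of $t'$ (in $\cT_{[i,s-1]}$), hence lie in $\add(\fa_s, \fa_{s-1}, \ldots, \fa_i) = \cT_{[i,s]}$. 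This closes the induction.

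The only real content beyond bookkeeping is the verification of triangularity of $e$ from set-up \ref{setup 1 page 1}\ref{set up 1 a}, and the tracking that the retracts remain in $\cT_{[i,s]}$ rather than escaping to a larger ambient category; both are routine given the vanishing of $\Hom$ between the $\fa_j$. I do not anticipate a genuine obstacle here, since Lemma \ref{lemma 13 of P22} already isolates the one nontrivial step (splitting an upper-triangular idempotent with split diagonal), and the present lemma is essentially its $(s-i+1)$-fold iteration.
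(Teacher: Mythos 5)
Your proof is correct and follows essentially the same route as the paper: induction on $s-i$, peeling off the top summand $a_s$, observing that the idempotent is upper triangular by Set-up \ref{setup 1 page 1}\ref{set up 1 a}, and then applying Lemma \ref{lemma 13 of P22}(a) and (b). The only difference is that you explicitly verify that the splitting objects remain in $\cT_{[i,s]}$, a point the paper leaves implicit.
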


\begin{proof}
Let $i$ be fixed. We use induction on $n=s-i$.
Set $n=0$. Then $s=i$ and $\cT_{[i,s]}=\cT_{[i,i]}=\fa_i$ has split idempotents by assumption.\\
Consider an object $a$ of $\cT_{[i,s]}$ which by definition can be written $a_s\oplus a_{s'}$ with $a_s\in \fa_s$, $a_{s'}\in\cT_{[i,s-1]}$. Let $\alpha$ be an idempotent endomorphism of $a$. By set-up \ref{setup 1 page 1} we can write
\begin{align}
\alpha=\begin{pmatrix}
\alpha_{ss}&\alpha_{ss'}\\0&\alpha_{s's'}
\end{pmatrix}.
\end{align}
Lemma \ref{lemma 13 of P22}(a) implies that $\alpha_{ss}$ and $\alpha_{s's'}$ are idempotent. By assumption, $a_{ss}$ is split idempotent and by induction $\alpha_{s's'}$ is split. But then $\alpha$ is split by Lemma \ref{lemma 13 of P22} b.
 \end{proof}
\subsection{From higher angulated categories to $d$-abelian categories}\label{subsect set up 3 proof of thm C D}
\begin{setup}\label{set up 15 P28}
We keep set-up \ref{setup 1 page 1} and assume further that:
\begin{enumerate}[label=\alph*)]
\item\label{set up 15 a} $\cT$ is $(d+2)$-angulated with $d$-suspension $\Sigma_d$.
\item\label{set up 15 b} $\fa_0$ has split idempotents.
\item\label{set up 15 c} $\fa_j=\Sigma^j_{d}\fa_0$.
\item\label{set up 15 d} $m\geq 0$ is a fixed integer and $\cT_{[0,m+1]}$ is closed under $d$-extensions in the sense: Given a morphism 
\begin{align*}
\xymatrix{t^{d+1}\ar[r]^{\delta}&\Sigma_dt^0}
\end{align*}
with $t^{d+1},t^0\in\cT_{[0,m+1]}$, there is a $(d+2)$-triangle 
\begin{align*}
\xymatrix{t^0\ar[r]&t^1\ar[r]&\cdots\ar[r]&t^d\ar[r]&t^{d+1}\ar[r]^{\delta}&\Sigma_dt^0}
\end{align*}
in $\cT$ with $t^1,\ldots,t^d\in\cT_{[0,m+1]}$.
\end{enumerate}
\end{setup}
Here we restate Theorem \ref{thm higher case B most general}.
\begin{theorem}\label{THM 16 P29 } $\cT_{[0,m]}$ is a $n$-abelian category where $n=\left(m+1\right)\left(d+2\right)-2$.
\end{theorem}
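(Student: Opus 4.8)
The plan is to verify, for $n=(m+1)(d+2)-2$, the four axioms of Jasso's Definition~\ref{def d-abelian}: idempotent completeness (Axiom~\ref{d-ab axiom 0}), existence of $n$-kernels and $n$-cokernels (Axiom~\ref{d-ab axiom 1}), and the two exactness axioms \ref{d-ab axiom 2} and \ref{d-ab axiom 2 op}. Since $\cT_{[0,m]}$ is a full subcategory of $\cT$ all Hom-groups are computed in $\cT$, and the treatment of $n$-kernels is dual to that of $n$-cokernels, so I would concentrate on the latter. Axiom~\ref{d-ab axiom 0} is immediate: $\fa_0$ has split idempotents (Set-up~\ref{set up 15 P28}), and since $\fa_j=\Sigma^j_d\fa_0$ with $\Sigma_d$ an autoequivalence each $\fa_j$ has split idempotents, so Lemma~\ref{lemma 14 P27} transfers this to $\cT_{[0,m]}$.

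For Axiom~\ref{d-ab axiom 1}, I would build the $n$-cokernel of $f_0\colon X_0\to X_1$ by concatenating $m+1$ copies of $(d+2)$-angles. The guiding count is that an $n$-exact sequence has $n+2=(m+1)(d+2)$ objects, i.e.\ exactly $m+1$ blocks of $d+2$ objects joined by $m$ connecting morphisms; in particular the case $m=0$ is a single $(d+2)$-angle exhibiting $\fa_0$ as a $d$-abelian category. Completing $f_0$ to a $(d+2)$-angle in $\cT$, the grading forces the connecting term to involve $\Sigma_d X_0\in\cT_{[1,m+1]}$, and using that $\cT_{[0,m+1]}$ is closed under $d$-extensions (Set-up~\ref{set up 15 P28}) the intermediate terms may be chosen in $\cT_{[0,m+1]}$. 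The top-degree ($\fa_{m+1}$) summands can then be split off as a trivial subangle by the splitting Lemmas~\ref{lemma 11 P14} and \ref{lemma 12 of P20}, whose Hom-vanishing hypotheses are exactly those of Set-up~\ref{setup 1 page 1}, and applying the truncation functors $\tau_{\geq i}$ and $\tau_{\leq s}$ of Lemmas~\ref{lemma 6 p6} and \ref{lemma 7 p7} confines all terms to $\cT_{[0,m]}$ while producing the morphism linking one block to the next. Iterating $m+1$ times and splicing would yield a sequence $X_0\to X_1\to\cdots\to X_{n+1}$ in $\cT_{[0,m]}$ of the required length.

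It then remains to prove that this sequence is an $n$-cokernel and, more generally, that Axioms~\ref{d-ab axiom 2} and \ref{d-ab axiom 2 op} hold. For this I would apply $\cT(-,Y)$ for arbitrary $Y\in\cT_{[0,m]}$ and check exactness of the resulting complex of abelian groups: within each block the Yoneda long exact sequence of a $(d+2)$-angle supplies exactness, while at the $m$ junctions the Hom-vanishing of Set-up~\ref{setup 1 page 1} annihilates the stray groups $\cT(\Sigma^{\pm}_d(-),Y)$ so that the blockwise exact sequences splice together, and Lemmas~\ref{lemma 11 P14} and \ref{lemma 12 of P20} ensure that the discarded top-degree parts contribute only contractible summands. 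The hard part will be exactly this verification: showing that the spliced sequence is genuinely $n$-exact rather than merely a complex, and that when $f_0$ is in addition a monomorphism (respectively $f_d$ an epimorphism) the full sequence satisfies the stronger Axiom~\ref{d-ab axiom 2} (respectively \ref{d-ab axiom 2 op}). Managing the cones so that every term stays in $\cT_{[0,m]}$ while tracking the connecting morphisms across all $m+1$ blocks is where the grading, the truncation adjunctions, and the splitting lemmas must be combined with care.
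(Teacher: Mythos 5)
Your overall architecture matches the paper's: verify Jasso's axioms, obtain the $(d+2)$-angle from closure under $d$-extensions, confine everything to $\cT_{[0,m]}$ with the truncation functors, and use the splitting Lemmas~\ref{lemma 11 P14} and~\ref{lemma 12 of P20} for the exactness axioms. But there is a genuine gap in how you deploy the splitting lemmas, and it sits exactly where the real work of the proof lives.

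You assert that the top-degree summands ``can be split off as a trivial subangle by the splitting Lemmas~\ref{lemma 11 P14} and~\ref{lemma 12 of P20}, whose Hom-vanishing hypotheses are exactly those of Set-up~\ref{setup 1 page 1}.'' That is not so: Set-up~\ref{set up p 8} additionally demands that the connecting morphism of the angle be $\left(\begin{smallmatrix}\alpha^{d+1}&0\\0&0\end{smallmatrix}\right)$, and this does \emph{not} follow from Set-up~\ref{setup 1 page 1}. Writing $t^{d+1}=a^{d+1}\oplus g^{d+1}$ and $\Sigma_dt^0=\Sigma_da^0\oplus\Sigma_dg^0$, the Hom-vanishing only kills the component $a^{d+1}\to\Sigma_dg^0$; the components $g^{d+1}\to\Sigma_da^0$ and $g^{d+1}\to\Sigma_dg^0$ survive for a general morphism $f_0$, so the angle need not split. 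Consequently your construction of the $n$-cokernel for Axiom~\ref{d-ab axiom 1} (split, truncate, iterate, splice) breaks down for arbitrary morphisms. The paper does not split at all in Axiom~\ref{d-ab axiom 1}: it spins the single angle \eqref{eq 16.1 P29} out to an infinite $\Sigma_d$-periodic complex and applies the additive functor $\tau_{\leq m}$ of Lemma~\ref{lemma 7 p7}; the strong-envelope (adjunction) property transports exactness under $\Hom_{\cT}(-,t)$ for $t\in\cT_{[0,m]}$, and $\tau_{\leq m}\Sigma_d^{m+1}t^0=0$ terminates the sequence at length $n=d+m(d+2)$. The splitting lemmas enter only in Axioms~\ref{d-ab axiom 2} and~\ref{d-ab axiom 2 op}, and there the decisive step --- which you explicitly defer as ``the hard part'' --- is precisely to prove that monicity of $f_0$ in $\cT_{[0,m]}$ forces the two surviving components of the connecting morphism to vanish (precompose with the rotated angle and use $\Sigma_d^{-1}g^{d+1}\in\cT_{[0,m]}$), after which Set-up~\ref{set up p 8} applies; one then still needs Lemma~\ref{lemma 10 of P11} to produce an explicit null homotopy of the split-off $g$-part, since ``contributes only contractible summands'' is exactly what must be constructed by descending induction from the Hom-vanishing. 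As it stands the proposal identifies the correct ingredients but omits the two arguments that make them applicable.
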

\begin{proof}
We will verify the axioms \ref{def d-abelian}.\\
\ref{d-ab axiom 0}. $\fa_0$ has split idempotents by assumption. It follows that so does each $\fa_j=\Sigma^j_d\fa_0$. Hence so does $\cT_{[0,m]}$ by Lemma \ref{lemma 14 P27}.\\
\ref{d-ab axiom 1} for $n$-cokernels. Let $\varphi:t^0\rightarrow t^1$ be a morphism in $\cT_{[0,m]}$. Writing it as $\Sigma^{-1}_d\Sigma_dt^0\rightarrow t^1$ we have $\Sigma_dt^0\in\cT_{[1,m+1]}$ so by set-up \ref{set up 15 P28}(d) there is a $(d+2)$-angle
\begin{align}\label{eq 16.1 P29}
\xymatrix{t^0\ar[r]^{\varphi}&t^1\ar[r]&t^2\ar[r]&\cdots\ar[r]&t^{d+1}\ar[r]&\Sigma_dt^0}
\end{align}
with $t^2,\ldots,t^{d+1}\in\cT_{[0,m+1]}$. It can be spun out to a diagram
\begin{align*}
\xymatrix{t^0\ar[r]&\cdots\ar[r]&t^{d+1}\ar[r]&\Sigma_dt^0\ar[r]&\cdots\ar[r]&\Sigma_dt^{d+1}\ar[r]&\cdots}
\end{align*}
which becomes exact under $\Hom_{ \cT }(-,t)$ for each $t\in\cT$. Hence
\begin{align*}
\xymatrix{\tau_{\leq m}t^0\ar[r]&\cdots\ar[r]&\tau_{\leq m}t^{d+1}\ar[r]&\tau_{\leq m}\Sigma_dt^0\ar[r]&\cdots\ar[r]&\tau_{\leq m}\Sigma_dt^{d+1}\ar[r]&\cdots}
\end{align*}
becomes exact under $\Hom_{ \cT }(-,t)$ for each $t\in\cT_{\leq m}$. In particular it becomes exact under $\Hom_{ \cT }(-,t)$ for each $t\in\cT_{[0,m]}$.
Now observe that $\xymatrix{\tau_{\leq m}t^0\ar[r]^{\tau_{\leq m}\varphi}&\tau_{\leq m}t^1}$ is simply $\xymatrix{t^0\ar[r]^{\varphi}&t^1}$ and that $\Sigma^{m+1}_dt^0\in\cT_{[m+1,2m+1]}$ whence $\tau_{\leq m}\Sigma^{m+1}_{d}t^0=0$. Hence
\begin{gather}
\xymatrix{t^0\ar[r]^{\varphi}&t^1\ar[r]&\tau_{\leq m}t^2\ar[r]&\cdots\ar[r]&\tau_{\leq m}t^{d+1}\ar[r]&}\nonumber\\
\xymatrix{\tau_{\leq m}\Sigma_dt^0\ar[r]&\cdots\ar[r]&\tau_{\leq m}\Sigma_dt^{d+1}\ar[r]&}\nonumber\\
\xymatrix{\cdots\ar[r]&}\nonumber\\
\xymatrix{\tau_{\leq m}\Sigma^m_dt^0\ar[r]&\cdots\ar[r]&\tau_{\leq m}\Sigma^m_dt^{d+1}}\label{eq 16.2}
\end{gather}
provides a $(d+m(d+2))$-cokernel of $\varphi$ in $\cT_{[0,m]}$, but we have $n=d+m(d+2)$.\\
\ref{d-ab axiom 1} for $n$-kernels: Let $\varphi:t_1\rightarrow t_0$ be a morphism in $\cT_{[0,m]}$. Writing it as $\varphi:t_1\rightarrow\Sigma_d\Sigma^{-1}_dt_0$, we have $\Sigma^{-1}_dt_0\in\cT_{[-1,m-1]}$ so by set-up \ref{set up 15 P28}(d) there is a $(d+2)$-triangle
\begin{align}\label{eq 16.3 P30}
\xymatrix{\Sigma^{-1}_dt_0\ar[r]&t_{d+1}\ar[r]&\cdots\ar[r]&t_1\ar[r]^{\varphi}\ar[r]&\Sigma_d\Sigma^{-1}_dt_0}
\end{align}
with $t_{d+1},\ldots,t_2\in\cT_{[-1,m]}$. It can be spun out to a diagram
\begin{align*}
\xymatrix{\cdots\ar[r]&\Sigma^{-1}_dt_{d+1}\ar[r]&\cdots\ar[r]&\Sigma^{-1}_dt_0\ar[r]&t_{d+1}\ar[r]&\cdots\ar[r]&t_0}
\end{align*}
which becomes an exact sequence under $\Hom_{ \cT }(t,-)$ for each $t\in\cT$. Hence
\begin{align*}
\xymatrix{\cdots\ar[r]&\tau_{\geq 0}\Sigma^{-1}_dt_{d+1}\ar[r]&\cdots\ar[r]&\tau_{\geq 0}\Sigma^{-1}_dt_0\ar[r]&\tau_{\geq 0}t_{d+1}\ar[r]&\cdots\ar[r]&\tau_{\geq 0}t_0}
\end{align*}
becomes exact under $\Hom_{ \cT }(t,-)$ for each $t\in\cT_{\geq 0}$ by Lemma \ref{lemma 6 p6}. In particular, it becomes exact under $\Hom_{ \cT }(t,-)$ for each $t\in\cT_{[0,m]}$.\\
Now observe that $\xymatrix{\tau_{\geq 0}t_1\ar[r]^{\tau_{\geq 0}\varphi}&\tau_{\geq 0}t_0}$ is simply $\xymatrix{t_1\ar[r]^{\varphi}&t_0}$ and that $\Sigma^{-m-1}_dt_0\in\cT_{[-m-1,-1]}$ whence $\tau_{\geq 0}\Sigma^{-m-1}_dt_0=0$. Hence
\begin{gather}
\xymatrix{\tau_{\geq 0}\Sigma^{-m}_dt_{d+1}\ar[r]&\cdots\ar[r]&\tau_{\geq 0}\Sigma^{-m}_dt_0\ar[r]&}\nonumber\\
\xymatrix{\tau_{\geq 0}\Sigma^{-m+1}_dt_{d+1}\ar[r]&\cdots\ar[r]&\tau_{\geq 0}\Sigma^{-m+1}_dt_0\ar[r]&}\nonumber\\
\xymatrix{\cdots\ar[r]&}\nonumber\\
\xymatrix{\tau_{\geq 0}t_{d+1}\ar[r]&\cdots\ar[r]&\tau_{\geq 0}t_2\ar[r]&t_1\ar[r]^{\varphi}&t_0}\label{eq 16.4 P30}
\end{gather}
provides a $(d+m(d+2))$-kernel of $\varphi$ in $\cT_{[0,m]}$, but we have $n=d+m(d+2)$.\\
\ref{d-ab axiom 2}. In the $(d+2)$-angle \eqref{eq 16.1 P29} assume that $\varphi:t^0\rightarrow t^1$ is monic in $\cT_{[0,m]}$. We have $t^i\in\cT_{[0,m+1]}$ for each $i$, so we can write $t^i=a^i\oplus g^i$ with $a^i\in \fa_0$, $g^i\in\cT_{[1,m+1]}$. Set-up \ref{setup 1 page 1} then implies that \eqref{eq 16.1 P29} has the form
\begin{align}\label{eq 16.5 P32}
\xymatrixcolsep{1.5pc}\xymatrix{a^{0}\oplus g^{0}\ar[rr]^-{\tiny\begin{pmatrix}
\alpha^{0}&0\\\varphi^{0}&\gamma^{0}
\end{pmatrix}}&&a^{1}\oplus g^{1}\ar[rr]^-{\tiny\begin{pmatrix}
\alpha^{1}&0\\\varphi^{1}&\gamma^{1}
\end{pmatrix}}&&a^{2}\oplus g^{2}\ar[rr]^-{\tiny\begin{pmatrix}
\alpha^{2}&0\\\varphi^{2}&\gamma^{2}
\end{pmatrix}}&&\cdots\ar[rr]^-{\tiny\begin{pmatrix}
\alpha^{d}&0\\\varphi^{d}&\gamma^{d}
\end{pmatrix}}&&a^{d+1}\oplus g^{d+1}\ar[rr]^-{\tiny\begin{pmatrix}
\alpha^{d+1}&\psi\\0&\chi
\end{pmatrix}}&&\Sigma_da^{0}\oplus \Sigma_dg^{0}}
\end{align}
where $\varphi=\tiny\begin{pmatrix}
\alpha^0&0\\\varphi^0&\gamma^0
\end{pmatrix}$.
Set-up \ref{setup 1 page 1} also implies
\begin{align*}
\Hom_{ \cT }(g^{i+1},a^i)=0\quad\text{for}\,i=0,\ldots,d,\quad \Hom_{ \cT }(g^0,\Sigma^{-1}_da^{d+1})=0.
\end{align*}
Finally, in
\begin{align*}
\xymatrixcolsep{2pc}\xymatrixrowsep{2pc}\xymatrix{\Sigma^{-1}_dg^{d+1}\ar[ddrrrr]^-{\tiny\begin{pmatrix}
\Sigma^{-1}_d\psi\\\Sigma^{-1}_d\chi
\end{pmatrix}}\ar[dd]_-{\tiny\begin{pmatrix}
0\\1
\end{pmatrix}}&&&&&&&\\\\
\Sigma^{-1}_da^{d+1}\oplus\Sigma^{-1}_dg^{d+1}\ar[rrrr]_-{\tiny\begin{pmatrix}
\Sigma^{-1}_d\alpha^{d+1}&\Sigma^{-1}_d\psi\\0&\Sigma^{-1}_d\chi
\end{pmatrix}}&&&&a^0\oplus g^0\ar[rrr]^-{\tiny\begin{pmatrix}
\alpha^0&0\\\varphi^0&\gamma^0
\end{pmatrix}}\ar@{=}[dd]&&&a^1\oplus g^1\ar@{=}[dd]\\\\
&&&&t^0\ar[rrr]_-{\tiny\begin{matrix}\varphi\end{matrix}}&&&t^1}
\end{align*}
the horizontal composition is zero so the diagram shows
\begin{align*}
\varphi\circ\begin{pmatrix}
\Sigma^{-1}_d\psi\\\Sigma^{-1}_d\chi
\end{pmatrix}=0.
\end{align*}
Since $\Sigma^{-1}_dg^{d+1}\in\cT_{[0,m]}$, this shows $\small{\begin{pmatrix}
\Sigma^{-1}_d\psi\\\Sigma^{-1}_d\chi
\end{pmatrix}}=0$ since $\varphi$ is monic in $\cT_{[0,m]}$, so the connecting morphism in \eqref{eq 16.5 P32} is in fact $\begin{pmatrix}
\alpha^{d+1}&0\\0&0
\end{pmatrix}$.\\
So \eqref{eq 16.5 P32} falls under Set-up \ref{set up p 8} whence Lemma \ref{lemma 11 P14} says that it is a direct sum of the $(d+2)$-angles
\begin{align*}
\xymatrix{a^{0}\ar[rr]^{\alpha^{0}}&&a^{1}\ar[rr]^{\alpha^{1}}&&\cdots\ar[rr]^{\alpha^{d-1}}&&a^{d}\ar[rr]^{\alpha^{d}}&&a^{d+1}\ar[r]^{\alpha^{d+1}}&\Sigma_da^0, \\
g^0\ar[rr]^{\gamma^{0}}&&g^1\ar[rr]^{\gamma^{1}}&&\cdots\ar[rr]^{\gamma^{d-1}}&&g^d\ar[rr]^{\gamma^{d}}&&g^{d+1}\ar[r]^0&\Sigma_dg^0,}
\end{align*}
where the latter is null homotopic in the sense of Lemma \ref{lemma 10 of P11}. Hence \eqref{eq 16.2} is the sum of two diagrams.
The first summand is 
\begin{gather}
\xymatrix{\tau_{\leq m}a^0\ar[r]&\tau_{\leq m}a^{1}\ar[r]&\cdots\ar[r]&\tau_{\leq m}a^{d+1}\ar[r]&}\nonumber\\
\xymatrix{\tau_{\leq m}\Sigma_da^0\ar[r]&\tau_{\leq m}\Sigma_da^1\ar[r]&\cdots\ar[r]&\tau_{\leq m}\Sigma_da^{d+1}\ar[r]&}\nonumber\\
\xymatrix{\cdots\ar[r]&}\nonumber\\
\xymatrix{\tau_{\leq m}\Sigma^m_da^0\ar[r]&\cdots\ar[r]&\tau_{\leq m}\Sigma^m_da^{d+1}}
\end{gather}
which by $a^i\in \fa_0$ can also be written
\begin{gather}
\xymatrix{a^0\ar[r]&a^{1}\ar[r]&\cdots\ar[r]&a^{d+1}\ar[r]&}\nonumber\\
\xymatrix{\Sigma_da^0\ar[r]&\Sigma_da^1\ar[r]&\cdots\ar[r]&\Sigma_da^{d+1}\ar[r]&}\nonumber\\
\xymatrix{\cdots\ar[r]&}\nonumber\\
\xymatrix{\Sigma^m_da^0\ar[r]&\cdots\ar[r]&\Sigma^m_da^{d+1}.}
\end{gather}
The second summand is a concatenation by zero morphisms of pieces which are null homotopic, since this property is preserved by the additive functor $\tau_{\leq m}$.\\
Both summands are sent to exact sequences by $\Hom_{ \cT }(t,-)$ for $t\in\cT_{[0,m]}$. For the first summand, this is clear. For the second summand, it follows since the additive functor $\Hom_{ \cT }(t,-)$ preserves the property of being a concatenation by zero morphisms of pieces which are null homotopic.\\
So \eqref{eq 16.2} is sent to an exact sequence by $\Hom_{ \cT }(t,-)$ for $t\in\cT_{[0,m]}$. Moreover, the first morphism of \eqref{eq 16.2} is the $\cT_{[0,m]}$-monic $\varphi:t^0\rightarrow t^1$; it is sent to a monomorphism by $\Hom_{ \cT }(t,-)$ for $t\in\cT_{[0,m]}$. This proves that \eqref{eq 16.2} provides an $n$-kernel of its last morphism. We already knew from [\ref{d-ab axiom 1} for $n$-cokernels] that it provides an $n$-cokernel of its first morphism. Hence \eqref{eq 16.2} is $n$-exact proving \ref{d-ab axiom 2}.\\
\ref{d-ab axiom 2 op}. In the $(d+2)$-angle \eqref{eq 16.3 P30} assume that $\varphi:t_1\rightarrow t_0$ is epic in $\cT_{[0,m]}$. We have $t_i\in\cT_{[-1,m]}$ for each $i$, so we can write $t_i=b_i\oplus h_i$ with $b_i\in\Sigma^m_d A_0$, $h_i\in\cT_{[-1,m-1]}$. Set-up \ref{setup 1 page 1} then implies that \eqref{eq 16.3 P30} has the form
\begin{align}\label{eq 16.6 P36}
\xymatrix{\Sigma^{-1}_db_0\oplus\Sigma^{-1}_dh_0 \ar[rr]^-{\tiny\begin{pmatrix}
\Sigma^{-1}_d\beta_0&0\\\theta&\chi
\end{pmatrix}}&&b_{d+1}\oplus h_{d+1}\ar[rr]^-{\tiny\begin{pmatrix}
\beta_{d+1}&\psi_{d+1}\\0&\eta_{d+1}
\end{pmatrix}}&&b_d\oplus h_d\ar[rr]^-{\tiny\begin{pmatrix}
\beta_d&\psi_d\\0&\eta_d
\end{pmatrix}}&&\cdots\ar[rr]^-{\tiny\begin{pmatrix}
\beta_{1}&\psi_{1}\\0&\eta_{1}
\end{pmatrix}}&&b_0\oplus h_0}
\end{align}
where $\varphi=\begin{pmatrix}
\beta_{1}&\psi_{1}\\0&\eta_{1}
\end{pmatrix} $.
Set-up \ref{setup 1 page 1} also implies
\begin{align*}
\Hom_{ \cT }(b_i,h_{i+1})=0\quad\text{for}\,i=0,\ldots,d,\quad \Hom_{ \cT }(\Sigma_db_{d+1},h_0)=0.
\end{align*}
Finally, in
\begin{align*}
\xymatrixcolsep{2pc}\xymatrixrowsep{2pc}
\xymatrix{t_1\ar[rrr]^-{\tiny\begin{matrix}\varphi\end{matrix}}\ar@{=}[dd]&&&t_0\ar@{=}[dd]&&&&\\\\
b_1\oplus h_1\ar[rrr]^-{\tiny\begin{pmatrix}
\beta_{1}&\psi_{1}\\0&\eta_{1}
\end{pmatrix} }&&&b_0\oplus h_0\ar[rrrr]^-{\tiny\begin{pmatrix}
-\beta_{0}&0\\-\Sigma_d\theta&-\Sigma_d\chi
\end{pmatrix} }\ar[ddrrrr]|-{\tiny\begin{pmatrix}
-\Sigma_d\theta&-\Sigma_d\chi
\end{pmatrix} }&&&&\Sigma_db_{d+1}\oplus\Sigma_dh_{d+1}\ar[dd]^-{\tiny\begin{pmatrix}
0&1
\end{pmatrix} }\\\\
&&&&&&&\Sigma_dh_{d+1}}
\end{align*}
the horizontal composition is zero so the diagram shows
\begin{align*}
\begin{pmatrix}
\Sigma^{-1}_d\theta&\Sigma^{-1}_d\chi
\end{pmatrix}\circ\varphi=0.
\end{align*}
Since $\Sigma_dh_{d+1}\in\cT_{[0,m]}$, this shows $\small{\begin{pmatrix}
\Sigma^{-1}_d\theta&\Sigma^{-1}_d\chi
\end{pmatrix}}=0$ since $\varphi$ is epic in $\cT_{[0,m]}$, so the connecting morphism in \eqref{eq 16.6 P36} is in fact $\begin{pmatrix}
\Sigma^{-1}_d\beta_0&0\\0&0
\end{pmatrix}$.\\
Hence Lemma \ref{lemma 12 of P20} says that \eqref{eq 16.6 P36} is the direct sum of the $(d+2)$-angles
\begin{align*}
\xymatrix{\Sigma^{-1}_db_0\ar[r]^{\Sigma^{-1}_d\beta_0}&b_{d+1}\ar[r]^{\beta_{d+1}}&b_d\ar[r]^{\beta_d}&\cdots\ar[r]^{\beta_1}&b_0,\\
\Sigma^{-1}_dh_0\ar[r]^0&h_{d+1}\ar[r]^{\eta_{d+1}}&h_d\ar[r]^{\eta_d}&\cdots\ar[r]^{\eta_1}&h_0}
\end{align*}
where the latter is null homotopic in the sense stated after Lemma \ref{lemma 12 of P20}. Hence \eqref{eq 16.4 P30} is the sum of two diagrams.
The first summand is 

\begin{gather*}
\xymatrix{\tau_{\geq 0}\Sigma^{-m}_db_{d+1}\ar[r]&\cdots\ar[r]&\tau_{\geq 0}\Sigma^{-m}_db_0\ar[r]&}\nonumber\\
\xymatrix{\tau_{\geq 0}\Sigma^{-m+1}_db_{d+1}\ar[r]&\cdots\ar[r]&\tau_{\geq 0}\Sigma^{-m+1}_db_0\ar[r]&}\nonumber\\
\xymatrix{\cdots\ar[r]&}\\
\xymatrix{\tau_{\geq 0}b_{d+1}\ar[r]&\cdots\ar[r]&\tau_{\geq 0}b_0,}
\end{gather*}
which by $b_i\in\Sigma^m_d\fa_0$ can also be written
\begin{gather*}
\xymatrix{\Sigma^{-m}_db_{d+1}\ar[r]&\cdots\ar[r]&\Sigma^{-m}_db_0\ar[r]&}\nonumber\\
\xymatrix{\Sigma^{-m+1}_db_{d+1}\ar[r]&\cdots\ar[r]&\Sigma^{-m+1}_db_0\ar[r]&}\nonumber\\
\xymatrix{\cdots\ar[r]&}\\
\xymatrix{b_{d+1}\ar[r]&\cdots\ar[r]&b_0.}
\end{gather*}

The second summand is a concatenation by zero morphisms of pieces which are null homotopic, since this property is preserved by the additive functor $\tau_{\geq m}$.\\
Both summands are sent to exact sequences by $\Hom_{ \cT }(-,t)$ for $t\in\cT_{[0,m]}$. For the first summand, this is clear. For the second summand, it follows since the additive functor $\Hom_{ \cT }(-,t)$ preserves the property of being a concatenation by zero morphisms of pieces which are null homotopic.\\
So \eqref{eq 16.4 P30} is sent to an exact sequence by $\Hom_{ \cT }(-,t)$ for $t\in\cT_{[0,m]}$. Moreover, the last morphism of \eqref{eq 16.4 P30} is the $\cT_{[0,m]}$-epic $\varphi:t_1\rightarrow t_0$; it is sent to a monomorphism by $\Hom_{ \cT }(-,t)$ for $t\in\cT_{[0,m]}$. This proves that \eqref{eq 16.4 P30} provides an $n$-cokernel of its first morphism. We already knew from [\ref{d-ab axiom 1} for $n$-kernels] that it provides an $n$-kernel of its last morphism. Hence \eqref{eq 16.4 P30} is $n$-exact proving \ref{d-ab axiom 2 op}.

\end{proof}

Here we restate Theorem \ref{thm higher case B, rep finite}:
\begin{theorem}
Let $\cM$ be an $d$-cluster tilting subcategory of $\modd A$ where $A$ is $d$-representation finite algebra. Then, the full subcategory of $D^b(\modd A)$ which is generated by $\bigoplus^{m}_{j=0}\cM[jd]$ is an $n$-abelian category where $n=(d+2)(m+1)-2$. 
\end{theorem}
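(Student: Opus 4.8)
The strategy is to deduce this statement from Theorem \ref{thm higher case B most general} by exhibiting $\cM[d\Zz]$ as an instance of the abstract set-up. Set $\cT := \cM[d\Zz] = \add\{\,M[\ell d]\mid \ell\in\Zz\,\}$, viewed inside $D^b(\modd A)$, with $d$-suspension functor $\Sigma_d := [d]$, and set $\fa_j := \cM[jd] = \add(M[jd])$ for $j\in\Zz$. With these identifications $\cT_{[0,m]} = \add(\fa_0,\ldots,\fa_m) = \add\bigoplus_{j=0}^m \cM[jd]$ is precisely the subcategory in the statement, and $(m+1)(d+2)-2$ is exactly the integer $n$ produced by Theorem \ref{thm higher case B most general}. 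Thus it suffices to check that $\cT$ and the family $\{\fa_j\}_{j\in\Zz}$ satisfy all the hypotheses of Definition \ref{def truncated subcat in general} and Set-up \ref{set up 15 P28}.

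Most of these are immediate. That $\cT$ is $(d+2)$-angulated with $d$-suspension $[d]$ is recalled in \S\ref{subsec:Higher} and is due to \cite{geiss2013n, iyama2011clusterMAINpaper}; that $\fa_0 = \cM$ has split idempotents (Set-up \ref{set up 15 P28}\ref{set up 15 b}) and that every object of $\cT$ decomposes as in Definition \ref{def truncated subcat in general}(b) both follow from the Krull--Schmidt property of $D^b(\modd A)$ together with the description $\fa_j = \add(M[jd])$; and $\fa_j = \Sigma_d^j\fa_0$ holds by construction. The $\Hom$-vanishing of Definition \ref{def truncated subcat in general}(a) is where $d$-representation finiteness enters: for $X,Y\in\cM$ we have $\Hom_\cT(X[md],Y[nd]) = \Ext_A^{(n-m)d}(X,Y)$, and since $\cM$ is $d$-rigid and $\gldim A\le d$, this group vanishes for $0<(n-m)d<d$, for $(n-m)d>d$, and for $(n-m)d<0$; as $(n-m)d$ is a multiple of $d$, the only surviving cases are $(n-m)d\in\{0,d\}$, that is $n\in\{m,m+1\}$.

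The one substantial hypothesis---and the main obstacle---is the closure of $\cT_{[0,m+1]}$ under $d$-extensions demanded by Set-up \ref{set up 15 P28}\ref{set up 15 d}. Since $\cT$ is already $(d+2)$-angulated, any morphism $\delta\colon t^{d+1}\to\Sigma_d t^0$ with $t^0,t^{d+1}\in\cT_{[0,m+1]}$ completes to a $(d+2)$-angle $t^0\to t^1\to\cdots\to t^{d+1}\xrightarrow{\delta}\Sigma_d t^0$ whose terms lie in $\cT$; what must be shown is that the middle terms $t^1,\ldots,t^d$ lie in the subcategory $\cT_{[0,m+1]}$. I would translate this into a statement about homological support for the standard $t$-structure on $D^b(\modd A)$. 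Writing $H_*$ for its homology, every object of $\cM[d\Zz]$ has homology supported on multiples of $d$; combined with the uniqueness of the $\fa_j$-decomposition (Lemma \ref{lemma 4 p4}), such an object lies in $\cT_{[0,m+1]}$ if and only if $H_s = 0$ for $s<0$ and for $s>(m+1)d$. In particular both endpoints $t^0$ and $t^{d+1}$ have homology inside $[0,(m+1)d]$.

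To bound the middle terms I would realise the $(d+2)$-angle as a Postnikov system of $d$ distinguished triangles in $D^b(\modd A)$, with auxiliary cones $C_1,\ldots,C_{d-1}$, and run the associated long exact homology sequences starting from the two endpoints. This confines the homology of each $C_i$, and hence of each $t^i$, to the window $[0,(m+2)d-1]$ cut out by $t^0$, $t^{d+1}$ and the connecting morphism. The decisive point---where I expect the real work to lie---is the upper end: an ordinary cone can overshoot $(m+1)d$, and indeed the wrap-around term $\Sigma_d t^0=t^0[d]$ carries homology up to degree $(m+2)d$. The $d$-angulated setting saves the day precisely because each genuine term $t^i$ lies in $\cM[d\Zz]$ and so has homology only in multiples of $d$: since the Postnikov bound keeps this homology strictly below $(m+2)d$, the largest admissible multiple of $d$ is $(m+1)d$, forcing $H_s(t^i)=0$ for $s>(m+1)d$. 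Together with the (easier) lower bound this gives $t^i\in\cT_{[0,m+1]}$, establishing closure under $d$-extensions. With every hypothesis of Theorem \ref{thm higher case B most general} verified, that theorem yields that $\cT_{[0,m]}=\add\bigoplus_{j=0}^m\cM[jd]$ is $n$-abelian with $n=(m+1)(d+2)-2$, as claimed.
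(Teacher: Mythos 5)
Your overall route is the same as the paper's: reduce to Theorem \ref{thm higher case B most general} by checking Set-ups \ref{setup 1 page 1} and \ref{set up 15 P28} for $\fa_j=\cM[jd]$, with the only substantial hypothesis being closure of $\cT_{[0,m+1]}$ under $d$-extensions. Your verification of the easy hypotheses is fine (and your $\Ext$-computation for the $\Hom$-vanishing is more explicit than the paper's). But your treatment of the decisive hypothesis has a genuine gap. First, a completion of $\delta$ to a $(d+2)$-angle is not unique, and an arbitrary completion need \emph{not} have middle terms in $\cT_{[0,m+1]}$ (one can always inflate two consecutive terms by a trivial summand $x\xrightarrow{\sim}x$ with $x\in\cT$ arbitrary); so you must construct a \emph{specific} completion, which in this setting means the Geiss--Keller--Oppermann tower whose genuine terms $t^i$ are $\cT$-envelopes of the auxiliary cones $x^i$. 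You do pivot to such a Postnikov tower, but then the step ``this confines the homology of each $C_i$, and hence of each $t^i$'' is exactly where the argument fails to close: in the triangle $x^i\xrightarrow{\xi^i}t^i\to x^{i+1}\to \Sigma x^i$ the long exact homology sequence bounds $H_*t^i$ only if you already control $H_*x^{i+1}$, and it bounds $H_*x^{i+1}$ only if you already control $H_*t^i$. The long exact sequences alone cannot break this circularity, because $t^i$ is an envelope chosen from outside the triangle, not determined by $x^i$.

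What breaks the circularity in the paper is a chain of non-formal facts about $\cT$-envelopes, proved in Lemmas \ref{lemma 2 wide p 2}--\ref{lemma 5 wide p 9} using $\gldim A\le d$ and the module-theoretic structure of $\fa$-envelopes: if $H_*x$ is concentrated in degrees $\le 0$ then so is $H_*$ of its $\cT$-envelope target and the top homology map $H_0\xi$ is \emph{monic} (Lemma \ref{lemma 3 wide p 4}); a dual lower bound (Lemma \ref{lemma 4 wide p 8}); and the monicity is what prevents the cone's homology from creeping up by one degree at each of the $d$ steps of the tower (Lemma \ref{lemma 5 wide p 9}). Your divisibility-by-$d$ observation is correct and is indeed used in the paper, but only for the \emph{last} term $x^d=t^d$, which is automatically in $\cT$; for the intermediate $t^i$ the bound comes from the envelope lemmas, not from divisibility. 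So the proposal is structurally right but is missing the actual content of Proposition \ref{prop 6 wide p10 } and the lemmas feeding into it, which is where the paper does the real work.
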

\begin{proof}
It is enough to show that the full subcategory of $D^b(\modd A)$ which is generated by $\bigoplus^{m}_{j=0}\cM[jd]$ has the properties stated in Set-ups \ref{setup 1 page 1} and \ref{set up 15 P28} if we set $\fa_j = \cM[ jd ]$.  Set-up \ref{setup 1 page 1} and Set-up \ref{set up 15 P28}, parts \ref{set up 15 b} and \ref{set up 15 c} are clearly satisfied, and Set-up \ref{set up 15 P28}, part \ref{set up 15 a} holds by \cite[Thm 1]{geiss2013n}.  We will show in Proposition \ref{prop 6 wide p10 } that Set-up \ref{set up 15 P28}, part \ref{set up 15 d} is satisfied.  Therefore, by Theorem \ref{THM 16 P29 } the claim holds.
\end{proof}

\subsection{From abelian to $d$-abelian categories}\label{subsect set up 4 proof of thm A}

\begin{setup}\label{setup 17 P40} Let $\cC$ be an abelian category. We define the (bounded) derived category as in \cite[Def. 13.1.2]{KS06}, observing that $D^b(\cC)$ is indeed the full subcategory of $D(\cC)$ of complexes with bounded homology by \cite[Prop 13.1.12(i)]{KS06}.\\
For $c',c''\in\cC$ we define
\begin{align*}
\Ext^i_{\cC}(c'',c')=\Hom_{D(\cC)}(c'',\Sigma^ic')=\Hom_{D^b(\cC)}(c'',\Sigma^ic'')
\end{align*}
as in \cite[p.322 not. 3.1.9]{KS06}.\\
Assume that $\cC$ is hereditary in the sense of \cite[p.\ 324, def 13.1.18]{KS06}. Set $\cT=D^b(\cC)$, $\fa_m=\Sigma^{m}\cC$. Note that by \cite[cor 13.1.20]{KS06},
\begin{align}\label{eq 17.1 p41}
\cT_{[i,s]}=\left\{t\in D^b(\cC)\,\vert\,H_{\ast}t\,\text{is concentrated in degrees}\, i,\ldots,s\right\}.
\end{align}
\end{setup}
Now all assumptions in Set-ups \ref{setup 1 page 1} and \ref{set up 15 P28} are satisfied with $d=1$:
\begin{enumerate}[label=\arabic*)]
\item \ref{setup 1 page 1} \ref{set up 1 a} By definition of hereditary category.
\item \ref{setup 1 page 1} \ref{set up 1 b}  By \eqref{eq 17.1 p41}.
\item \ref{set up 15 P28} \ref{set up 15 a} By \cite[p.\ 319]{KS06}
\item \ref{set up 15 P28} \ref{set up 15 b} Well known for abelian categories.
\item \ref{set up 15 P28} \ref{set up 15 c} By definition.
\item \ref{set up 15 P28} \ref{set up 15 d} By \eqref{eq 17.1 p41} and the long exact sequence.
\end{enumerate}
\begin{theorem}
If $\cC$ is a hereditary abelian category, then
\begin{align*}
\left\{t\in D^b(\cC)\,\vert\,H_{\ast}t\;\text{is concentrated in degrees}\; 0,\ldots,m\right\}
\end{align*}
is a $(3m+1)$-abelian category.
\end{theorem}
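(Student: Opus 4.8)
The plan is to obtain this statement as the special case $d = 1$ of Theorem \ref{THM 16 P29 } (the restatement of Theorem \ref{thm higher case B most general}). Since $D^b(\cC)$ is triangulated, it is $(d+2)$-angulated for $d = 1$ with $1$-suspension $\Sigma_1 = \Sigma = [1]$, a $3$-angle being nothing but a distinguished triangle; for this value of $d$ the abelianness degree $n = (m+1)(d+2) - 2$ collapses to $3(m+1) - 2 = 3m+1$, exactly as wanted. So the whole problem reduces to checking that the data $\cT = D^b(\cC)$, $\fa_m = \Sigma^m\cC$ of Set-up \ref{setup 17 P40} satisfy every hypothesis of Set-up \ref{setup 1 page 1} and Set-up \ref{set up 15 P28}.

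First I would record the structural consequence of heredity supplied by \cite[cor 13.1.20]{KS06}: every object $t \in D^b(\cC)$ is isomorphic to the direct sum $\bigoplus_j \Sigma^j H_j(t)$ of the shifts of its (finitely many nonzero) homology objects. This single fact delivers Set-up \ref{setup 1 page 1}\ref{set up 1 b}, the decomposition $t \cong a_s \oplus \cdots \oplus a_i$ with $a_j \in \fa_j = \Sigma^j\cC$, and simultaneously the identification \eqref{eq 17.1 p41} of $\cT_{[i,s]}$ with the complexes whose homology lies in degrees $i, \ldots, s$. The remaining formal conditions are immediate: $\fa_0 = \cC$ is idempotent complete because $\cC$ is abelian (Set-up \ref{set up 15 P28}\ref{set up 15 b}), $\fa_j = \Sigma^j\fa_0$ holds by definition (Set-up \ref{set up 15 P28}\ref{set up 15 c}), and $\cT$ is $(d+2)$-angulated as noted (Set-up \ref{set up 15 P28}\ref{set up 15 a}).

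The two conditions that genuinely use heredity are Set-up \ref{setup 1 page 1}\ref{set up 1 a} and Set-up \ref{set up 15 P28}\ref{set up 15 d}, where I would concentrate the argument. For \ref{set up 1 a} I would compute $\Hom_{D^b(\cC)}(\Sigma^m c, \Sigma^n c') = \Ext^{n-m}_{\cC}(c, c')$; negative $\Ext$ vanishes automatically, and because $\cC$ is hereditary $\Ext^i_{\cC} = 0$ for $i \geq 2$, so this group can be nonzero only when $n - m \in \{0, 1\}$, which is precisely the required vanishing. For \ref{set up 15 d}, closure of $\cT_{[0,m+1]}$ under $1$-extensions, I would take a morphism $t^2 \to \Sigma t^0$ with $t^0, t^2 \in \cT_{[0,m+1]}$, complete it to a distinguished triangle $t^0 \to t^1 \to t^2 \to \Sigma t^0$ in $D^b(\cC)$, and run the long exact homology sequence $\cdots \to H_j(t^0) \to H_j(t^1) \to H_j(t^2) \to H_{j-1}(t^0) \to \cdots$: outside degrees $0, \ldots, m+1$ the two flanking groups $H_j(t^0)$ and $H_j(t^2)$ vanish, forcing $H_j(t^1) = 0$ there, so $t^1 \in \cT_{[0,m+1]}$.

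With all hypotheses in place, Theorem \ref{THM 16 P29 } applies and shows $\cT_{[0,m]}$ is $(3m+1)$-abelian, and rewriting $\cT_{[0,m]}$ through \eqref{eq 17.1 p41} as the category of bounded complexes with homology in degrees $0, \ldots, m$ concludes the argument. I expect the only point requiring care to be the invocation of \cite[cor 13.1.20]{KS06}: the splitting of a bounded complex into the sum of its shifted homologies is exactly the special feature of hereditary categories that makes Set-up \ref{setup 1 page 1} applicable at all, and without it neither the direct-sum decomposition \ref{set up 1 b} nor the Hom-vanishing \ref{set up 1 a} would hold.
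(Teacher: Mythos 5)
Your proposal is correct and follows essentially the same route as the paper: both reduce the statement to Theorem \ref{THM 16 P29 } with $d=1$ by verifying the conditions of Set-ups \ref{setup 1 page 1} and \ref{set up 15 P28} via the hereditary splitting $t \cong \bigoplus_j \Sigma^j H_j(t)$, the vanishing of $\Ext^{\geq 2}$, and the long exact homology sequence for closure under extensions. Your write-up merely makes explicit the $\Ext$ computation and the long-exact-sequence argument that the paper leaves as one-line citations.
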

\begin{proof}
This follows from the combination of Set-up \ref{setup 17 P40} and Theorem \ref{THM 16 P29 }.
\end{proof}

\subsection{Examples}\label{subsect set up 5 proof of Corollary}

We want to prove each statement in Corollary \ref{thm MAIN COR}. We restate Corollary \ref{thm MAIN COR}:
\begin{corollary}
i) Let $\cC$ be the category of finite dimensional modules over a finite dimensional hereditary algebra which is not of finite representation type. Then, $\cC_{[0,m]}$ is a $(3m+1)$-abelian category with infinitely many indecomposable objects.\vspace{0.1cm}\\
ii) Let $\cC$ be the category of abelian groups. Then, $\cC_{[0,m]}$ is a $(3m+1)$-abelian category which is not $\mathbb{K}$-linear over a field $\mathbb{K}$ but has set indexed products and coproducts. It is $\mathbb{Z}$-linear and it does not have a Serre functor.\vspace{0.1cm}\\
iii) Let $\cC$ be the category of coherent sheaves over the projective line. Then, $\cC_{[0,m]}$ is a $(3m+1)$-abelian category with not enough injectives.\vspace{0.1cm}\\
iv) Let $\cC'$ be a category derived equivalent to a hereditary abelian category $\cC$, that is there is an equivalence of triangulated categories $\cF : D^b(\cC) \xrightarrow{} D^b(\cC')$.  Then, $\cF(\cC_{[0,m]})$ is a $(3m+1)$-abelian category.
\end{corollary}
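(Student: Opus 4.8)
The plan is to deduce all four parts from Theorem~\ref{thm hereditary implies d-abelian}: in each case the underlying category $\cC$ is hereditary abelian, so the fact that $\cC_{[0,m]}$ is $(3m+1)$-abelian is immediate, and the work in each part lies entirely in the additional structural claims. For part (i), the module category of a finite dimensional hereditary algebra $A$ has $\gldim A \leq 1$, hence $\Ext^i_{\cC}(-,-)=0$ for $i \geq 2$, so $\cC$ is hereditary and Theorem~\ref{thm hereditary implies d-abelian} applies. For the indecomposables I would place $\modd A$ inside $\cC_{[0,m]}$ as the degree-zero layer $\fa_0$; for $X \in \modd A$ one has $\End_{\cC_{[0,m]}}(X) = \Hom_{D^b(\cC)}(X,X) = \End_{\cC}(X)$, so indecomposability (locality of the endomorphism ring) is preserved and distinct indecomposables of $\modd A$ remain distinct and indecomposable in $\cC_{[0,m]}$. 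Infinite representation type supplies infinitely many such $X$, giving the claim. For part (ii), the category of abelian groups is $\modd\Zz$, which is hereditary since $\Zz$ is a PID, so the $(3m+1)$-abelian conclusion is again immediate, and $\Zz$-linearity holds by construction.

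The substantive assertions in (ii) are the failure of $\mathbb{K}$-linearity, the existence of set-indexed products and coproducts, and the absence of a Serre functor. To rule out $\mathbb{K}$-linearity I would compute $\End_{\cC_{[0,m]}}(\Zz) = \End_{\modd\Zz}(\Zz) = \Zz$: a $\mathbb{K}$-linear structure on the category would force every endomorphism ring to be a $\mathbb{K}$-algebra, but $\Zz$ contains no field, a contradiction. For set-indexed (co)products I would use that over a hereditary category every object of $D^b(\cC)$ splits as the direct sum of its shifted homologies, so for a set-indexed family $\{X_i\}$ in $\cC_{[0,m]}$ the product and coproduct computed in $D(\modd\Zz)$ are degreewise; since products and coproducts are exact in $\modd\Zz$, the homology of the product (resp. coproduct) is the product (resp. direct sum) of the homologies, still concentrated in degrees $0,\ldots,m$. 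Hence these (co)products lie in $\cC_{[0,m]}$, and because $\cC_{[0,m]}$ is a full subcategory they satisfy the universal property there. Finally, a Serre functor presupposes a Hom-finite $\mathbb{K}$-linear category; as $\cC_{[0,m]}$ is neither $\mathbb{K}$-linear nor Hom-finite (already $\End(\Zz)=\Zz$ is infinite), no Serre functor exists.

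Part (iii) is where the real difficulty lies. The category of coherent sheaves on $\Pp^1$ has global dimension one, hence is hereditary, so Theorem~\ref{thm hereditary implies d-abelian} again supplies the $(3m+1)$-abelian structure. The delicate point is the failure of enough injectives, interpreted in the $d$-abelian sense as the nonexistence, for some object, of an admissible monomorphism into an injective object of $\cC_{[0,m]}$. The plan is to relate injectivity in $\cC_{[0,m]}$ back to $\cC$ via the truncation adjunctions $\tau_{\geq i}$ and $\tau_{\leq s}$ of Lemmas~\ref{lemma 6 p6} and~\ref{lemma 7 p7}: an injective object of $\cC_{[0,m]}$ should, after truncation onto the degree-zero layer $\fa_0 = \cC$, produce an injective of $\cC$, so that enough injectives in $\cC_{[0,m]}$ would descend to enough injectives in $\cC$. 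Since the category of coherent sheaves on $\Pp^1$ classically lacks enough injectives (its injectives are quasicoherent but not coherent), this yields the contradiction. Verifying that the truncation of an injective is injective, and that admissible monomorphisms restrict correctly along the adjunctions, is the principal technical obstacle and the step I expect to require the most care.

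Part (iv) is formal. Being $d$-abelian is a property of an additive category phrased entirely in terms of morphisms, $d$-kernels, $d$-cokernels, and $d$-exactness, so it is invariant under additive equivalences. The triangle equivalence $\cF : D^b(\cC) \to D^b(\cC')$ restricts to an equivalence of additive categories $\cC_{[0,m]} \xrightarrow{\sim} \cF(\cC_{[0,m]})$, and $\cC_{[0,m]}$ is $(3m+1)$-abelian by Theorem~\ref{thm hereditary implies d-abelian}; hence so is $\cF(\cC_{[0,m]})$. No further hypotheses on $\cC'$ are needed, since the structure is transported along $\cF$ rather than re-derived. Overall, the only genuinely nontrivial argument is the injectivity reduction in part (iii); the remaining parts are either direct invocations of Theorem~\ref{thm hereditary implies d-abelian} or elementary computations with endomorphism rings and with (co)products in the hereditary derived category.
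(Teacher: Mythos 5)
Your proposal follows the same route as the paper: every part is Theorem~\ref{thm hereditary implies d-abelian} plus an elementary observation, and your arguments for (i), (ii) and (iv) coincide with the paper's (the paper's proof of (iv) is exactly your last paragraph, and for (ii) it likewise transports $\mathbb{Z}$-linearity and set-indexed (co)products from $\Modd\mathbb{Z}$ and notes that the failure of $\mathbb{K}$-linearity rules out a Serre functor). The only divergence is in (iii), where the paper argues object-by-object: it fixes an $X\in\operatorname{coh}(\Pp^1)$ with no injective preenvelope in $\cC$ and asserts that $X$ then has no injective preenvelope in $\cC_{[0,m]}$, rather than proving your global statement that enough injectives would descend along the truncation functors. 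The verification you flag as the ``principal technical obstacle'' is real but short, and it is precisely the point the paper also leaves implicit: since $\Hom_{D^b(\cC)}(\Sigma^j c, c')\cong\Ext^{-j}_{\cC}(c,c')=0$ for $j\geq 1$ and $c,c'\in\cC$, a monomorphism of $\cC$ remains a monomorphism in $\cC_{[0,m]}$, so the degree-zero summand of an injective object of $\cC_{[0,m]}$ is injective in $\cC$, and any morphism from $X\in\cC$ to an injective of $\cC_{[0,m]}$ factors through that degree-zero summand; either your formulation or the paper's then contradicts the classical fact that $\operatorname{coh}(\Pp^1)$ lacks enough injectives. So your plan is sound; you should simply carry out this Hom-vanishing computation rather than leaving it as an anticipated difficulty.
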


\begin{proof} In each part, Theorem \ref{thm hereditary implies d-abelian} implies that $\cC_{[0,m]}$ is $(3m+1)$-abelian.
i) Since $\cC$ has infinitely many indecomposable objects, $\cC_{[0,m]}$ does as well.\\
ii) In this case, $\cC\cong\Modd\mathbb{Z}$. Since $\Modd\mathbb{Z}$ is a $\mathbb{Z}$-linear category having both products and coproducts, $(\Modd\mathbb{Z})_{[0,m]}$ has the same properties. It is clear that $(\Modd\mathbb{Z})_{[0,m]}$ is not $\mathbb{K}$-linear, hence does not have Serre functor.\\ 
iii) The category $\cC={\rm coh}(\mathbb{P}^1)$ of coherent sheaves on $\mathbb{P}^1$ is a hereditary abelian category without enough injectives. An object $X\in\cC$ without an injective preenvelope in $\cC$ does not have an injective preenvelope in $\cC_{[0,m]}$.\\
iv) The functor $\cF : D^b(\cC) \xrightarrow{} D^b(\cC')$ restricts to an equivalence $\cC_{[0,m]} \xrightarrow{} \cF( \cC_{[0,m]} )$.
\end{proof}

\begin{remarks}
i) It follows from \cite[thm.\ 3.3]{EN20} that $d$-abelian categories with set indexed products and coproducts exists in $\text{Mod}$-$A$ when $A$ is a $d$-representation finite algebra. Here we obtain this property based on the category of $\mathbb{Z}$-modules, which is not $\mathbb{K}$-linear. \\
ii) We point out $\cF(\cC_{[0,m]})$ is not necessarily of the form $\cC'_{[0,m]}$.
\end{remarks}

\section{Only hereditary abelian categories work}\label{section d-abelian to actual category}
We provide proof of theorem \ref{thm d-abelian implies hereditary A'}.
Let $\cC$ be a small abelian category, $m\geq 0$ an integer. We set $n=3m+1$. 
\begin{proposition}
$\cC_{[0,m]}$ satisfies Definition \ref{def d-abelian}, \ref{d-ab axiom 0}, i.e., it has split idempotents.
\end{proposition}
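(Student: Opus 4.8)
The plan is to obtain Axiom~\ref{d-ab axiom 0} for $\cC_{[0,m]}$ as a consequence of idempotent completeness of the ambient triangulated category $D^b(\cC)$, together with the observation that $\cC_{[0,m]}$ is closed under direct summands. Recall that being idempotent complete (Karoubian) means precisely that every idempotent endomorphism splits, so it suffices to split an arbitrary idempotent $e\colon X\to X$ with $X\in\cC_{[0,m]}$.

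First I would record that $D^b(\cC)$ is idempotent complete. Since $\cC$ is abelian it is itself idempotent complete, the image of an idempotent being a direct summand; hence, by the theorem of Balmer and Schlichting on idempotent completions of bounded derived categories, $D^b(\cC)$ is idempotent complete. Alternatively, one uses here that $\cC$ is small: the Gabriel--Quillen embedding realises $\cC$ as a full exact subcategory of a Grothendieck category $\cG$ in a way that induces a fully faithful exact functor $D^b(\cC)\hookrightarrow D(\cG)$ with image closed under direct summands; as $D(\cG)$ admits countable coproducts it splits idempotents, and therefore so does $D^b(\cC)$.

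Second, I would verify that $\cC_{[0,m]}$ is closed under direct summands inside $D^b(\cC)$. If $X\in\cC_{[0,m]}$ and $X\cong Y\oplus Z$ in $D^b(\cC)$, then each homology functor gives $H_s(X)\cong H_s(Y)\oplus H_s(Z)$; for $s\notin\{0,\dots,m\}$ the left-hand side vanishes, forcing $H_s(Y)=H_s(Z)=0$. Thus $Y$ and $Z$ have homology concentrated in degrees $0,\dots,m$, so $Y,Z\in\cC_{[0,m]}$.

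Combining the two steps finishes the proof: given an idempotent $e\colon X\to X$ in $\cC_{[0,m]}$, regard it as an idempotent in $D^b(\cC)$, where it splits as $e=\iota\pi$ with $\pi\colon X\to Y$, $\iota\colon Y\to X$ and $\pi\iota=\operatorname{id}_Y$, exhibiting $Y$ as a summand of $X$. By the second step $Y\in\cC_{[0,m]}$, and since $\cC_{[0,m]}$ is a full subcategory the factorisation already lives in $\cC_{[0,m]}$; hence $e$ splits there. The only non-formal ingredient---and thus the main point to get right---is the idempotent completeness of $D^b(\cC)$; the closure under summands and the final assembly are routine.
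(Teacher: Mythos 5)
Your proof is correct and follows the same route as the paper: idempotent completeness of $D^b(\cC)$ via the Balmer--Schlichting result, combined with the observation that $\cC_{[0,m]}$ is closed under direct summands in $D^b(\cC)$ because summands are detected on homology. You are in fact slightly more explicit than the paper's own two-line proof, which only remarks that $\cC_{[0,m]}$ is an additive subcategory of $D^b(\cC)$ and leaves the closure-under-summands verification implicit.
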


\begin{proof}
$D^b(\cC)$ has split idempotents by  \cite[cor.\ 2.10]{BS01}. Since $\cC_{[0,m]}$ is an additive subcategory of $D^b(\cC)$, the claim follows.
\end{proof}


\begin{proposition}\label{prop 4}
$\cC_{[0,m]}$ satisfies Definition \ref{def d-abelian}, \ref{d-ab axiom 1}, i.e., it has $n$-kernels and $n$-cokernels.
\end{proposition}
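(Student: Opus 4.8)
The plan is to build the $n$-cokernel and $n$-kernel of an arbitrary morphism directly inside the triangulated category $D^b(\cC)$: cone the morphism, spin the resulting triangle into a long exact sequence, and then cut it down to $\cC_{[0,m]}$ using the standard $t$-structure truncation functors. No heredity is needed, which is consistent with the expectation that Axiom \ref{d-ab axiom 1} holds for every small abelian category (the role of heredity will only enter later, in forcing $n$-exactness). First I would record the tools. Write $\Sigma=[1]$ for the shift, and let $\tau_{\leq m}$ and $\tau_{\geq 0}$ be the truncation functors of the standard $t$-structure on $D^b(\cC)$, whose heart is $\cC$ and whose homology is $H_\ast$. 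By the general theory of $t$-structures (see \cite{KS06}), $\tau_{\leq m}$ is left adjoint to the inclusion $\cC_{\leq m}\hookrightarrow D^b(\cC)$ and $\tau_{\geq 0}$ is right adjoint to $\cC_{\geq 0}\hookrightarrow D^b(\cC)$; these replace Lemmas \ref{lemma 7 p7} and \ref{lemma 6 p6}, which were proved under Set-up \ref{setup 1 page 1} and so do not literally apply here. I would also record the orthogonality $\Hom_{D^b(\cC)}(\cC_{\geq m+1},\cC_{\leq m})=0$.

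For the $n$-cokernel of $\varphi\colon X_0\to X_1$ with $X_0,X_1\in\cC_{[0,m]}$, complete $\varphi$ to a triangle $X_0\xrightarrow{\varphi}X_1\to C\to\Sigma X_0$; the homology long exact sequence gives $C\in\cC_{[0,m+1]}$. Rotating produces the sequence
\[
X_0\xrightarrow{\varphi}X_1\to C\to\Sigma X_0\to\Sigma X_1\to\Sigma C\to\Sigma^2X_0\to\cdots,
\]
which becomes exact under $\Hom_{D^b(\cC)}(-,Y)$ for every $Y$. Applying $\tau_{\leq m}$ to every term from $X_1$ onward and using $\Hom(\tau_{\leq m}W,Y)\cong\Hom(W,Y)$ for $Y\in\cC_{\leq m}$, the truncated sequence has the same $\Hom(-,Y)$-complex and hence remains exact under $\Hom(-,Y)$ for $Y\in\cC_{[0,m]}$. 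Since $\Sigma^{m+1}X_0,\Sigma^{m+1}X_1\in\cC_{\geq m+1}$ and $\Sigma^{m+1}C\in\cC_{\geq m+1}$, these truncations vanish, so the sequence terminates at $\tau_{\leq m}\Sigma^m C$; counting from $X_1$ to $\tau_{\leq m}\Sigma^m C$ gives exactly $3m+2$ objects, i.e.\ a length-$(3m+1)$ sequence, and the orthogonality above yields injectivity at the final spot. This is the required $n$-cokernel.

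The $n$-kernel is dual: complete $\varphi\colon X_1\to X_0$ to a triangle $F\to X_1\xrightarrow{\varphi}X_0\to\Sigma F$ with $F\in\cC_{[-1,m]}$ by the homology long exact sequence, rotate to the left, apply $\tau_{\geq 0}$, and transfer exactness under $\Hom(Z,-)$ for $Z\in\cC_{[0,m]}$ via the right-adjoint property of $\tau_{\geq 0}$. The vanishing $\tau_{\geq 0}\Sigma^{-(m+1)}(-)=0$ on objects of $\cC_{[-1,m]}$ and $\cC_{[0,m]}$ again truncates the length to $3m+1$, and the symmetric orthogonality gives surjectivity at the initial spot.

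The main obstacle is the bookkeeping that upgrades the truncated rotating sequence into an honest $n$-cokernel: one must verify that term-wise application of $\tau_{\leq m}$ yields a sequence of composable morphisms whose $\Hom(-,Y)$-complex is identified \emph{compatibly with differentials} with that of the untruncated sequence, which is exactly where naturality of the adjunction unit $W\to\tau_{\leq m}W$ is used, and then confirm both the length count and the end-(in)jectivity from the orthogonality. Everything else is formal consequence of the triangulated structure and the $t$-structure truncations, with no appeal to heredity.
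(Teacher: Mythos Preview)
Your proposal is correct and follows essentially the same route as the paper's proof: complete to a triangle, rotate, apply the $t$-structure truncation $\tau_{\leq m}$ (resp.\ $\tau_{\geq 0}$), and transfer exactness via the left (resp.\ right) adjoint property, with the vanishing $\tau_{\leq m}\Sigma^{m+1}X_0=0$ providing the cutoff. The paper invokes \cite[Prop.~13.1.16]{KS06} for the adjunction step where you appeal directly to the $t$-structure, and your explicit remark about naturality of the adjunction unit is exactly what underlies the compatibility of differentials.
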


\begin{proof}
Given a morphism $\xi: X\rightarrow Y$ in $\cC_{[0,m]}$, we complete it to a triangle in $D^b(\cC)$, then construct the following diagram where $\Sigma$ is the suspension functor on $D^b(\cC)$.
\begin{align}\label{sect1 eq1}
\xymatrix{X\ar[r]^{\xi}&Y\ar[r]&Z\ar[r]&\Sigma X\ar[r]&\Sigma Y\ar[r]&\Sigma Z\ar[r]&\cdots}
\end{align}
Notice that $\Sigma X\in\cC_{[1,m+1]}$, so the long exact homology sequence implies $Z\in\cC_{[0,m+1]}$. In particular $X,Y,Z\in\cC_{[0,m+1]}$, hence 
\begin{align}\label{sect1 eq2}
\Sigma^iX,\Sigma^iY,\Sigma^iZ\in\cC_{[i,i+m+1]}.
\end{align}
If $U\in D^b(\cC)$, then $\Hom_{D^b(\cC)}(-,U)$ maps \eqref{sect1 eq1} to an exact sequence. Combining with \cite[Prop. 13.1.16]{KS06}, if $V\in\cC_{[0,m]}$, then $\Hom_{D^b(\cC)}(-,V)$ maps
\begin{align}\label{sect1 eq3}
\xymatrix@C=4.5ex{\tau_{\leq m}X\ar[r]^{\tau_{\leq m}\xi}& \tau_{\leq m}Y\ar[r]&\tau_{\leq m}Z\ar[r]& \tau_{\leq m}\Sigma X\ar[r]& \tau_{\leq m}\Sigma Y\ar[r]& \tau_{\leq m}\Sigma Z\ar[r]&\cdots}
\end{align}
to an exact sequence, where $\tau_{ \leq m }$ is the soft truncation functor to homological degrees $\leq m$.  But \eqref{sect1 eq2} implies that \eqref{sect1 eq3} is in $\cC_{[0,m]}$ and has the form
\begin{align}
\xymatrix@C=2.1ex{X\ar[r]^{\xi}& Y\ar[r]& \tau_{\leq m}Z\ar[r]&\cdots\ar[r]& \tau_{\leq m}\Sigma^m X\ar[r]& \tau_{\leq m}\Sigma^m Y\ar[r]& \tau_{\leq m}\Sigma^m Z\ar[r]& 0\ar[r]& 0\ar[r]&\cdots.}
\end{align}
Hence the sequence \eqref{sect1 eq3} provides a $(3m+1)$-cokernel of $\xi:X\rightarrow Y$ in $\cC_{[0,m]}$, i.e., an $n$-cokernel.  The 
dual argument provides an $n$-kernel.
\end{proof}

\begin{proposition}\label{prop 5} Assume $m\geq 1$.
\begin{enumerate}[label=\arabic*)]
\item If $\cC$ has enough injectives and $\cC_{[0,m]}$ satisfies Definition \ref{def d-abelian},
\ref{d-ab axiom 2}, then $\cC$ is a hereditary category.
\item If $\cC$ has enough projectives and $\cC_{[0,m]}$ satisfies Definition \ref{def d-abelian}, \ref{d-ab axiom 2 op}, then $\cC$ is a hereditary category.
\end{enumerate}
\end{proposition}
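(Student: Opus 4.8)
The plan is to prove heredity by first reducing to the single vanishing $\Ext^2_\cC(X,Y)=0$ for all $X,Y\in\cC$, and then bootstrapping to all higher degrees. Recall from Set-up \ref{setup 17 P40} that $\Ext^2_\cC(X,Y)=\Hom_{D^b(\cC)}(X,\Sigma^2 Y)$. In part 1), once we know $\Ext^2_\cC(-,Y)=0$, enough injectives gives $\operatorname{id}Y\le1$ and hence $\Ext^i_\cC(-,Y)=0$ for all $i\ge2$; letting $Y$ vary shows that $\cC$ is hereditary. This step is exactly where the injectivity hypothesis is genuinely used, since $m$ may equal $1$ and then $\cC_{[0,m]}$ cannot directly detect $\Ext^{\ge3}$. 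So it suffices to assume, for contradiction, that $0\ne\epsilon\in\Ext^2_\cC(X,Y)$ for some $X,Y$, and to violate Definition \ref{def d-abelian}, \ref{d-ab axiom 2}.

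Viewing $\epsilon\colon X\to\Sigma^2Y$ in $D^b(\cC)$, I would form the triangle $\Sigma Y\to E\to X\xrightarrow{\epsilon}\Sigma^2 Y$, so that $H_0(E)\cong X$ and $H_1(E)\cong Y$; thus $E\in\cC_{[0,1]}\subseteq\cC_{[0,m]}$ since $m\ge1$. The point is that $f_0\colon\Sigma Y\to E$ is a monomorphism in $\cC_{[0,m]}$: rotating the triangle and applying $\Hom_{\cC_{[0,m]}}(T,-)$, the kernel of $(f_0)_\ast$ is the image of $\Hom_{D^b(\cC)}(T,\Sigma^{-1}X)$, and this group vanishes for every $T\in\cC_{[0,m]}$ because $T$ has homology in degrees $\ge0$ while $\Sigma^{-1}X$ lives in degree $-1$.

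Next I would feed $f_0$ into Proposition \ref{prop 4}: completing $f_0$ to the triangle above (whose cone is $X$), spinning it out to $\Sigma Y\to E\to X\to\Sigma^2Y\to\Sigma E\to\cdots$ and applying $\tau_{\le m}$ yields an explicit $n$-cokernel, hence an $n$-exact candidate $X_0\to\cdots\to X_{n+1}$. The only real work is the truncation bookkeeping, which shows $X_{3m-2}=\Sigma^{m-1}E$, $X_{3m-1}=\Sigma^{m-1}X$, and crucially $X_{3m}=\tau_{\le m}\Sigma^{m+1}Y=0$ because $\Sigma^{m+1}Y$ sits in degree $m+1>m$; consequently the map $X_{3m-1}\to X_{3m}$, which before truncation is $\Sigma^{m-1}\epsilon$, collapses to zero. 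By axiom \ref{d-ab axiom 2} this candidate is $n$-exact, hence an $n$-kernel of its last map, so $\Hom_{\cC_{[0,m]}}(T,-)$ takes it to an exact sequence for every $T$. Exactness at $X_{3m-1}=\Sigma^{m-1}X$, whose outgoing term $X_{3m}$ is zero, forces $\Hom(T,\Sigma^{m-1}E)\to\Hom(T,\Sigma^{m-1}X)$ to be surjective. Taking $T=\Sigma^{m-1}X$ and chasing $\operatorname{id}$ through the triangle shows it lifts only if $(\Sigma^{m-1}\epsilon)_\ast(\operatorname{id})=\Sigma^{m-1}\epsilon$ vanishes, i.e. only if $\epsilon=0$, the desired contradiction.

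Part 2) is the formal dual: replace injectives by projectives and axiom \ref{d-ab axiom 2} by \ref{d-ab axiom 2 op}, and either dualise the argument verbatim or apply part 1) to $\cC^{\mathrm{op}}$, using that heredity is self-dual and that $\cC_{[0,m]}^{\mathrm{op}}$ is again a truncated subcategory of $D^b(\cC^{\mathrm{op}})$. I expect the main obstacle to be precisely the truncation bookkeeping of the third paragraph, namely verifying that the vanishing term falls exactly at position $3m$ and that the neighbouring map is a suspension of $\epsilon$; by contrast the verification that $f_0$ is monic and the $\Ext$-bootstrapping of the first paragraph should be comparatively routine.
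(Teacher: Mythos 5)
Your argument is correct, but it reaches the contradiction by a genuinely different route from the paper's. The paper also reduces (implicitly) to a nonzero class in $\Ext^2_\cC$, written as a Yoneda product $\Sigma(\gamma')\circ\gamma_0$ of two $1$-extension classes $\gamma'\colon C'\to\Sigma C_1$ and $\gamma_0\colon C_0\to\Sigma C'$; it then embeds $C'$ into an injective $I$ and uses the octahedral axiom to manufacture the monomorphism $\left(\begin{smallmatrix}\phi'\\ \gamma'\end{smallmatrix}\right)\colon C'\to I\oplus\Sigma C_1$ to which \ref{d-ab axiom 2} is applied; the contradiction comes from testing with $U=\Sigma^{m-1}C_0$, where injectivity of $I$ kills $\Hom(C_0,\Sigma I)$ and forces $(\phi''\xi)_*$ to be surjective, while $\gamma_0$ fails to lie in its image because $\Sigma(\gamma')\circ\gamma_0\neq 0$. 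In your version the monomorphism is simply $\Sigma Y\to E$ with $E$ the rotated cone of $\epsilon$, and the role played in the paper by the injectivity of $I$ is played instead by the truncation $\tau_{\le m}$, which annihilates the term at position $3m$; enough injectives is used only for the routine dimension shift from $\Ext^2=0$ to $\Ext^{\ge 2}=0$. Your bookkeeping checks out: feeding $f_0\colon\Sigma Y\to E$ into the construction of Proposition \ref{prop 4}, position $3k$ of the candidate sequence is $\tau_{\le m}\Sigma^{k+1}Y$, position $3k+1$ is $\tau_{\le m}\Sigma^{k}E$ and position $3k+2$ is $\tau_{\le m}\Sigma^{k}X$, so positions $3m-2$, $3m-1$, $3m$ are indeed $\Sigma^{m-1}E$, $\Sigma^{m-1}X$ and $0$, and exactness at position $3m-1$ under $\Hom(\Sigma^{m-1}X,-)$ forces $\epsilon=0$; likewise $\Hom_{D^b(\cC)}(T,\Sigma^{-1}X)=0$ for $T\in\cC_{[0,m]}$ is the standard $t$-structure vanishing and does not presuppose heredity, so $f_0$ really is monic. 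Your route is arguably cleaner, avoiding the auxiliary injective resolution and the octahedron entirely in the core step; in fact, since $\Ext^2\equiv 0$ already implies $\Ext^{\ge 2}\equiv 0$ for Yoneda extensions by splicing, your core argument shows that \ref{d-ab axiom 2} alone forces heredity, with the injectivity hypothesis relegated to (and not even strictly needed for) the bootstrap. The reduction of part 2) to part 1) via $\cC^{\mathrm{op}}$ is also fine, since $\cC_{[0,m]}^{\mathrm{op}}$ identifies with $\Sigma^{-m}\bigl((\cC^{\mathrm{op}})_{[0,m]}\bigr)$ inside $D^b(\cC^{\mathrm{op}})$, which matches the paper's one-line appeal to duality.
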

 \begin{proof}
 Suppose that $\cC$ has enough injectives but is not hereditary. We show that Definition \ref{def d-abelian}, \ref{d-ab axiom 2} fails.
 
 There exists a non-trivial $2$-extension 
 \begin{align*}
 0\rightarrow C_1\rightarrow E\rightarrow F\rightarrow C_0\rightarrow 0
 \end{align*}
 in $\cC$, which is the Yoneda product of $1$-extensions
\[
  0\rightarrow C_1\rightarrow E\rightarrow C'\rightarrow 0
  ,\;\;
  0\rightarrow C'\rightarrow F\rightarrow C_0\rightarrow 0.
\]
 This corresponds to morphisms $\gamma': C'\rightarrow \Sigma C_1$ and $\gamma_0: C_0\rightarrow \Sigma C'$ in $D^b(\cC)$ such that the composition $\Sigma(\gamma')\circ\gamma_0$ is nonzero.
 
 We pick a short exact sequence $\xymatrix{0\ar[r]& C'\ar[r]^{\phi'}& I\ar[r]& C''\ar[r]& 0}$ in $\cC$ with $I$ injective. It induces a triangle $\xymatrix{C'\ar[r]^{\phi'}&I\ar[r]& C''\ar[r]^{\phi''}& \Sigma C'}$  in $D^b(\cC)$.  It is easy to see that $\phi'$ is monic in $\cC_{[0,m]}$. By \cite[Lem. 1.4.3]{Nee01}, there is a commutative diagram 
\begin{equation}
 \label{eq 1}
 \xymatrix{C'\ar[d]_{\gamma'}\ar[r]^{\phi'}&I\ar[r] \ar[d]^{i}&C''\ar[r]^{\phi''}\ar@{=}[d]&\Sigma C'\ar[d]^{\Sigma\gamma'}\\
\Sigma C_1\ar[r]_{\sigma} & X\ar[r]_{\xi}& C''\ar[r] &\Sigma C_1}
\end{equation}
 where the rows are triangles, such that there is a triangle
\begin{align}\label{eq 2}
 \xymatrix{C'\ar[rr]^-{\tiny \begin{pmatrix}
 \phi'\\\gamma'
 \end{pmatrix}}&&I\oplus\Sigma C_1\ar[rr]^-{\tiny \begin{pmatrix}
 i,-\sigma
 \end{pmatrix}}&&X\ar[r]^{\phi''\xi}&\Sigma C'.}
\end{align}
 The lower triangle in \eqref{eq 1} shows $X\in\cC_{[0,1]}$. Moreover, $\begin{pmatrix}
 \phi'\\\gamma'
 \end{pmatrix}$ is a morphism in $\cC_{[0,m]}$ since $m \geq 1$ and it is a monic in $\cC_{[0,m]}$ since $\phi'$ is monic in $\cC_{[0,m]}$.
 The construction in the proof of Proposition \ref{prop 4} shows that the following diagram obtained from \eqref{eq 2} provides an $n$-cokernel of $\begin{pmatrix}
 \phi'\\\gamma'\end{pmatrix}$.
\begin{align}
\xymatrix{\cdots\ar[r]&\tau_{\leq m}\Sigma^{m-1}X\ar[r]\ar@{=}[d]&\tau_{\leq m}\Sigma^{m}C'\ar[r]\ar@{=}[d]&\tau_{\leq m}\Sigma^{m}(I\oplus\Sigma C_1)\ar[r]\ar@{=}[d]&\tau_{\leq m}\Sigma^{m}X\ar@{=}[d]\\
\cdots\ar[r]&\Sigma^{m-1}X\ar[r]&\Sigma^{m}C'\ar[r]&\Sigma^{m}I\ar[r]&\tau_{\leq m}\Sigma^{m}X}
\end{align} 
The equalities follow from $C',I,C_1\in\cC$ and $X\in\cC_{[0,1]}$.
 Now, we can show that Definition \ref{def d-abelian}, \ref{d-ab axiom 2} fails. The axiom would imply that if $U\in \cC_{[0,m]}$, then $\Hom(U,-)$ sent 
 \begin{align*}
 \xymatrix{\cdots\ar[r]&\Sigma^{m-1}X\ar[r]&\Sigma^{m}C'\ar[r]&\Sigma^{m}I\ar[r]&\tau_{\leq m}\Sigma^{m}X}
 \end{align*}
 to an exact sequence. Setting $U=\Sigma^{m-1}C_0$, this would imply
 \begin{align}
 \xymatrix{\Hom(C_0,X)\ar[rr]^{(\phi''\xi)_{\ast}}&&\Hom(C_0,\Sigma C')\ar[rr]^{(\Sigma\phi')_{\ast}}&&\Hom(C_0,\Sigma I)}
 \end{align}
were exact. Since $I$ is injective, the last term is zero, so this would imply $(\phi''\xi)_{\ast}$ were surjective.
 However, \eqref{eq 2} gives an exact sequence
  \begin{align}
 \xymatrix{\Hom(C_0,X)\ar[rr]^{(\phi''\xi)_{\ast}}&&\Hom(C_0,\Sigma C')\ar[rr]^-{\tiny -\begin{pmatrix}
 \Sigma\phi'\\\Sigma\gamma'
 \end{pmatrix}_{\ast}}&&\Hom(C_0,\Sigma I\oplus\Sigma^2C_1)}
 \end{align}
 where  
 \begin{align*}
 -\begin{pmatrix}
 \Sigma\phi'\\\Sigma\gamma'
 \end{pmatrix}_{\textstyle \ast}(\gamma_0)=-\begin{pmatrix}
 \Sigma(\phi')\circ\gamma_0\\\Sigma(\gamma')\circ\gamma_0
 \end{pmatrix}\neq 0
 \end{align*}
 since $\Sigma(\gamma')\circ\gamma_0\neq 0$.  This proves $(\phi''\xi)_{\ast}$ not surjective.
 
 For the second statement, apply a dual argument.
 \end{proof}
 
 \begin{theorem}
 Assume that $m\geq 1$ and $\cC$ has enough injectives or enough projectives.  If $\cC_{[0,m]}$ is $n$-abelian then $\cC$ is a hereditary category.
 \end{theorem}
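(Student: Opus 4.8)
The plan is to recognize that this theorem is the assembly of the three propositions already established in this section, so that essentially no new argument is required beyond a case distinction. First I would unwind the hypothesis: by Definition \ref{def d-abelian}, the statement that $\cC_{[0,m]}$ is $n$-abelian means that all of Axioms \ref{d-ab axiom 0} through \ref{d-ab axiom 2 op} hold. In particular, both \ref{d-ab axiom 2} and its opposite \ref{d-ab axiom 2 op} are satisfied. (Axioms \ref{d-ab axiom 0} and \ref{d-ab axiom 1} are automatic for $\cC_{[0,m]}$ by the first Proposition and Proposition \ref{prop 4}, but they are not needed for the conclusion here.)

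Next I would split along the disjunction in the hypothesis. If $\cC$ has enough injectives, then since \ref{d-ab axiom 2} holds I can apply Proposition \ref{prop 5}(1) verbatim to conclude that $\cC$ is hereditary. If instead $\cC$ has enough projectives, then since the dual axiom \ref{d-ab axiom 2 op} holds I can apply Proposition \ref{prop 5}(2) to reach the same conclusion. In either case $\cC$ is hereditary, which finishes the proof. The only subtlety to flag is that Proposition \ref{prop 5} is stated as a one-directional obstruction — it assumes $\cC$ is \emph{not} hereditary and derives the failure of \ref{d-ab axiom 2} (resp. \ref{d-ab axiom 2 op}) — so I would phrase the deduction contrapositively: if $\cC$ were not hereditary, the relevant axiom would fail, contradicting $n$-abelianness.

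The genuine difficulty is entirely upstream and has already been discharged in Proposition \ref{prop 5}: one must manufacture an explicit monomorphism (or, dually, epimorphism) in $\cC_{[0,m]}$ whose $n$-cokernel fails to be $n$-exact precisely when $\cC$ carries a non-split $2$-extension. Were I proving the whole package from scratch, I would expect the crux to be the detection step in that proposition — lifting a non-trivial Yoneda $2$-extension $0 \to C_1 \to E \to F \to C_0 \to 0$ to morphisms $\gamma', \gamma_0$ in $D^b(\cC)$ with $\Sigma(\gamma')\circ\gamma_0 \neq 0$, and then using $\Hom(\Sigma^{m-1}C_0,-)$ together with the injectivity of $I$ to witness the non-surjectivity of $(\phi''\xi)_\ast$. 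At the level of the present theorem, however, all of this is already available, so the proof reduces to the two-line case split described above.
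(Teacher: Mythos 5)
Your proof matches the paper's exactly: the paper's own proof of this theorem is the single line ``This follows from Proposition \ref{prop 5},'' and your case split on enough injectives versus enough projectives, applied contrapositively to the two parts of that proposition, is precisely the intended argument. The additional remarks about where the real work lies (in Proposition \ref{prop 5}) are accurate but not needed for this statement.
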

 \begin{proof}
This follows from Proposition \ref{prop 5}. 
 \end{proof}

\section{Correspondence between certain wide subcategories}\label{section wide}
We give the proof of Theorem \ref{thm wide subcats}.  The objects from Set-ups \ref{setup 1 page 1} and \ref{set up 15 P28} will be defined as follows, cf.\ Subsection \ref{subsec:Higher}.
\begin{setup}
Let $\mathbb{K}$ be a field, $\Lambda$ a finite dimensional $\mathbb{K}$-algebra with $\gldim\Lambda\leq d$.
\begin{enumerate}[label=\alph*)]
  \item  $\fa$ is a $d$-cluster tilting subcategory of $\modd\Lambda$.
  \item  $\fa_j=\Sigma^{dj}\fa$ in $D^b(\modd\Lambda)$.
  \item  $\cT=\add\left(\Sigma^{d\mathbb{Z}}A\right)$ is the corresponding $d$-cluster tilting subcategory of $D^b(\modd\Lambda)$; see \cite[Thm 1.21]{iyama2007auslander}.
\end{enumerate}
\end{setup}

Set-up \ref{setup 1 page 1} and Set-up \ref{set up 15 P28}, parts \ref{set up 15 b} and \ref{set up 15 c} are clearly satisfied, and Set-up \ref{set up 15 P28}, part \ref{set up 15 a} holds by \cite[Thm 1]{geiss2013n}.  We will show in Proposition \ref{prop 6 wide p10 } that Set-up \ref{set up 15 P28}, part \ref{set up 15 d} is satisfied.

Recall that $\tau_{\geq i}$ and $\tau_{\leq s}$ are the functors on $\cT$ from Lemmas \ref{lemma 6 p6} and \ref{lemma 7 p7}.

\begin{lemma}\label{lemma 2 wide p 2}
Suppose $x\in D^b(\modd\Lambda)$ has $H_{\ast}x$ concentrated in degree zero and let $\xi:x\rightarrow t$ be a $\cT$-envelope. Then, $H_{\ast}t$ is concentrated in degree zero and $H_{0}\xi$ is monic.
\end{lemma}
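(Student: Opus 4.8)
The plan is to reduce the whole statement to an ordinary left $\fa$-approximation inside $\modd\Lambda$, and then to show that such an approximation, viewed as a morphism in $D^b(\modd\Lambda)$, is already a $\cT$-envelope. Since $x$ has homology only in degree zero, it is isomorphic in $D^b$ to the module $H_0 x$, which I will simply call $x$. First I would choose a left $\fa$-envelope (minimal left $\fa$-approximation) $\xi_0\colon x \to m_0$ in $\modd\Lambda$; this exists because $\fa$ is functorially finite. As $\fa$ is $d$-cluster tilting it contains all injective $\Lambda$-modules, since $\Ext^i_\Lambda(\fa, D\Lambda)=0$ for $i>0$ forces $D\Lambda \in \fa$; and $\modd\Lambda$ has enough injectives, so $x$ embeds into some injective $I \in \fa$. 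That embedding factors through $\xi_0$ by the approximation property, which forces $\xi_0$ to be monic. The resulting short exact sequence $0 \to x \to m_0 \to c \to 0$ will be the key input for the next step.

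Next I would verify that $\xi_0$, regarded as a morphism in $D^b(\modd\Lambda)$, is a $\cT$-preenvelope. Because every object of $\cT$ is a finite sum of shifts $\Sigma^{dj}a$ with $a \in \fa$, it suffices to check that $\Hom_{D^b}(m_0,\Sigma^{dj}a) \to \Hom_{D^b}(x,\Sigma^{dj}a)$ is surjective for each $j$. Using $\Hom_{D^b}(-,\Sigma^{dj}a) = \Ext^{dj}_\Lambda(-,a)$ together with the hypothesis $\gldim\Lambda \leq d$, both groups vanish unless $j \in \{0,1\}$: for $j \geq 2$ the degree $dj$ exceeds $\gldim\Lambda$, and for $j \leq -1$ the $\Ext$ is negative. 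For $j=0$ surjectivity is exactly the defining property of the $\fa$-approximation, and for $j=1$ it follows from the long exact $\Ext$-sequence of $0 \to x \to m_0 \to c \to 0$, in which $\Ext^{d+1}_\Lambda(c,a)=0$ by $\gldim\Lambda \leq d$ yields the crucial surjection $\Ext^d_\Lambda(m_0,a) \twoheadrightarrow \Ext^d_\Lambda(x,a)$. This step is the heart of the argument and the place where the global-dimension bound is essential.

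Finally I would upgrade ``preenvelope'' to ``envelope''. Since $m_0$ is a module, $\End_{D^b}(m_0) = \End_\Lambda(m_0)$, so any $\phi$ with $\phi\xi_0 = \xi_0$ is a module endomorphism, hence an automorphism by left-minimality of $\xi_0$ in $\modd\Lambda$; thus $\xi_0$ is a $\cT$-envelope. Because $\cT$-envelopes are unique up to isomorphism, the given $\xi\colon x \to t$ is isomorphic to $\xi_0$, so $t \cong m_0$ is concentrated in homological degree zero, and $H_0\xi$ is identified with the monic map $\xi_0$. (Equivalently, and avoiding uniqueness, the $\cT$-envelope is a direct summand of the preenvelope $m_0$ in the Krull--Schmidt category $D^b(\modd\Lambda)$; additivity of $H_*$ then shows any such summand is concentrated in degree zero, and this summand is again a left $\fa$-approximation, hence monic by the injective argument.) I expect the main obstacle to be the bookkeeping that matches the two defining characterizations of a $\cT$-approximation against the two nonvanishing degrees $j=0,1$, and in particular confirming the $\Ext^d$-surjectivity; everything else is formal.
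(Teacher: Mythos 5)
Your proposal is correct and follows essentially the same route as the paper: reduce to the minimal left $\fa$-approximation $\xi_0\colon x\to m_0$ of the module $x$, observe it is monic because $\fa$ contains the injectives, and verify the $\cT$-preenveloping property degreewise, where $\gldim\Lambda\le d$ kills the cases $j\notin\{0,1\}$ and the short exact sequence $0\to x\to m_0\to c\to 0$ together with $\Ext^{d+1}_\Lambda(c,a)=0$ handles $j=1$ (the paper phrases this via the triangle $\Sigma^{-1}c\to x\to m_0\to c$ rather than the long exact $\Ext$-sequence, but it is the same computation). Your final paragraph identifying the given envelope with $\xi_0$ (or with a summand of it) is in fact slightly more careful than the paper's ``can then be used as $\xi$'' shortcut.
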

\begin{proof}
Replacing $x$ by an isomorphic object, we can assume that $x$ is an object of $\modd\Lambda$. Let $\xi:x\rightarrow a$ be an $\fa$-envelope. It is monic and $a\in\fa\subset\cT$, so it is enough to prove that each morphism $\tilde{\xi}:x\rightarrow\tilde{t}$ with $\tilde{t}\in\cT$ indecomposable factors through $\xi$, since $\xi:x\rightarrow a$ can then be used as $\xi: x\rightarrow t$ in the lemma. Observe that we have $\tilde{t}=\Sigma^{di}\tilde{a}$ for some $i \in \mathbb{Z}$ and $\tilde{a}\in\fa$ indecomposable. If $i\notin\{0,1\}$ then $\tilde{\xi}=0$ since $\gldim\Lambda\leq d$, so $\tilde{\xi}$ factors through $\xi$ trivially.
If $i=0$ then $\tilde{t}=\tilde{a}$ so $\tilde{\xi}$ factors through the $\fa$-envelope $\xi$. Now note that there is a short exact sequence 
$\xymatrix{0\ar[r]&x\ar[r]^{\xi}&a\ar[r]&c\ar[r]&0}$ in $\modd\Lambda$, yielding a triangle $\xymatrix{\Sigma^{-1}c\ar[r]&x\ar[r]^{\xi}&a\ar[r]&c}$ in $D^b(\modd\Lambda)$.
If $i=1$ then $\tilde{t}=\Sigma^d\tilde{a}$ and we consider the following diagram.
\begin{align*}
\xymatrix{\Sigma^{-1}c\ar[rrd]_{\gamma}\ar[rr]&&x\ar[d]^-{\tilde{\xi}}\ar[rr]^{\xi}&&a\ar@{..>}[lld]^{\alpha}\\
&&\Sigma^d\tilde{a}&&}
\end{align*}
The morphism $\gamma$ is zero because $c,\tilde{a}\in\modd\Lambda$ while $\gldim\Lambda\leq d$ so the factorization $\alpha$ exists.
\end{proof}

\begin{lemma}\label{lemma 3 wide p 4} Suppose $x\in D^b(\modd\Lambda)$ has $H_{\ast}x$ concentrated in degrees $\leq 0$ and let $\xi: x\rightarrow t$ be a $\cT$-envelope. Then, $H_{\ast}t$ is concentrated in degrees $\leq 0$ and $H_{0}\xi$ is monic.
\end{lemma}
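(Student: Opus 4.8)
The plan is to induct on the number of homological degrees in which $x$ is supported, peeling off the top homology by means of the canonical truncation triangle $H_0 x \xrightarrow{\iota} x \xrightarrow{p} x' \xrightarrow{w} \Sigma H_0 x$, where $x' = \tau_{\leq -1}x$ has $H_{\ast}x'$ concentrated in degrees $\leq -1$. The base case, when $H_{\ast}x$ sits in a single degree, is Lemma \ref{lemma 2 wide p 2} if that degree is $0$; if it is a negative degree $-k$ then, writing $x\cong\Sigma^{-k}x_0$ with $x_0\in\modd\Lambda$, one computes $\Hom(x,\Sigma^{di}\tilde a)=\Ext^{di+k}(x_0,\tilde a)$, which by $\gldim\Lambda\leq d$ forces $i\leq 0$, so the $\cT$-envelope already lies in $\cT_{\leq 0}$ and $H_0\xi$ is vacuously monic. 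The two assertions to establish in general are that $H_{\ast}t$ sits in degrees $\leq 0$ and that $H_0\xi$ is monic.

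For the inductive step I would build the envelope of $x$ from the envelopes of the two outer terms. By Lemma \ref{lemma 2 wide p 2} the $\fa$-envelope $\xi_0\colon H_0 x\to t_0$ (with $t_0\in\fa_0$ and $H_0\xi_0$ monic) is a $\cT$-envelope of $H_0 x$, and by the inductive hypothesis $x'$ has a $\cT$-envelope $\xi'\colon x'\to t'$ with $t'\in\cT_{\leq 0}$. The aim is to assemble a morphism $\xi\colon x\to t_0\oplus t'\in\cT_{\leq 0}$ and to verify it is a $\cT$-preenvelope; the $\cT$-envelope is then a direct summand of $t_0\oplus t'$, hence also lies in $\cT_{\leq 0}$. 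The verification can be carried out target by target against each indecomposable $\Sigma^{di}\tilde a\in\cT$, using that the long exact sequence attached to the truncation triangle gives $\Hom(x,\Sigma^{di}\tilde a)=0$ for $i\geq 2$ and an isomorphism $\Hom(x,\Sigma^{d}\tilde a)\cong\Ext^d_{\Lambda}(H_0 x,\tilde a)$ for $i=1$. Thus the only component of the envelope that could a priori live in positive degree is an $\fa_1$-summand, and that summand is entirely governed by $H_0 x$: a map $x\to\Sigma^d\tilde a$ is determined by its restriction along $\iota$, which factors through $\xi_0$ exactly as in the $i=1$ step of Lemma \ref{lemma 2 wide p 2}. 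The targets with $i\leq -1$ factor through $p$ and then through $\xi'$, and those with $i=0$ through $\xi_0$ and through $\xi'$.

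The crux, and the step I expect to cost the most, is the assembly: turning $\xi_0$ into a component of a morphism out of $x$ requires lifting it along $\iota$, and the obstruction is the class $\Sigma\xi_0\circ w\in\Hom(x',\Sigma t_0)$, which does not vanish for purely formal reasons. The feature I would exploit is that a $d$-cluster tilting subcategory contains the injective $\Lambda$-modules: since $x'$ is supported in degrees $\leq -1$ and $\Ext^{\geq 1}(-,I)=0$ for $I$ injective, one gets $\Hom(x',\Sigma I)=0$, so after composing with the injective envelope $t_0\hookrightarrow I$ the obstruction dies, and this is precisely what lets the construction absorb the positive-degree and degree-$0$ data without leaving anything in degree $d$. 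Equivalently, one may argue on the minimal $\cT$-envelope directly and show, by left-minimality together with the isomorphism $\Hom(x,\Sigma^d\tilde a)\cong\Ext^d(H_0 x,\tilde a)$ and Lemma \ref{lemma 2 wide p 2}, that its $\fa_1$-component is superfluous; reconciling the injective-envelope lift with the $\fa$-envelope required for the non-injective degree-$0$ targets is the delicate bookkeeping. Once $\xi$ is in hand, the monomorphy of $H_0\xi$ follows from that of $H_0\xi_0$: the degree-$0$ homology of $t_0\oplus t'$ receives $\xi_0$ as a direct component, so $H_0\xi$ has a monic component and is monic, and since a preenvelope factors through its envelope this property descends to the summand that is the actual $\cT$-envelope.
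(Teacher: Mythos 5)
Your overall architecture is reasonable and your diagnosis of where the difficulty lies is exactly right --- but the difficulty is not resolved, and the proposed injective-envelope fix does not close it. The problem is the direction of your truncation triangle: $H_0x\xrightarrow{\iota}x\to x'$ exhibits the degree-zero data as a subobject-like piece of $x$, so the $\fa$-envelope $\xi_0\colon H_0x\to t_0$ only gives you a morphism out of $H_0x$; to make it (or anything retaining its preenveloping property) into a component of a morphism out of $x$ you must kill the obstruction in $\Hom(x',\Sigma t_0)$, which, as you note, is nonzero in general. Composing with $t_0\hookrightarrow I$ does kill the obstruction, but it also destroys the preenveloping property: a map $H_0x\to\tilde a$ with $\tilde a\in\fa$ not injective need not factor through $H_0x\to I$ (for instance the identity of $H_0x$ when $H_0x\in\fa$ is not injective), so the lifted map $x\to I$ cannot absorb the degree-$0$ and $\fa_1$-targets. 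The ``delicate bookkeeping'' you defer is therefore not bookkeeping; it is the missing argument, and both the $i=0$ and the $i=1$ cases of your target-by-target check depend on it.

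The paper sidesteps this entirely by using the \emph{hard} (brutal) truncation instead of the soft one: for $x$ a complex concentrated in non-positive degrees this gives a short exact sequence of complexes $0\to x'\to x\xrightarrow{f}x_0\to 0$ with $x_0\in\modd\Lambda$ the degree-zero term, hence a genuine morphism $f\colon x\to x_0$ to a module with no lifting obstruction. Since $\Hom(x',\Sigma^d\tilde a)=0$ (as $x'$ lives in degrees $\leq-1$ and $\gldim\Lambda\leq d$), every map $x\to\Sigma^d\tilde a$ factors through $f$, then through the $\fa$-envelope $\xi_0\colon x_0\to a_0$ of the module $x_0$; finally $\xi_0 f$, being an honest map from $x$ to $\fa_0$, factors through the part $\xi_u$ of the given envelope landing in non-positive degrees, because the component $\Sigma^d a\to a_0$ is a negative extension of modules and hence zero. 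If you want to keep your set-up, replace the soft truncation by the hard one; the rest of your outline (the $i\geq 2$ and $i\leq 0$ cases, and the deduction that $H_0\xi$ is monic from the monomorphy of $H_0\xi_0$ and of $H_0f$) then goes through essentially as you wrote it, and no induction on the homological width is needed.
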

\begin{proof}
(a) Replacing $x$ by an isomorphic object, we can assume $x$ is concentrated in non-positive homological degrees.  The object
$t$ is a direct sum of objects of the form $\Sigma^{di}a$ with $i\in\mathbb{Z}$, $a\in\fa$. Since $\gldim\Lambda\leq d$, each morphism $x\rightarrow \Sigma^{dj}a$ with $j\geq 2$ is zero, so $\xi$ has the form 
\begin{align*}
\xymatrix{x\ar[rr]^-{\tiny\begin{pmatrix}
\xi_a \\ \xi_u
\end{pmatrix}}&&\Sigma^da\oplus u}
\end{align*} with $a\in\fa$ and $u\in\cT$ concentrated in non-positive homological degrees.
It is enough to prove that each morphism $\tilde{\xi}:x\rightarrow \tilde{t}$ with $\tilde{t}\in\cT$ indecomposable factors through $\xi_u$. Observe that we have $\tilde{t}=\Sigma^{di}\tilde{a}$ for some $i\in\mathbb{Z}$ and $\tilde{a}\in\fa$ indecomposable.
If $i\geq 2$ then $\tilde{\xi}=0$ since $\gldim\Lambda\leq d$, so $\tilde{\xi}$ factors through $\xi_u$ trivially.

If $i\leq 0$ then we have a factorization 
\begin{align*}
\xymatrix{x\ar[d]^-{\tilde{\xi}}\ar[rr]^-{\tiny\begin{pmatrix}
\xi_a \\ \xi_u
\end{pmatrix} }&&\Sigma^da\oplus u\ar@{..>}[lld]|-{\tiny\begin{pmatrix}
\varphi_a&\varphi_u
\end{pmatrix}}\\
\tilde{t}=\Sigma^{di}\tilde{a}&&}
\end{align*}
where $\varphi_a$ represents a negative extension between modules whence $\varphi_a=0$ so $\tilde{\xi}=\begin{pmatrix}
\varphi_a&\varphi_u
\end{pmatrix}\begin{pmatrix}
\xi_a\\\xi_u
\end{pmatrix}=\varphi_u\xi_u$.\\
Assume $i=1$. Hard truncation provides a short exact sequence of chain complexes $\xymatrix{0\ar[r]&x'\ar[r]^{f'}&x\ar[r]^f&x_0\ar[r]&0}$ with $x'$ concentrated in homological degrees $\leq -1$ and $x_0\in\modd\Lambda$, hence a short triangle $\xymatrix{x'\ar[r]^{f'}&x\ar[r]^f&x_0}$ in $D^b(\modd\Lambda)$. By Lemma \ref{lemma 2 wide p 2} there is a $\cT$-envelope $\xi_0:x_0\rightarrow a_0$ with $a_0\in\fa$. We get the following diagram.
\begin{align*}
\xymatrixcolsep{3pc}\xymatrixrowsep{3pc}\xymatrix{x'\ar[rr]^{f'}&&x\ar@/_4pc/[dddd]_{\tilde{\xi}}\ar[dd]|-{\tiny\begin{pmatrix}
\xi_a\\\xi_u
\end{pmatrix}}\ar[rr]^{f}&&x_0\ar@/^6pc/[ddddll]^{\alpha}\ar[dd]^{\xi_0}\\\\
&&\Sigma^da\oplus u 
\ar[rr]^{\tiny\begin{pmatrix}
0&\psi_u
\end{pmatrix}}&&a_0\ar[ddll]^{\varphi_0}\\\\
&&\tilde{t}=\Sigma^d\tilde{a}&&}
\end{align*}
To construct the diagram, observe that since $\gldim\Lambda\leq d$ we have $\tilde{\xi}f'=0$ so there exists $\alpha:x_0\rightarrow \Sigma^d\tilde{a}$ such that $\alpha f=\tilde{\xi}$. Since $\xi_0$ is a $\cT$-envelope there exists $\varphi_0:a_0\rightarrow \Sigma^d\tilde{a}$ such that $\alpha=\varphi_0\xi_0$.\\
Since $\xi=\begin{pmatrix}
\xi_a\\\xi_u
\end{pmatrix}$ is a $\cT$-envelope, there exists $\begin{pmatrix}
0&\psi_u
\end{pmatrix}: \Sigma^da\oplus u\rightarrow a_0$
such that $\xi_0 f=\begin{pmatrix}
0&\psi_u
\end{pmatrix}\begin{pmatrix}
\xi_a\\\xi_u
\end{pmatrix}=\psi_u\xi_u$. Notice that the first component of $\begin{pmatrix}
0&\psi_u
\end{pmatrix}$ is zero since it represents a negative extension of modules. But then,
\begin{align*}
\tilde{\xi}=\alpha f=\varphi_0\xi_0 f=\varphi_0\psi_u\xi_u
\end{align*}
so $\tilde{\xi}$ has been factored through $\xi_u$ as desired.

\noindent
(b) As above, hard truncation provides a short triangle $\xymatrix{x'\ar[r]^{f'}&x\ar[r]^f&x_0}$ with $x'$ concentrated in homological degrees $\leq -1$ and $x_0\in\modd\Lambda$. The long exact homology sequence contains $\xymatrix{0=H_0x'\ar[r]&x\ar[r]^{H_0f}&x_0}$ so $H_0f$ is monic. Let $\xi_0:x_0\rightarrow t_0$ be a $\cT$-envelope. Then we get the factorization
\begin{align*}
\xymatrix{x\ar[d]^{\xi}\ar[r]^f&x_0\ar[d]^{\xi_0}\\
t\ar@{..>}[r]_{g}&t_0}
\end{align*}
Now $H_0g\circ H_0\xi=H_0\xi_0\circ H_0f$ where $H_0\xi_0$ is monic by Lemma \ref{lemma 2 wide p 2} and $H_0f$ is monic by the above.  So $H_0\xi$ is monic as claimed.
\end{proof}

\begin{lemma}\label{lemma 4 wide p 8} Suppose $x\in D^b(\modd\Lambda)$ has $H_{\ast}x$ concentrated in degrees $\geq 1$ and let $\xi:x\rightarrow t$ be a $\cT$-envelope. Then $H_{\ast}t$ is concentrated in degrees $\geq d$.
\end{lemma}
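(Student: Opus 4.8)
The plan is to read off the conclusion from a direct sum decomposition of $t$ and to rule out the low-degree summands by a combination of a $\Hom$-vanishing statement and the left-minimality of the envelope. First I would use that $t\in\cT=\add(\Sigma^{d\mathbb{Z}}A)$, so by Set-up \ref{setup 1 page 1}\ref{set up 1 b} (with $\fa_j=\Sigma^{dj}\fa$) we may write $t\cong\bigoplus_{i\in\mathbb{Z}}\Sigma^{di}a_i$ with $a_i\in\fa$. Since each $a_i$ is a module, $\Sigma^{di}a_i$ has homology concentrated in the single degree $di$, and these degrees are all divisible by $d$. Hence $H_{\ast}t$ is concentrated in degrees $\geq d$ if and only if $a_i=0$ for every $i\leq 0$, i.e.\ $t\in\cT_{\geq 1}$. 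So the whole problem reduces to showing that $t$ has no nonzero summand $\Sigma^{di}a_i$ with $i\leq 0$, equivalently that $\tau_{\leq 0}t=0$ in the notation of Lemma \ref{lemma 7 p7}.

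The analytic heart is the following $\Hom$-vanishing: if $u\in D^b(\modd\Lambda)$ has $H_{\ast}u$ concentrated in degrees $\geq 1$ and $v$ has $H_{\ast}v$ concentrated in degrees $\leq 0$, then $\Hom_{D^b(\modd\Lambda)}(u,v)=0$. This is standard and uses only that negative $\Ext$ groups vanish in any abelian category; concretely I would prove it by induction on the number of nonzero homology modules of $u$, letting $r\geq 1$ be the lowest homological degree of $u$ and applying $\Hom(-,v)$ to the truncation triangle
\[
\Sigma^{r}H_{r}u \longrightarrow u \longrightarrow \tau_{\geq r+1}u \longrightarrow \Sigma^{r+1}H_{r}u .
\]
The end term $\Hom(\Sigma^{r}H_{r}u,v)=\Hom_{D^b}(H_{r}u,\Sigma^{-r}v)$ vanishes because $\Sigma^{-r}v$ has homology in degrees $\leq -r\leq -1$ while $H_{r}u$ is a module, and $\tau_{\geq r+1}u$ is handled by the inductive hypothesis. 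Applying this with $u=x$ and $v=\Sigma^{di}a_i$ for $i\leq 0$ (whose homology sits in degree $di\leq 0$) yields $\Hom_{D^b}(x,\Sigma^{di}a_i)=0$; more generally $\Hom_{D^b}(x,w)=0$ for every $w\in\cT_{\leq 0}$.

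Finally I would invoke left-minimality. Grouping the decomposition as $t=t_{\geq 1}\oplus t_{\leq 0}$ with $t_{\geq 1}=\bigoplus_{i\geq 1}\Sigma^{di}a_i$ and $t_{\leq 0}=\bigoplus_{i\leq 0}\Sigma^{di}a_i\in\cT_{\leq 0}$, the previous step shows that the $t_{\leq 0}$-component of $\xi$ is zero, so $\xi$ factors through the summand $t_{\geq 1}$. Recalling that an envelope is a left-minimal left approximation, the idempotent endomorphism $\phi=\operatorname{diag}(\operatorname{id}_{t_{\geq 1}},0)$ of $t$ satisfies $\phi\xi=\xi$ and must therefore be an isomorphism; this forces $t_{\leq 0}=0$. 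Hence $t\cong t_{\geq 1}$ and $H_{\ast}t$ is concentrated in degrees $\geq d$, as claimed.

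I expect the main obstacle to be stating the $\Hom$-vanishing cleanly, since it is tempting (but wrong) to read it off from t-structure orthogonality; the correct justification is that it flows purely from $\Ext^{<0}=0$, which the truncation induction above makes precise. The only other thing to be careful about is that the conclusion genuinely needs the minimality of the envelope rather than just the approximation property, since a non-minimal preenvelope could carry spurious summands in non-positive degrees; this step is the analogue, for the degree-$\geq 1$ case, of the monomorphicity arguments in Lemmas \ref{lemma 2 wide p 2} and \ref{lemma 3 wide p 4}.
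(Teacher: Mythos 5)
Your proposal is correct and follows essentially the same route as the paper's (much terser) proof: decompose $t$ into summands $\Sigma^{di}a_i$, observe that every morphism from $x$ to a summand with $i\leq 0$ vanishes, and conclude by minimality of the envelope that those summands are zero. The paper simply asserts the $\Hom$-vanishing and the ``superfluous summand'' step, whereas you supply the truncation-triangle induction and the left-minimality/idempotent argument; these are exactly the right justifications.
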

\begin{proof}
Each object in $\cT$ is the direct sum of objects $\Sigma^{di}a$ with $i\in\mathbb{Z}$ and $a\in\fa$.  If $i\leq 0$ then each morphism $x\rightarrow \Sigma^{di}a$ is zero so $\Sigma^{di}a$ is a superfluous summand in the envelope and must be zero. This implies the lemma.
\end{proof}

\begin{lemma}\label{lemma 5 wide p 9} Let $n\geq 0$ be an integer, suppose $x\in D^b(\modd\Lambda)$ satisfies
\begin{enumerate}[label=\alph*)]
\item\label{eq wide page 9} $H_*x$ is concentrated in degrees $-d+1,\ldots,nd$,
\end{enumerate}
and let $\xymatrix{x\ar[r]^{\xi}&t\ar[r]&y}$ be a short triangle with $\xi$ a $\cT$-envelope. Then $H_*y$ is concentrated in degrees $-d+2,\ldots,nd$ and, in particular, in degrees $-d+1,\ldots,nd$.
\end{lemma}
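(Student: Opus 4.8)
The plan is to pin down the homological amplitude of the envelope $t$ by reducing to the two special cases already established in Lemmas \ref{lemma 3 wide p 4} and \ref{lemma 4 wide p 8}, and then to read off the amplitude of $y$ from the long exact homology sequence of the triangle $\xymatrix{x\ar[r]^{\xi}&t\ar[r]&y\ar[r]&\Sigma x}$. The reduction works because $nd$ and $d$ are multiples of $d$, so the autoequivalences $\Sigma^{-nd}$ and $\Sigma^{d}$ of $D^b(\modd\Lambda)$ preserve $\cT=\add(\Sigma^{d\mathbb{Z}}\fa)$ and hence send $\cT$-envelopes to $\cT$-envelopes.

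First I would apply Lemma \ref{lemma 4 wide p 8} to $\Sigma^{d}\xi\colon\Sigma^{d}x\to\Sigma^{d}t$. Since $H_\ast x$ is concentrated in degrees $\geq -d+1$, the object $\Sigma^{d}x$ has homology in degrees $\geq 1$, so Lemma \ref{lemma 4 wide p 8} gives that $H_\ast(\Sigma^{d}t)$ is concentrated in degrees $\geq d$, i.e.\ $H_\ast t$ is concentrated in degrees $\geq 0$. Dually, I would apply Lemma \ref{lemma 3 wide p 4} to $\Sigma^{-nd}\xi\colon\Sigma^{-nd}x\to\Sigma^{-nd}t$; as $H_\ast x$ is concentrated in degrees $\leq nd$, the object $\Sigma^{-nd}x$ has homology in degrees $\leq 0$, so Lemma \ref{lemma 3 wide p 4} yields both that $H_\ast t$ is concentrated in degrees $\leq nd$ and that $H_{0}(\Sigma^{-nd}\xi)=H_{nd}\xi$ is monic. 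Combining the two, $H_\ast t$ is concentrated in degrees $0,\dots,nd$ and $H_{nd}\xi$ is a monomorphism.

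With this in hand I would run the long exact homology sequence
\[
\cdots\to H_{j}t\to H_{j}y\to H_{j-1}x\xrightarrow{\,H_{j-1}\xi\,}H_{j-1}t\to\cdots
\]
of the triangle. For $j\leq -d+1$ (so $j<0$) both $H_{j}t=0$ and $H_{j-1}x=0$, forcing $H_{j}y=0$; for $j\geq nd+2$ both $H_{j}t=0$ and $H_{j-1}x=0$ again give $H_{j}y=0$. This leaves the single critical degree $j=nd+1$, where $H_{nd+1}t=0$ produces an injection $H_{nd+1}y\hookrightarrow H_{nd}x$ whose image equals $\ker(H_{nd}\xi)$; since $H_{nd}\xi$ is monic this kernel vanishes, so $H_{nd+1}y=0$. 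Hence $H_\ast y$ is concentrated in degrees $-d+2,\dots,nd$, which a fortiori lie in $-d+1,\dots,nd$.

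I expect the degree $j=nd+1$ to be the only genuine obstacle: it is precisely the place where the amplitude bounds on $t$ and $x$ alone do not suffice, and one really needs the monomorphism $H_{nd}\xi$ supplied by Lemma \ref{lemma 3 wide p 4}. The only other point to watch is the degenerate case $d=1$, where $\cT=D^b(\modd\Lambda)$ and $\xi$ is an isomorphism, so $y=0$ and the statement is trivial; for $d\geq 2$ the inequality $-d+1<0$ used in the bottom range holds and the argument goes through as written.
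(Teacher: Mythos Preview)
Your proof is correct and follows essentially the same approach as the paper's: both first (de)suspend to apply Lemmas \ref{lemma 3 wide p 4} and \ref{lemma 4 wide p 8} and deduce that $H_\ast t$ is concentrated in degrees $0,\dots,nd$ with $H_{nd}\xi$ monic, and then read off the conclusion from the long exact homology sequence, treating $d=1$ separately. You are simply more explicit than the paper about which shifts to use and why $\Sigma^{d\mathbb{Z}}$ preserves $\cT$-envelopes.
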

\begin{proof}
If $d=1$ then $\cT=D^b(\modd\Lambda)$ whence $\xi=\operatorname{id}_x$ and $y\cong 0$ so the lemma is vacuously true. Assume $d\geq 2$. By (de)suspending lemmas \ref{lemma 3 wide p 4} and \ref{lemma 4 wide p 8}, we learn
\begin{enumerate}[label=\alph*)]
\setcounter{enumi}{1} 
\item\label{eq wide p9 b} $H_*t$ is concentrated in degrees $0,\ldots,nd$,
\item\label{eq wide p9 c} $H_{nd}\xi$ is a monomorphism.
\end{enumerate}
The triangle gives a long exact sequence of pieces
\begin{align*}
\xymatrix{H_{i+1}y\ar[r]&H_ix\ar[r]^{H_i\xi}&H_it\ar[r]&H_iy\ar[r]&H_{i-1}x}.
\end{align*}
Hence \ref{eq wide page 9} and \ref{eq wide p9 b} imply that $H_*y$ is concentrated in degrees $-d+2,\ldots,nd+1$, and \ref{eq wide p9 b} and \ref{eq wide p9 c} imply $H_{nd+1}y=0$.
\end{proof}

\begin{proposition}\label{prop 6 wide p10 } If $n\geq 0$ is an integer, then
\begin{align*}
\cT_{[0,n]}=\add\left(\fa_0,\fa_1,\ldots,\fa_n\right)=\add\left(\fa,\Sigma^d\fa,\ldots,\Sigma^{nd}\fa\right)
\end{align*}
is closed under $d$-extensions in $\cT$ in the sense specified in Setup \ref{set up 15 P28}, part \ref{set up 15 d}.
\end{proposition}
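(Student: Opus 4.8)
The plan is to realise the required $(d+2)$-angle through the explicit Geiss--Keller--Oppermann construction of $(d+2)$-angles in the $d$-cluster tilting subcategory $\cT$ \cite[Thm 1]{geiss2013n}, and to control the homology of its intermediate terms by iterating Lemma \ref{lemma 5 wide p 9}. Given $t^0,t^{d+1}\in\cT_{[0,n]}$ and a morphism $\delta\colon t^{d+1}\to\Sigma^d t^0$, I would first trade the connecting morphism for a first morphism: set $A_0=\Sigma^{-d}t^{d+1}$, $A_1=t^0$ and $f_0=\Sigma^{-d}\delta\colon A_0\to A_1$, where $A_0,A_1\in\cT$ and $f_0$ is a morphism of $\cT$ because $\Sigma^d\cT=\cT$. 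A $(d+2)$-angle whose connecting morphism is $\delta$ is, up to a single rotation, the same as a $(d+2)$-angle whose first morphism is $f_0$, so it suffices to produce the latter with its intermediate terms lying in $\cT_{[0,n]}$.

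Next I would build this angle by the triangle-and-envelope procedure. Complete $f_0$ to a triangle $A_0\to A_1\to B_1\to\Sigma A_0$ in $D^b(\modd\Lambda)$, then inductively choose $\cT$-envelopes $\xi_i\colon B_i\to A_{i+1}$ and complete them to triangles $B_i\to A_{i+1}\to B_{i+1}\to\Sigma B_i$ for $i=1,\dots,d$. By \cite[Thm 1]{geiss2013n} the resulting sequence $A_0\to A_1\to A_2\to\cdots\to A_{d+1}\to\Sigma^d A_0$ is a $(d+2)$-angle, and rotating it once produces the $(d+2)$-angle $t^0\to A_2\to\cdots\to A_{d+1}\to t^{d+1}\xrightarrow{\delta}\Sigma^d t^0$ (any sign introduced by the rotation is absorbed by an isomorphism of $(d+2)$-angles, which does not alter the middle terms up to isomorphism). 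Thus $t^i:=A_{i+1}$ for $1\le i\le d$ are the candidate intermediate terms, and it remains only to check $A_2,\dots,A_{d+1}\in\cT_{[0,n]}$.

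The homological bookkeeping is the core of the argument. Since $t^{d+1}\in\cT_{[0,n]}$, the object $A_0=\Sigma^{-d}t^{d+1}$ has homology concentrated in degrees $-d,0,d,\dots,(n-1)d$, while $A_1=t^0$ has homology in degrees $0,\dots,nd$; the long exact homology sequence of the first triangle then places $H_\ast B_1$ in degrees $-d+1,\dots,nd$. This is exactly the hypothesis of Lemma \ref{lemma 5 wide p 9}, which I would apply to each envelope triangle $B_i\to A_{i+1}\to B_{i+1}$ in turn. It returns $H_\ast B_{i+1}$ again concentrated in degrees $-d+1,\dots,nd$, so the induction continues; and, as established in the proof of Lemma \ref{lemma 5 wide p 9} via Lemmas \ref{lemma 3 wide p 4} and \ref{lemma 4 wide p 8} (its intermediate claim that the $\cT$-envelope target of an object with homology in degrees $-d+1,\dots,nd$ has homology in degrees $0,\dots,nd$), each envelope target $A_{i+1}$ has homology in degrees $0,\dots,nd$. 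Because $A_{i+1}\in\cT$ has homology supported on multiples of $d$, this forces $A_{i+1}\in\cT_{[0,n]}$, and after $d$ steps all of $t^1,\dots,t^d$ lie in $\cT_{[0,n]}$, as required.

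The main obstacle is precisely this homological propagation: one must verify that the window of degrees $-d+1,\dots,nd$ is stable under replacing an object by the cofiber of its $\cT$-envelope, which is the content of Lemma \ref{lemma 5 wide p 9}, and that the envelope targets acquire no homology outside $0,\dots,nd$ (where one uses that objects of $\cT$ have homology only in multiples of $d$, so that a lower bound of $-d+1$ already forces a lower bound of $0$). The degenerate case $d=1$, in which $\cT=D^b(\modd\Lambda)$ and every $\cT$-envelope is an isomorphism so that the envelope steps are vacuous, is immediate and consistent with the direct verification of Set-up \ref{set up 15 P28}, part \ref{set up 15 d} used in the hereditary setting.
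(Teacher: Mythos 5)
Your proposal is correct and follows essentially the same route as the paper: both realise the required $(d+2)$-angle by the iterated cone-and-$\cT$-envelope construction and propagate the homology bounds ($-d+1,\dots,nd$ on the successive cones, $0,\dots,nd$ on the envelope targets) via Lemmas \ref{lemma 3 wide p 4}, \ref{lemma 4 wide p 8} and \ref{lemma 5 wide p 9}. The only (harmless) discrepancy is an off-by-one in the envelope count: the paper takes envelopes only up to $x^{d-1}\to t^{d-1}$ and lets the final cone $x^d=t^d$ be the last term --- it lies in $\cT$ automatically because $\cT$ is $d$-cluster tilting, and its homology window together with support on multiples of $d$ forces it into $\cT_{[0,n]}$ --- rather than treating all of $t^1,\dots,t^d$ as envelope targets as you do.
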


\begin{proof}
Let $t^{d+1},t^0\in\cT_{[0,n]}$ and a morphism $\xymatrix{\Sigma^{-d}t^{d+1}\ar[r]^-{\delta}&t^0}$ be given. We construct a $(d+2)$-angle
\begin{align*}
\xymatrix{\Sigma^{-d}t^{d+1}\ar[r]^{\delta}&t^0\ar[r]^{f^0}&t^1\ar[r]^{f^1}&\cdots\ar[r]^{f^{d-1}}&t^d}
\end{align*}
by constructing a diagram of triangles in $D^b(\modd\Lambda)$
\begin{align*}
\xymatrix@C=1.3em{&&t^0\ar[rr]^{f^0}\ar[rd]&&t^1\ar[rr]^{f^1}\ar[rd]&&t^2\ar[rd]&&\cdots&&t^{d-1}\ar[rd]^{f^{d-1}}&&\\
&\Sigma^{-d}t^{d+1}\ar[ru]^{\delta}&&x^1\ar@{~>}[ll]\ar[ru]^{\xi^1}&&x^2\ar@{~>}[ll]\ar[ru]^{\xi^2}&&x^3\ar@{~>}[ll]&\cdots&x^{d-1}\ar[ru]^{\xi^{d-1}}&&x^d=t^d\ar@{~>}[ll]}
\end{align*}
where each $\xi^i$ is a $\cT$-envelope. Note that the last cone, $x^d=t^d$, is in $\cT$ because $\cT$ is $d$-cluster tilting.
The triangle $\Sigma^{-d}t^{d+1}\rightarrow t^0\rightarrow x^1\rightarrow \Sigma^{-d+1}t^{d+1}$ gives a long exact sequence showing that $H_{\ast}x^1$ is concentrated in degrees $-d+1,\ldots,nd$.
(De)suspending lemmas \ref{lemma 3 wide p 4} and \ref{lemma 4 wide p 8} shows that $H_{\ast}t^1$ is concentrated in degrees $0,\ldots,nd$ so $t^1\in\cT_{[0,n]}$. Lemma \ref{lemma 5 wide p 9} shows that $H_{\ast}x^2$ is concentrated in degrees $-d+1,\ldots,nd$.\\
Repeating the argument shows $t^2,\ldots,t^{d-1}\in\cT_{[0,n]}$ and $H_{\ast}x^3,\ldots,H_{\ast}x^d$ concentrated in degrees $-d+1,\ldots,nd$. But we know $t^d\in\cT$ so $H_{\ast}x^d=H_{\ast}t^d$ concentrated in degrees $-d+1,\ldots,nd$ implies $t^d\in\cT_{[0,n]}$.
\end{proof}

Recall that $\cT_{[0,m]}$ is $n$-abelian for $n = (d+2)(m+1) - 2$ by Theorem \ref{thm higher case B most general}.

\begin{proposition}\label{prop 7 wide p12} If $\cU$ is a wide subcategory of the $(d+2)$-angulated category $\cT$ in the sense of \cite[Def. 3.3]{Fe19}, then $\cU\cap\cT_{[0,m]}$ is a wide subcategory of the $n$-abelian category $\cT_{[0,m]}$ in the sense of \cite[Def. 2.8]{HJV20}.
\end{proposition}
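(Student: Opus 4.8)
The plan is to verify directly that $\cV := \cU \cap \cT_{[0,m]}$ satisfies the defining closure properties of a wide subcategory of the $n$-abelian category $\cT_{[0,m]}$ in the sense of \cite[Def. 2.8]{HJV20}, namely closure under $n$-kernels, $n$-cokernels and $n$-extensions, where $n = (d+2)(m+1)-2$. Since $\cU$ and $\cT_{[0,m]}$ are both additive and closed under direct summands, so is $\cV$; thus only the three closure conditions remain. Throughout I use that $\cU$, being wide in the $(d+2)$-angulated category $\cT$ in the sense of \cite[Def. 3.3]{Fe19}, is stable under $\Sigma^{d}$ and $\Sigma^{-d}$ and closed under $d$-kernels, $d$-cokernels and $d$-extensions.

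The key observation driving everything is that the truncation functors are projections onto direct summands. By Set-up \ref{setup 1 page 1} every object $t \in \cT$ decomposes as $\bigoplus_j a_j$ with $a_j \in \fa_j$, and the proofs of Lemmas \ref{lemma 6 p6} and \ref{lemma 7 p7} show that $\tau_{\geq 0}t$ and $\tau_{\leq m}t$ are exactly the direct summands obtained by discarding the components outside the prescribed range. Hence, if $u \in \cU$ then $\tau_{\leq m}u$ and $\tau_{\geq 0}u$ are direct summands of $u$, so they again lie in $\cU$; combined with $\Sigma^{\pm d}$-stability this gives $\tau_{\leq m}\Sigma^{d\ell}u \in \cU$ and $\tau_{\geq 0}\Sigma^{-d\ell}u \in \cU$ for $u \in \cU$ and all $\ell$. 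This is the mechanism by which membership in $\cU$ survives the truncation steps used to assemble $n$-(co)kernels.

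For $n$-cokernels, let $\varphi \colon V^0 \to V^1$ be a morphism in $\cV$. As $V^0, V^1 \in \cU$ and $\cU$ is closed under $d$-cokernels, the $(d+2)$-angle $V^0 \to V^1 \to t^2 \to \cdots \to t^{d+1} \to \Sigma^{d}V^0$ used in Theorem \ref{THM 16 P29 } may be chosen with $t^2, \dots, t^{d+1} \in \cU$, while the long exact homology sequence places all $t^i$ in $\cT_{[0,m+1]}$. The $n$-cokernel constructed there has terms of the form $\tau_{\leq m}\Sigma^{d\ell}t^i$, each of which lies in $\cV$ by the previous paragraph. The construction of $n$-kernels is dual, using $\tau_{\geq 0}$, $\Sigma^{-d}$ and the closure of $\cU$ under $d$-kernels. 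This settles the closure properties coming from \ref{d-ab axiom 1}.

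The main work, and the step I expect to be the genuine obstacle, is closure under $n$-extensions. Suppose $V_0 \to X_1 \to \cdots \to X_n \to V_{n+1}$ is $n$-exact in $\cT_{[0,m]}$ with $V_0, V_{n+1} \in \cV$; since the $X_i$ already lie in $\cT_{[0,m]}$, it suffices to show $X_i \in \cU$. By uniqueness of $n$-cokernels this sequence is isomorphic to the one produced in Theorem \ref{THM 16 P29 } from a single $(d+2)$-angle $\mathbb A \colon V_0 \to t^1 \to \cdots \to t^{d+1} \to \Sigma^{d}V_0$ with $t^i \in \cT_{[0,m+1]}$, so every $X_i$ has the form $\tau_{\leq m}\Sigma^{d\ell}t^j$, and by the key observation it is enough to prove $t^1, \dots, t^{d+1} \in \cU$. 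Using that $\cU$ is closed under $d$-extensions, it in fact suffices to place the single object $t^{d+1}$ in $\cU$: then the connecting morphism $t^{d+1} \to \Sigma^{d}V_0$ is a $d$-extension with both endpoints in $\cU$, hence is realised by a $(d+2)$-angle with all terms in $\cU$, which by the essential uniqueness of a $(d+2)$-angle over a given morphism is $\mathbb A$. The difficulty is that a priori only part of $t^{d+1}$ is known to lie in $\cU$: computing degrees gives $V_{n+1} = \tau_{\leq m}\Sigma^{dm}t^{d+1} = \Sigma^{dm}(\fa_0\text{-component of } t^{d+1})$, so only the $\fa_0$-component of $t^{d+1}$ is visibly in $\cU$, while its higher homological components are not yet controlled. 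To overcome this I would exploit that the sequence is simultaneously an $n$-kernel: dualising via Lemma \ref{lemma 12 of P20} anchors the construction at $V_{n+1}$ as well, and running the homology-degree estimates of Lemmas \ref{lemma 2 wide p 2}--\ref{lemma 5 wide p 9} on the successive cones of the $\cT$-envelopes allows one to propagate $\cU$-membership block by block, starting from the last block, whose two endpoints $\tau_{\leq m}\Sigma^{dm}V_0$ and $V_{n+1}$ both lie in $\cU$, and feeding this back through the $d$-kernel and $d$-cokernel closure of $\cU$ to the earlier blocks. Completing this propagation is the crux; once it yields $t^{d+1} \in \cU$, the reduction above finishes the argument and shows that $\cV$ is wide.
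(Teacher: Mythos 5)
Your treatment of closure under $n$-cokernels and $n$-kernels follows the paper's route, up to one technical point you skip: the completion of $\varphi$ to a $(d+2)$-angle with terms in $\cU$ and the completion with terms in $\cT_{[0,m+1]}$ are a priori two \emph{different} completions, and ``the long exact homology sequence places all $t^i$ in $\cT_{[0,m+1]}$'' does not apply to an arbitrary completion chosen inside $\cU$ (it may carry superfluous trivial summands anywhere in $\cT$). The paper repairs this by noting that both completions contain the minimal completion (all intermediate maps in $\rad$, \cite[Lem. 3.14]{Fe19}) as a direct summand, so the minimal completion lies in $\cU\cap\cT_{[0,m+1]}$; you need this step, after which your truncation-as-direct-summand observation finishes the argument exactly as in the paper.

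The genuine gap is in closure under $n$-extensions, and you have located it precisely but not closed it: you end by saying that propagating $\cU$-membership to all of $t^{d+1}$ ``is the crux'', i.e.\ the key step is left open, and the propagation you sketch (dualising via Lemma \ref{lemma 12 of P20} and re-running the homology estimates of Lemmas \ref{lemma 2 wide p 2}--\ref{lemma 5 wide p 9}) is not an argument. The idea you are missing is that the difficulty you describe --- only the $\fa_0$-component of $t^{d+1}$ being visibly in $\cU$ while its higher components are uncontrolled --- evaporates once you use that $\varphi$ is a monomorphism in $\cT_{[0,m]}$. After reducing to the case where all maps of the $n$-exact sequence are in $\rad$, write $t^i=a^i\oplus g^i$ with $a^i\in\fa_0$ and $g^i\in\cT_{[1,m+1]}$; monicity of $\varphi$ forces the connecting morphism to have the form $\begin{pmatrix}\alpha^{d+1}&0\\0&0\end{pmatrix}$, so Set-up \ref{set up p 8} and Lemma \ref{lemma 11 P14} split the $(d+2)$-angle as the direct sum of an angle with all terms $a^i\in\fa_0$ and a null-homotopic (hence trivial) summand carrying all the $g^i$. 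Dropping that summand, the minimal $(d+2)$-angle over $\varphi$ lives entirely in $\fa_0$, so $t^{d+1}$ \emph{is} its $\fa_0$-component; by uniqueness of minimal $n$-cokernels the last term $\Sigma^m_d a^{d+1}$ of the resulting $n$-extension is isomorphic to $t^{n+1}\in\cU$, whence $a^{d+1}\in\cU$ by $\Sigma_d$-stability, and then closure of $\cU$ under $d$-extensions applied to the minimal angle $a^0\to\cdots\to a^{d+1}\to\Sigma_d a^0$ gives $a^1,\ldots,a^d\in\cU$. Without this splitting step your reduction ``it suffices to place $t^{d+1}$ in $\cU$'' cannot be completed by the means you propose.
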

\begin{proof}
$\cU\cap\cT_{[0,m]}$ is closed under $n$-cokernels in $\cT_{[0,m]}$:  Let $\xymatrix{u^0\ar[r]^{\varphi}&u^1}$ be a morphism in $\cU\cap\cT_{[0,m]}$. It can be completed to a $(d+2)$-angle in $\cT$,
\begin{align*}
\xymatrix{u^0\ar[r]^{\varphi}&u^1\ar[r]^{\varphi^1}&t^2\ar[r]^{\varphi^2}&t^3\ar[r]^{\varphi^3}&\cdots\ar[r]&t^d\ar[r]^{\varphi^d}&t^{d+1}\ar[r]^{\varphi^{d+1}}&\Sigma_du^0}.
\end{align*}
By the proof of Theorem \ref{thm higher case B most general}, \ref{d-ab axiom 1} for $n$-cokernels we can choose $t^2,\ldots,t^{d+1}\in\cT_{[0,m+1]}$. By wideness of $\cU$, we can choose $t^2,\ldots,t^{d+1}\in\cU$. Each of these completions has as a direct summand the minimal completion where $\varphi^2,\ldots,\varphi^{d}\in\rad$ (see \cite[Lem. 3.14]{Fe19}), so the minimal completion has $t^2,\ldots,t^{d+1}\in\cU\cap\cT_{[0,m+1]}$. Note that this is also true for $d=1$. If $\varphi$ is used as $\varphi$ in the proof of Theorem \ref{thm higher case B most general}, \ref{d-ab axiom 1} for $n$-cokernels, we can hence assume $t^2,\ldots,t^{d+1}\in\cU\cap\cT_{[0,m+1]}$. But then \eqref{eq 16.2} provides an $n$-cokernel of $\varphi$ with all objects in $\cU\cap\cT_{[0,m]}$ because $\tau_{\leq m}$ sends $\cU$ to $\cU$ because it sends an object $t$ to a direct summand of $t$.

$\cU\cap\cT_{[0,m]}$ is closed under $n$-kernels in $\cT_{[0,m]}$: Proved analogously using the proof of Theorem \ref{thm higher case B most general}, \ref{d-ab axiom 1} for $n$-kernels.

$\cU\cap\cT_{[0,m]}$ is closed under $n$-extensions in $\cT_{[0,m]}$: 
Let 
\begin{align}\label{eq wide 7.1 p13}
\xymatrix{t^0\ar[r]^{\varphi}&t^1\ar[r]&\cdots\ar[r]&t^n\ar[r]^{\psi}&t^{n+1}}
\end{align}
be an $n$-exact sequence in $\cT_{[0,m]}$ with $t^0,t^{n+1}\in\cU\cap\cT_{[0,m]}$. We must show that it is equivalent to an $n$-extension with $t^1,\ldots,t^n\in\cU\cap\cT_{[0,m]}$.
 We can assume each unlabelled  arrow is in $\rad$ since if not, then this can be accomplished by dropping from each unlabelled arrow all trivial summands $\xymatrix{x\ar[r]^{\sim}&x}$ (doing so does not change the equivalence class of the $n$-extension). We can also assume $\varphi,\psi\in\rad$, since if not, then this can be accomplished by the same means.  Here the end terms and $t^1$ and $t^n$ change, but the new end terms are still in $\cU\cap\cT_{[0,m]}$ and if the new $t^1$ and $t^n$ are in $\cU\cap\cT_{[0,m]}$, then so were the old ones.

By the proof of Theorem \ref{thm higher case B most general}, \ref{d-ab axiom 1} for $n$-cokernels we can complete $\varphi$ to the $(d+2)$-angle \eqref{eq 16.1 P29} with $t^2,\ldots,t^{d+1}\in\cT_{[0,m+1]}$.  Writing $t^i=a^i\oplus g^i$ with $a^i\in\fa$, $g^i\in\cT_{[1,m+1]}$ places us in Set-up \ref{set up p 8} because $\varphi$ is $\cT_{[0,m]}$-monic. Hence Lemma \ref{lemma 11 P14} applies, providing a trivial summand which we drop from \eqref{eq 16.1 P29}. This cannot affect $\varphi$ since $\varphi$ is in $\rad$, so the new $(d+2)$-angle can be used as \eqref{eq 16.1 P29}, but by Lemma \ref{lemma 11 P14} it has the form
 \begin{align}\label{eq 7.2 wide page 14}
 \xymatrix{a^0\ar@{=}[d]\ar[r]_{\alpha^0}&a^1\ar@{=}[d]\ar[r]_{\alpha^1}&\cdots\ar[r]_{\alpha^{d-1}}&a^d\ar[r]_{\alpha^{d}}&a^{d+1}\ar[r]_{\alpha^{d+1}}&\Sigma_da^0\\
 t^0\ar[r]^{\varphi}&t^1&&&&}
 \end{align}
 with $\alpha^0=\varphi$. Dropping additional trivial summands $\xymatrix{x\ar[r]^{\sim}&x}$ from $\alpha^1,\ldots,\alpha^d$ enables us to assume $\alpha^1,\ldots,\alpha^d\in\rad$, and $\alpha^{d+1}\in\rad$ since $a^{d+1}\in\fa$, $\Sigma_da^0\in\Sigma_d\fa$. So each morphism in \eqref{eq 7.2 wide page 14} is in $\rad$.\\
 Using \eqref{eq 7.2 wide page 14} as \eqref{eq 16.1 P29} provides an $n$-extension \eqref{eq 16.2} of the form
 \begin{align}\label{eq 7.3 wide page 15}
 \xymatrix@C=0.7em{a^0\ar@{=}[d]\ar[r]^{\alpha^0}&a^1\ar@{=}[d]\ar[r]&\cdots\ar[r]&a^{d+1}\ar[r]&\Sigma_da^0\ar[r]&\Sigma_da^1\ar[r]&\cdots\ar[r]&\Sigma_da^{d+1}\ar[r]&\cdots\ar[r]&\Sigma^{m}_{d}a^0\ar[r]&\Sigma^{m}_{d}a^1\ar[r]&\cdots\ar[r]&\Sigma^{m}_{d}a^{d+1}\\
 t^0\ar[r]^{\varphi}&t^1&&&&}
 \end{align}
 with each arrow in $\rad$. By uniqueness of minimal $n$-cokernels, it must be isomorphic to \eqref{eq wide 7.1 p13} whence $\Sigma^{m}_{d}a^{d+1}\cong t^{n+1}\in\cU$ so $a^{d+1}\in\cU$ whence minimality of \eqref{eq 7.2 wide page 14} implies $a^1,\ldots,a^d\in\cU$. So each object in \eqref{eq 7.3 wide page 15}, hence in \eqref{eq wide 7.1 p13}, is in $\cU\cap\cT_{[0,m]}$ as desired.
\end{proof}

\begin{definition}\label{definition of certain wide cats wide page 16} If $\cW \subseteq \fa$ is an additive subcategory, then we define additive subcategories
\begin{enumerate}[label=\alph*)]
\item $\underline{\cW}=\add\left(\cW,\Sigma_d\cW,\cdots,\Sigma^m_d\cW\right)\subseteq \cT_{[0,m]}$,
\item $\overline{\cW}:=\add\left(\Sigma^i_d\cW\vert i\in\mathbb{Z} \right)\subseteq\cT$.
\end{enumerate}
\end{definition}
By \cite[Lem. 4.2]{Fe19} there are inverse bijections
\begin{align*}
    \left\{
        \begin{aligned}
            & \text{additive subcategories} \\
            & \text{of}\,\,\fa \\
        \end{aligned}
    \right\}  &  \left.
        \begin{aligned}
            &\ni \cW\longmapsto \;\;\overline{\cW}\in \\
            &\ni \fa\cap\cU\reflectbox{\small $\longmapsto$}\; \cU\in \\
        \end{aligned}
    \right.  \left\{
        \begin{aligned}
            & \text{additive subcategories} \\
            & \text{of}\,\, \cT\,\,\text{invariant under}\,\,\Sigma^{\pm}_d \\
        \end{aligned}
    \right\}.  
\end{align*}
We have $\overline{\cW}\cap\cT_{[0,m]}=\underline{\cW}$.

\begin{proposition}\label{prop 10 wide page 17}
Let $\cW\subseteq \fa$ be an additive subcategory. Then,
\begin{align*}
\underline{\cW}\,\,\text{is wide in}\,\,\cT_{[0,m]}\implies\cW\,\,\text{is wide in}\,\,\fa.
\end{align*}
\end{proposition}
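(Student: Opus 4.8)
The plan is to verify directly that $\cW$ satisfies the three conditions defining a wide subcategory of the $d$-abelian category $\fa=\fa_0=\cT_{[0,0]}$ in the sense of \cite[Def. 2.8]{HJV20}: closure under $d$-kernels, $d$-cokernels and $d$-extensions. Two structural facts are used throughout. First, every $t\in\underline{\cW}$ has, by Lemma \ref{lemma 4 p4}, a well-defined decomposition $t\cong a_m\oplus\cdots\oplus a_0$ with $a_j\in\fa_j$; writing $t$ as a summand of a sum of objects from $\cW,\Sigma_d\cW,\dots,\Sigma^m_d\cW$ and comparing $\fa_j$-components via Lemma \ref{lemma 4 p4} shows $a_j\in\Sigma^j_d\cW$, so that $\underline{\cW}\cap\fa_j=\Sigma^j_d\cW$ and in particular $\underline{\cW}\cap\fa_0=\cW$. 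Second, a $d$-exact sequence of the $d$-abelian category $\fa$ is exactly a $(d+2)$-angle $w^0\to w^1\to\cdots\to w^{d+1}\to\Sigma_dw^0$ of $\cT$ whose first $d+2$ terms lie in $\fa_0$: indeed the minimal $d$-(co)kernels and $d$-extensions of $\fa$ arise as the $\fa_0$-summand isolated by Lemma \ref{lemma 11 P14} in the construction of Theorem \ref{thm higher case B most general} applied with $m=0$.

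For $d$-cokernels, I take a morphism $\varphi\colon w^0\to w^1$ in $\cW$, which after discarding a trivial summand may be assumed to lie in $\rad$. Choosing the minimal $d$-cokernel $w^1\to c^2\to\cdots\to c^{d+1}$ of $\varphi$ in $\fa$ (all $c^i\in\fa_0$, all maps radical) and completing it to a $(d+2)$-angle with terms in $\fa_0$, I feed this angle into the construction of Theorem \ref{thm higher case B most general}, axiom \ref{d-ab axiom 1} for $n$-cokernels, as the angle \eqref{eq 16.1 P29}; here every $g^i=0$, so \eqref{eq 16.2} becomes an $n$-cokernel of $\varphi$ in $\cT_{[0,m]}$ whose objects are the $\Sigma^j_dc^i$ and $\Sigma^j_dw^0,\Sigma^j_dw^1$ with $0\le j\le m$. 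Every differential of this sequence is radical: the within-block maps are suspensions of the radical maps of the minimal $d$-cokernel, and the connecting maps go from $\fa_j$ to $\fa_{j+1}$, hence cannot be an isomorphism on any summand. Thus \eqref{eq 16.2} is the minimal $n$-cokernel of $\varphi$. Since $\underline{\cW}$ is closed under $n$-cokernels and the minimal $n$-cokernel is a direct summand of any $n$-cokernel while $\underline{\cW}$ is summand-closed, all objects of this minimal $n$-cokernel lie in $\underline{\cW}$. In particular each $c^i$ lies in $\fa_0$ and appears in the first block, so $c^i\in\underline{\cW}\cap\fa_0=\cW$. Hence $\cW$ is closed under $d$-cokernels; the dual argument, using axiom \ref{d-ab axiom 1} for $n$-kernels together with $\tau_{\geq 0}$ and Lemma \ref{lemma 12 of P20}, gives closure under $d$-kernels.

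For $d$-extensions, I take a $d$-exact sequence $w^0\to e^1\to\cdots\to e^d\to w^{d+1}$ in $\fa$ with $w^0,w^{d+1}\in\cW$, taken minimal. Its associated $(d+2)$-angle $E$ has all of $w^0,e^1,\dots,e^d,w^{d+1}$ in $\fa_0$, and feeding $E$ into \eqref{eq 16.1 P29}--\eqref{eq 16.2} yields an $n$-exact sequence in $\cT_{[0,m]}$ running from $w^0$ to $\tau_{\leq m}\Sigma^m_dw^{d+1}=\Sigma^m_dw^{d+1}$, both of which lie in $\underline{\cW}$. As before all differentials are radical, so it is the minimal representative of its $n$-extension class. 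Closure of $\underline{\cW}$ under $n$-extensions produces an equivalent $n$-extension with middle terms in $\underline{\cW}$; since the minimal representative is a summand of any representative and $\underline{\cW}$ is summand-closed, our minimal sequence itself has all terms in $\underline{\cW}$. Its middle terms include $e^1,\dots,e^d\in\fa_0$, whence $e^i\in\underline{\cW}\cap\fa_0=\cW$, proving $\cW$ is closed under $d$-extensions and completing the verification that $\cW$ is wide in $\fa$.

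The main obstacle is the minimality bookkeeping rather than any new construction: wideness of $\underline{\cW}$ only furnishes \emph{some} $n$-cokernel, $n$-kernel, or equivalent $n$-extension with terms in $\underline{\cW}$, so I must know that the sequences built from minimal $d$-exact sequences of $\fa$ are themselves minimal and that minimal $n$-cokernels and minimal $n$-extension representatives are unique and occur as direct summands of arbitrary ones. This is precisely the machinery already invoked in Proposition \ref{prop 7 wide p12} (uniqueness of minimal $n$-cokernels and \cite[Lem. 3.14]{Fe19}). The only additional inputs are the elementary observation that connecting morphisms $\fa_j\to\fa_{j+1}$ are automatically radical, which is what preserves minimality when passing from $\fa$ to $\cT_{[0,m]}$, and the decomposition identity $\underline{\cW}\cap\fa_j=\Sigma^j_d\cW$ coming from Lemma \ref{lemma 4 p4}.
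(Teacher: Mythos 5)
Your treatment of $d$-extensions is essentially the paper's argument, and your supporting observations (the identity $\underline{\cW}\cap\fa_j=\Sigma^j_d\cW$ via Lemma \ref{lemma 4 p4}, the fact that connecting morphisms $\fa_j\to\fa_{j+1}$ are automatically radical, uniqueness of minimal $n$-cokernels) are all sound. However, your argument for closure under $d$-cokernels (and dually $d$-kernels) has a genuine gap at the step ``completing it to a $(d+2)$-angle with terms in $\fa_0$; \dots here every $g^i=0$.'' If $\varphi\colon w^0\to w^1$ is not a monomorphism, its minimal $d$-cokernel in $\fa$ cannot be completed to a $(d+2)$-angle lying entirely in $\fa_0$: the $(d+2)$-angle on $\varphi$ in $\cT$ necessarily carries nonzero components in $\fa_1=\Sigma_d\fa_0$ which encode the kernel data. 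Concretely, for $d=1$ and $\varphi$ a non-split epimorphism with kernel $k$, the minimal $1$-cokernel is $w^1\to 0$, while the triangle on $\varphi$ is $w^0\to w^1\to \Sigma k\to \Sigma w^0$ with cone $\Sigma k\in\fa_1$ nonzero; there is no triangle $w^0\to w^1\to 0\to\Sigma w^0$. The heuristic ``$d$-exact sequences of $\fa$ are $(d+2)$-angles with terms in $\fa_0$'' applies only to $d$-exact sequences, which by definition begin with a monomorphism, and the $d$-cokernel of an arbitrary $\varphi$ does not sit inside one.

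The paper's proof avoids this by \emph{not} assuming the $g$-part vanishes: it takes the general angle \eqref{eq 16.1 P29} with $t^i\in\cT_{[0,m+1]}$, passes to the minimal $n$-cokernel whose objects then lie in $\underline{\cW}$, decomposes each term as $\underline{w}^i=w^i\oplus g^i$ with $w^i\in\cW$ and $g^i\in\add(\Sigma_d\cW,\dots,\Sigma^m_d\cW)$, and then applies $\cT_{[0,m]}(-,w)$ for $w\in\cW$; since $\Hom_\cT(\fa_j,\fa_0)=0$ for $j\geq 1$, the $g^i$ contribute nothing and the $\cW$-components $w^1\to w^2\to\cdots\to w^{d+1}$ are exhibited directly as a $d$-cokernel of $\varphi$ in $\fa$ with objects in $\cW$. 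To repair your argument you would need to replace the claim $g^i=0$ by this projection-onto-the-$\fa_0$-component step (or restrict your splitting argument to the case where $\varphi$ is monic, which does not suffice for closure under $d$-cokernels). The same correction is needed in the dual $d$-kernel argument, where the paper works in $\Sigma^m_d\cW$ and uses $\Hom_\cT(\fa_m,\fa_j)=0$ for $j<m$.
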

\begin{proof}
Assume that $\underline{\cW}$ is wide in $\cT_{[0,m]}$. 

\noindent
$\cW$ is closed under $d$-cokernels:  Let $\xymatrix{w^0\ar[r]^{\varphi^0}&w^1}$ be a morphism in $\cW$. We use it as the morphism $\xymatrix{t^0\ar[r]^{\varphi}&t^1}$ in the proof of Theorem \ref{thm higher case B most general}, \ref{d-ab axiom 1} for $n$-cokernels and construct the $(d+2)$-angle \eqref{eq 16.1 P29} and the diagram \eqref{eq 16.2} providing an $n$-cokernel of $\varphi$ in $\cT_{[0,m]}$. Dropping trivial direct summands of the form $\xymatrix{x\ar[r]^{\sim}&x}$ from all morphisms in \eqref{eq 16.2} apart from the first two produces a new diagram providing a minimal $n$-cokernel of $\varphi$ in $\cT_{[0,m]}$ and here all objects must be in $\underline{\cW}$ because $\underline{\cW}$ is wide in $\cT_{[0,m]}$. The first part of the new diagram is 
\begin{align*}
\xymatrix{w^0\ar[r]&w^1\ar[r]&\underline{w}^2\ar[r]&\cdots\ar[r]&\underline{w}^{d+1}\ar[r]&\Sigma_dw^0}
\end{align*}
with $\underline{w}^2,\ldots,\underline{w}^{d+1}\in\underline{\cW}$. We rewrite it as
\begin{align}\label{eq 10.1 wide page 18}
\xymatrixcolsep{3pc}\xymatrixrowsep{3pc}\xymatrix{w^{0}\ar[r]^{\varphi^0}&w^{1}\ar[r]^-{\tiny\begin{pmatrix}
\varphi^{1}\\\psi^{1}
\end{pmatrix}}&w^{2}\oplus g^2\ar[r]^-{\tiny\begin{pmatrix}
\varphi^{2}&0\\\psi^{2}&\gamma^{2}
\end{pmatrix}}&w^{3}\oplus g^3\ar[r]^-{\tiny\begin{pmatrix}
\varphi^{3}&0\\\psi^{3}&\gamma^{3}
\end{pmatrix}}&\cdots\ar[r]^-{\tiny\begin{pmatrix}
\varphi^{d}&0\\\psi^{d}&\gamma^{d}
\end{pmatrix}}&w^{d+1} \oplus g^{d+1} \ar[r]^-{\tiny\begin{pmatrix}
\omega&\delta\end{pmatrix}}&\Sigma_dw^{0}}
\end{align}
with $w^2,\ldots,w^{d+1}\in\cW$, $g^2,\ldots,g^{d+1}\in\add\left(\Sigma_d\cW,\ldots,\Sigma^{m}_{d}\cW\right)$. The zero entries of the matrices are forced since they represent negative extensions. By the $n$-cokernel property, $\cT_{[0,m]}(-,t)$ maps \eqref{eq 10.1 wide page 18} to an exact sequence for $t\in\cT_{[0,m]}$. In particular, we could let $t=w\in\cW$. But applying $\cT_{[0,m]}(-,w)$ to \eqref{eq 10.1 wide page 18} gives
\begin{align*}
\xymatrixcolsep{2pc}\xymatrixrowsep{3pc}\xymatrix{\cW(w^{0},w)&\ar[l]_{\varphi^{0\ast}}\cW(w^{1},w)&\ar[l]_{\varphi^{1\ast}}\cW(w^{2},w)&\ar[l]_{\varphi^{2\ast}}\cW(w^{3},w)&\cdots\ar[l]&\ar[l]\cW(w^{d+1},w)&\ar[l]0}.
\end{align*}
This means that 
\begin{align*}
\xymatrix{w^{0}\ar[r]^{\varphi^{0}}&w^{1}\ar[r]^{\varphi^{1}}&w^{2}\ar[r]^{\varphi^{2}}&w^{3}\ar[r]^{\varphi^{3}}&\cdots\ar[r]^{\varphi^{d}}&w^{d+1}}
\end{align*}
provides a $d$-cokernel of $\varphi^0$ consisting of objects in $\cW$.

\noindent
$\cW$ is closed under $d$-kernels: It is enough to prove that $\Sigma^{m}_{d}\cW$ is closed under $d$-kernels. Let $\xymatrix{\Sigma^{m}_{d}w_1\ar[r]^{\varphi_1}&\Sigma^{m}_{d}w_0}$ be a morphism in $\Sigma^{m}_{d}\cW$. We use it as the morphism $\xymatrix{t_1\ar[r]^{\varphi}&t_0}$ in proof of Theorem \ref{thm higher case B most general}, \eqref{d-ab axiom 1} for $n$-kernels and construct the $(d+2)$-angle \eqref{eq 16.3 P30} and the diagram \eqref{eq 16.4 P30} providing an $n$-kernel of $\varphi$ in $\cT_{[0,m]}$. Dropping trivial direct summands $\xymatrix{x\ar[r]^{\sim}&x}$ from all morphisms in \eqref{eq 16.4 P30} apart from the last two produces a new diagram providing a minimal $n$-kernel of $\varphi$, and here all objects must be in $\underline{\cW}$ because $\underline{\cW}$ is wide in $\cT_{[0,m]}$. The last 
part of the new diagram is
\begin{align*}
\xymatrix{\Sigma^{m-1}_{d}w_{0}\ar[r]&\underline{w}_{d+1}\ar[r]&\cdots\ar[r]&\underline{w}_2\ar[r]&\Sigma^{m}_{d}w_{1}\ar[r]^{\varphi_1}&\Sigma^{m}_{d}w_{0}}
\end{align*}
with $\underline{w}_2,\ldots\underline{w}_{d+1} \in \cW$. We rewrite this as
\begin{align}\label{eq 10.2 wide page 19}
\xymatrix{\Sigma^{m-1}_{d}w_{0}\ar[r]^-{\tiny\begin{pmatrix}
\omega\\\delta
\end{pmatrix}}&\Sigma^{m}_{d}w_{d+1}\oplus h_{d+1}\ar[rr]^-{\tiny\begin{pmatrix}
\varphi_{d+1}&\psi_{d+1}\\0&\eta_{d+1}
\end{pmatrix}}&&\cdots\ar[r]^-{\tiny\begin{pmatrix}
\varphi_{3}&\psi_{3}\\0&\eta_{3}
\end{pmatrix}}&\Sigma^{m}_{d}w_{2}\oplus h_2\ar[r]^-{\tiny\begin{pmatrix}
\varphi_{2}\,\,\psi_{2}
\end{pmatrix}}&\Sigma^{m}_{d}w_{1}\ar[r]^{\varphi_1}&\Sigma^{m}_{d}w_{0}}
\end{align}
with $\Sigma^{m}_{d}w_2,\ldots,\Sigma^{m}_{d}w_{d+1}\in\Sigma^{m}_{d}\cW$, $h_2,\ldots,h_{d+1}\in\add\left(\cW,\Sigma^{m}_{d}\cW,\ldots,\Sigma^{m-1}_{d}\right)$. The zero entries of the matrices are forced since they represent negative extensions. By the $n$-kernel property, $\cT_{[0,m]}(t,-)$ maps \eqref{eq 10.2 wide page 19} to an exact sequence for $t\in\cT_{[0,m]}$. In particular, we could let $t=\Sigma^{m}_{d}w\in\Sigma^{m}_{d}\cW$. But applying $\cT_{[0,m]}(\Sigma^{m}_{d}w,-)$ to \eqref{eq 10.2 wide page 19} gives 
\begin{align*}
\xymatrix{0\ar[r]&( \Sigma^{m}_{d}\cW )(\Sigma^{m}_{d}w,\Sigma^{m}_{d}w_{d+1})\ar[r]^-{\varphi_{d+1\ast}}&\cdots\\\cdots\ar[r]^-{\varphi_{3\ast}}& ( \Sigma^{m}_{d}\cW )(\Sigma^{m}_{d}w,\Sigma^{m}_{d}w_{2})\ar[r]^{\varphi_{2\ast}}& ( \Sigma^{m}_{d}\cW )(\Sigma^{m}_{d}w,\Sigma^{m}_{d}w_{1})\ar[r]^{\varphi_{1\ast}}& ( \Sigma^{m}_{d}\cW )(\Sigma^{m}_{d}w,\Sigma^{m}_{d}w_{0})}
\end{align*}
This means that 
\begin{align*}
\xymatrix{\Sigma^{m}_{d}w_{d+1}\ar[r]^-{\varphi_{d+1}}&\cdots\ar[r]^{\varphi_3}&\Sigma^{m}_{d}w_{2}\ar[r]^{\varphi_{2}}&\Sigma^{m}_{d}w_{1}\ar[r]^{\varphi_{1}}&\Sigma^{m}_{d}w_{0}}
\end{align*}
provides a $d$-kernel of $\varphi_1$ consisting of objects in $\Sigma^{m}_{d}\cW$.\\

$\cW$ is closed under $d$-extensions: Let 
\begin{align*}
\xymatrix{a^{0}\ar[r]^{\alpha^{0}}&a^{1}\ar[r]^{\alpha^{1}}&\cdots\ar[r]&a^{d}\ar[r]^{\alpha^{d}}&a^{d+1}}
\end{align*}
be a $d$-exact sequence in $\fa$ with $a^0,a^{d+1}\in\cW$ and $\alpha^1,\ldots,\alpha^{d-1}\in\rad$. We must show $a^1,\ldots,a^d\in\cW$. Dropping trivial summands $\xymatrix{x\ar[r]^{\sim}&x}$ from $\alpha^0$ and $\alpha^d$, we can assume we also have $\alpha^0,\alpha^d\in\rad$. The induced $(d+2)$-angle 
\begin{align*}
\xymatrix{a^{0}\ar[r]^{\alpha^{0}}&a^{1}\ar[r]^{\alpha^{1}}&\cdots\ar[r]&a^{d}\ar[r]^{\alpha^{d}}&a^{d+1}\ar[r]^{\alpha^{d+1}}&\Sigma_da^0}
\end{align*}
also has $\alpha^{d+1}\in\rad$ because $a^{d+1}\in\fa,\Sigma_da^0\in\Sigma_d\fa$. Using this angle as \eqref{eq 16.1 P29} gives a diagram \eqref{eq 16.2} of the form
\begin{align}\label{eq 10.3 wide page 21}
& \xymatrix{a^{0}\ar[r]^{\alpha^0}&a^{1}\ar[r]&\cdots\ar[r]&a^{d}\ar[r]&a^{d+1}\ar[r]&\Sigma_d a^{0}\ar[r]&\Sigma_d a^{1}\ar[r]&\cdots}\\
\nonumber
& \xymatrix{\cdots\ar[r]&\Sigma_d a^{d}\ar[r]&\Sigma_d a^{d+1}\ar[r]&\cdots}\\
\nonumber
& \xymatrix{\cdots\ar[r]&\Sigma^{m}_{d} a^{0}\ar[r]&\Sigma^{m}_{d} a^{1}\ar[r]&\cdots\ar[r]&\Sigma^{m}_{d} a^{d}\ar[r]&\Sigma^{m}_{d} a^{d+1}}
\end{align}
with each arrow in $\rad$. The diagram provides a cokernel of $\alpha^0$ in $\cT_{[0,m]}$. Now note that since $\xymatrix{a^0\ar[r]^{\alpha^0}&a^1}$ is monic in $\fa$, it is also monic in $\cT_{[0,m]}=\add\left(\fa,\Sigma^d\fa,\ldots,\Sigma^{md}\fa\right)$ since $\Sigma^d\fa,\ldots,\Sigma^{md}\fa$ only have zero morphisms to $a^0$ and $a^1$. So \eqref{eq 10.3 wide page 21} is a minimal $n$-exact sequence in $\cT_{[0,m]}$. Since $a^0\in\cW\subseteq\underline{\cW}$ and $\Sigma^{m}_{d}a^{d+1}\in\Sigma^{m}_{d}\cW\subseteq\underline{\cW}$ while $\underline{\cW}$ is wide, we learn $a^1,\ldots,a^d\in\underline{\cW}$ whence $a^1,\ldots,a^{d}\in\cW$ as desired.
\end{proof}

The following lemma has essentially the same proof as \cite[Lem. 4.3]{Fe19}.
\begin{lemma}\label{lemma 11 wide page 22} Let $\cW\in\fa$ be an additive subcategory. Then $\cW$ is functorially finite in $\fa$ $\Leftrightarrow$ $\underline{\cW}$ is functorially finite in $\cT_{[0,m]}$.
\end{lemma}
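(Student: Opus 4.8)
The plan is to unwind "functorially finite" into the existence of left and right approximations (preenvelopes and precovers) and to exploit the $\Sigma_d$-symmetry, so that it suffices to treat covariant finiteness (preenvelopes); contravariant finiteness (precovers) is entirely dual. The one structural fact I would use repeatedly is the vanishing $\Hom_\cT(\fa_i,\fa_j)=0$ for $j\notin\{i,i+1\}$, which here is $\Ext^{d(j-i)}_\Lambda(\fa,\fa)=0$ for $j-i\notin\{0,1\}$, coming from $\gldim\Lambda\le d$ together with $d$-rigidity of $\fa$. Consequently every object of $\cT_{[0,m]}$ splits as $t=\bigoplus_{j=0}^m\Sigma^j_d c_j$ with $c_j\in\fa$, and every morphism of $\cT_{[0,m]}$ is upper triangular with respect to this layer grading.

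For the direction $\underline\cW$ functorially finite $\Rightarrow\cW$ functorially finite I would project to the bottom layer. Given $c\in\fa=\fa_0$, choose a $\underline\cW$-preenvelope $c\to\underline w$ and write $\underline w=\bigoplus_{j=0}^m\Sigma^j_d w_j$ with $w_j\in\cW$; its degree-zero component $\eta_0\colon c\to w_0$ is then a $\cW$-preenvelope in $\fa$. Indeed, any $c\to w$ with $w\in\cW$ factors as $c\to\underline w\xrightarrow{\rho}w$, and since $\Hom_\cT(\fa_j,\fa_0)=0$ for $j\ge1$ the morphism $\rho$ vanishes off the bottom layer and hence factors through the projection $\underline w\twoheadrightarrow w_0$; composing exhibits the required factorization through $\eta_0$. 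The very same projection, applied to a $\underline\cW$-precover $\underline w\to c$, produces a $\cW$-precover of $c$, again by $\Hom_\cT(\fa_j,\fa_0)=0$ for $j\ge1$.

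For the converse, $\cW$ functorially finite $\Rightarrow\underline\cW$ functorially finite, the cleanest route I see is to pass through the ambient $(d+2)$-angulated category $\cT$ and descend along the truncation adjoints. By \cite[Lem. 4.3]{Fe19}, covariant finiteness of $\cW$ in $\fa$ yields covariant finiteness of $\overline\cW$ in $\cT$. Given $t\in\cT_{[0,m]}$, pick an $\overline\cW$-preenvelope $t\xrightarrow{\psi}\overline w$ in $\cT$; by the vanishing above the only summands of $\overline w$ that can receive a nonzero map from $t$ lie in layers $0,\dots,m+1$, so after discarding superfluous summands I may assume $\overline w\in\add(\cW,\Sigma_d\cW,\dots,\Sigma^{m+1}_d\cW)$. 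Applying the left adjoint $\tau_{\le m}$ of Lemma \ref{lemma 7 p7} (and using $\tau_{\le m}t=t$) gives $\tau_{\le m}\psi\colon t\to\tau_{\le m}\overline w$ with $\tau_{\le m}\overline w\in\underline\cW$, and this is a $\underline\cW$-preenvelope: for $\underline w'\in\underline\cW\subseteq\cT_{\le m}$ one factors $t\to\underline w'$ through $\psi$, then factors the resulting map out of $\overline w$ through the $\cT_{\le m}$-envelope $\overline w\to\tau_{\le m}\overline w$ by the adjunction of Lemma \ref{lemma 7 p7}. Precovers are obtained dually with $\tau_{\ge0}$ of Lemma \ref{lemma 6 p6}, discarding the layer $-1$ summand and using $\overline\cW\cap\cT_{[0,m]}=\underline\cW$.

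I expect the main obstacle to be precisely the interaction between adjacent layers, carried by the off-diagonal terms $\Ext^d_\Lambda(c_j,w_{j+1})$ of the triangular morphisms: a naive layerwise assembly of $\cW$-preenvelopes does factor the diagonal terms but fails to factor these connecting extensions. The descent above sidesteps this by packaging it into \cite[Lem. 4.3]{Fe19} and the exactness of $\tau_{\le m},\tau_{\ge0}$. If instead one wants a direct proof mirroring \cite[Lem. 4.3]{Fe19}, one would induct on the number of layers and, at each stage, choose the approximation so as to also cover the connecting extension from the layer below; this is where contravariant finiteness of $\cW$ re-enters, via higher Auslander--Reiten duality $\Ext^d_\Lambda(c_j,-)\cong D\Hom_\Lambda(-,\tau_d c_j)$, which turns the $\Ext^d$-covering condition into an ordinary $\cW$-approximation problem. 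Checking that the assembled morphism really is universal would be the technical heart of that route.
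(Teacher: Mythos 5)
Your argument is correct, but both directions take a genuinely different route from the paper's. For the implication ($\cW$ functorially finite $\Rightarrow$ $\underline{\cW}$ functorially finite), the paper does not pass through $\overline{\cW}$ and the truncation adjoints at all: it observes that $\cW$ is functorially finite in $D^b(\modd\Lambda)$ via the chain $\cW\subseteq\fa\subseteq\modd\Lambda\subseteq D^b(\modd\Lambda)$, hence so is each $\Sigma^{id}\cW$, and then simply takes the direct sum of a $\Sigma^{id}\cW$-approximation of $t$ for $i=0,\dots,m$. Your descent through an $\overline{\cW}$-approximation in $\cT$ followed by $\tau_{\le m}$ (resp.\ $\tau_{\ge 0}$) from Lemmas \ref{lemma 7 p7} and \ref{lemma 6 p6} is equally valid, but it outsources the key step to \cite[Lem.\ 4.3]{Fe19}, whereas the paper's version is self-contained (and indeed the paper only says its proof is \emph{analogous} to Fedele's, it does not invoke it). Your worry about the off-diagonal $\Ext^d$-terms is moot for the paper's route, since the layerwise approximations are taken of the whole object $t$ inside $D^b(\modd\Lambda)$, not assembled from approximations internal to $\fa$. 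For the converse, the paper works with minimal covers/envelopes and, for the envelope, applies the shift trick of approximating $\Sigma^{md}a$ so that only the top layer can be hit, forcing the envelope to be concentrated in a single layer; your argument instead extracts the degree-zero component of a (not necessarily minimal) $\underline{\cW}$-preenvelope of $a$ itself and checks the factorization property directly from $\Hom_{\cT}(\fa_j,\fa_0)=0$ for $j\ge 1$. This is slightly cleaner: it avoids both minimality and the shift, at the cost of having to note that the factoring map out of $\underline{w}$ kills all higher layers. Both approaches are sound.
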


\begin{proof}
$\Rightarrow$: Since $\cW$ is functorially finite in $\fa$ and $\fa$ is functorially finite in $\modd\Lambda$ and $\modd\Lambda$ is functorially finite in $D^b(\modd\Lambda)$, we learn that $\cW$ is functorially finite in $D^b(\modd\Lambda)$. Since $\Sigma$ is an automorphism, $\Sigma^j\cW$ is functorially finite in $D^b(\modd\Lambda)$ for each $j$. Given $t\in\cT_{[0,m]}\subset D^b(\modd\Lambda)$,
 we pick for each $i$ a $\Sigma^{id}\cW$-precover $\xymatrix{\Sigma^{id}w^{'}_{i}\ar[r]^-{\omega^{'}_{i}}&t}$ and a $\Sigma^{id}\cW$-preenvelope $\xymatrix{t\ar[r]^-{\omega^{''}_i}&\Sigma^{id}w^{''}_i}$. Then
\begin{align*}
& \xymatrix{\Sigma^{0d}w^{'}_0\oplus\cdots\oplus\Sigma^{md}w^{'}_{m}\ar[rrrrr]^-{\omega'=\tiny\begin{pmatrix}
\omega^{'}_{0},\ldots,\omega^{'}_{m}
\end{pmatrix}}&&&&&t},\\
& \xymatrix{t\ar[rrrr]^-{\omega^{''}=\tiny\begin{pmatrix}
\omega^{'}_{0}\\\hdots\\\omega^{'}_{m}
\end{pmatrix}}&&&&\Sigma^{0d}w^{''}_0\oplus\cdots\oplus\Sigma^{md}w^{''}_{m}}
\end{align*}
 are a $\cW$-precover and a $\cW$-preenvelope.
 
 \noindent
 $\Leftarrow:$ Let $a\in\fa$ be given.\\
 Since $a\in\cT_{[0,m]}$   there is a $\cW$-cover $\xymatrix{\underline{w}^{'}\ar[r]^{\underline{\omega}^{'}}&a}$. We have $\underline{w}'=\Sigma^{0d}w^{'}_{0}\oplus\cdots\oplus\Sigma^{md}w^{'}_{d}$ with $w^{'}_{0},\ldots,w^{'}_{d}\in\cW$, and $\Sigma^{1d}w^{'}_{1},\ldots,\Sigma^{md}w^{'}_{d}$ have only zero morphisms to $a$ so must be zero. So $\underline{\omega}'$ has the form $w^{'}_{0}\rightarrow a$, and this is clearly a $\cW$-cover of $a$.\\
 
 Since $\Sigma^{md}a\in\cT_{[0,m]}$   there is a $\cW$-envelope $\xymatrix{\Sigma^{md}a\ar[r]^{\underline{\omega}^{''}}&\underline{w}^{''}}$. We have $\underline{w}''=\Sigma^{0d}w^{''}_{0}\oplus\cdots\oplus\Sigma^{md}w^{''}_{d}$ with $w^{''}_{0},\ldots,w^{''}_{d}\in\cW$, and $\Sigma^{md}a$ has only zero morphisms to $\Sigma^{0d}w^{''}_{d},\ldots,\Sigma^{(m-1)d}w^{''}_{m-1}$ which must accordingly be zero. So $\underline{\omega}''$ has the form $\Sigma^{md}a\rightarrow \Sigma^{md}w^{''}_{d}$. In particular, this has the preenveloping property with respect to $\Sigma^{md}\cW$, so $a\rightarrow w^{''}_{d}$ is a $\cW$-envelope.
 \end{proof}

\begin{definition}\label{def 12 of wide repetitive subcategories} The additive subcategories of $\cT_{[0,m]}$ of the form $\underline{\cW}$ for an additive subcategory $\cW\subseteq\fa$ will be called repetitive.
\end{definition}

\begin{theorem}
There are inverse bijections
\begin{align*}
    \left\{
        \begin{aligned}
        &\text{functorially finite wide}\\
            & \text{subcategories of}\,\,\fa
        \end{aligned}
    \right\}  &  \left.
        \begin{aligned}
            &\ni \cW\longmapsto \;\;\;\;\; \overline{\cW}\in \\
            &\ni \fa\cap\mathscr{X} \reflectbox{\small $\longmapsto$} \; \mathscr{X}\in \\
        \end{aligned}
    \right.  \left\{
        \begin{aligned}
            & \text{functorially finite repetitive} \\
            & \text{wide subcategories of}\,\, \cT_{[0,m]}\\
        \end{aligned}
    \right\}.
\end{align*}
\end{theorem}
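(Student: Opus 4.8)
The plan is to exhibit the two assignments as mutually inverse bijections and then to check that each carries the distinguished subclass on its side into the distinguished subclass on the other. The relevant maps are $\cW \mapsto \overline{\cW} \cap \cT_{[0,m]} = \underline{\cW}$ and $\mathscr{X} \mapsto \fa \cap \mathscr{X}$, where the identity $\overline{\cW} \cap \cT_{[0,m]} = \underline{\cW}$ recorded after Definition \ref{definition of certain wide cats wide page 16} reconciles the notation in the statement.

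First I would dispose of the purely formal part. For any additive subcategory $\cW \subseteq \fa$ one has $\fa \cap \underline{\cW} = \cW$: by Set-up \ref{setup 1 page 1} every object of $\underline{\cW} = \add(\cW, \Sigma_d\cW, \ldots, \Sigma^m_d\cW)$ decomposes as $w_0 \oplus \Sigma_d w_1 \oplus \cdots \oplus \Sigma^m_d w_m$ with $w_j \in \cW$, and this lies in $\fa = \fa_0$ precisely when $w_1 = \cdots = w_m = 0$. Dually, if $\mathscr{X}$ is repetitive, say $\mathscr{X} = \underline{\cW}$ in the sense of Definition \ref{def 12 of wide repetitive subcategories}, then $\fa \cap \mathscr{X} = \cW$ and hence $\underline{(\fa \cap \mathscr{X})} = \underline{\cW} = \mathscr{X}$. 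Thus the two assignments are already inverse bijections between additive subcategories of $\fa$ and repetitive additive subcategories of $\cT_{[0,m]}$; it remains only to match up the functorially finite wide subcategories on both sides.

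For the forward direction I would start from a functorially finite wide subcategory $\cW$ of $\fa$. By Fedele's theorem \cite[Thm. A]{Fe19}, $\overline{\cW}$ is a functorially finite wide subcategory of $\cT$, so Proposition \ref{prop 7 wide p12} makes $\overline{\cW} \cap \cT_{[0,m]} = \underline{\cW}$ wide in $\cT_{[0,m]}$; it is repetitive by definition, and functorially finite by Lemma \ref{lemma 11 wide page 22}. Conversely, given a functorially finite repetitive wide subcategory $\mathscr{X} = \underline{\cW}$ of $\cT_{[0,m]}$, Proposition \ref{prop 10 wide page 17} turns wideness of $\underline{\cW}$ into wideness of $\cW = \fa \cap \mathscr{X}$ in $\fa$, and Lemma \ref{lemma 11 wide page 22} turns functorial finiteness of $\underline{\cW}$ into functorial finiteness of $\cW$. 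Together with the formal bijection above, this shows the two assignments restrict to the claimed inverse bijections.

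The bulk of the work has already been carried out in Propositions \ref{prop 7 wide p12} and \ref{prop 10 wide page 17} and in Lemma \ref{lemma 11 wide page 22}, so the argument here is essentially an assembly. I expect the only genuinely delicate point to be the forward transport of wideness, namely that $\cW$ wide in $\fa$ implies $\underline{\cW}$ wide in $\cT_{[0,m]}$: rather than proving this inside $\cT_{[0,m]}$ directly, the plan routes it through the ambient $(d+2)$-angulated category $\cT$, first using Fedele's bijection to pass to the $\Sigma_d^{\pm}$-invariant wide subcategory $\overline{\cW}$ and then descending to $\cT_{[0,m]}$ by intersection via Proposition \ref{prop 7 wide p12}.
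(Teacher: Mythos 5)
Your proposal is correct and follows essentially the same route as the paper's proof: forward transport of wideness via Fedele's theorem applied to $\overline{\cW}$ followed by Proposition \ref{prop 7 wide p12}, backward transport via Proposition \ref{prop 10 wide page 17}, functorial finiteness in both directions via Lemma \ref{lemma 11 wide page 22}, and repetitiveness by definition. The only difference is that you spell out why the assignments are mutually inverse (using the uniqueness of decompositions from Set-up \ref{setup 1 page 1}), which the paper dismisses as clear.
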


\begin{proof}
Let $\cW\subseteq\fa$ be a functorially finite wide subcategory. Then $\underline{\cW}\subseteq\cT_{[0,m]}$ is functorially finite by Lemma \ref{lemma 11 wide page 22} and repetitive by definition \ref{def 12 of wide repetitive subcategories}. By \cite[Thm. A]{Fe19}, $\overline{\cW}$ is wide in $\cT$ whence $\overline{\cW}\cap\cT_{[0,m]}=\underline{\cW}$ is wide in $\cT_{[0,m]}$ by Proposition \ref{prop 7 wide p12}.

Let $\fX\subseteq\cT_{[0,m]}$ be a functorially finite repetitive wide subcategory. If we set $\cW=\fa\cap\fX$ then $\fX=\underline{\cW}$. Lemma \ref{lemma 11 wide page 22} gives $\cW\subseteq\fa$ functorially finite and Proposition \ref{prop 10 wide page 17} gives $\cW\subseteq\fa$ wide.

The maps are clearly inverses.
\end{proof}

\section{Questions}\label{section questions} 
 
\begin{problem} 
Corollary \ref{thm MAIN COR} provides $d$-abelian categories with interesting properties for $d=3m+1$ where $m\geq 0$ is an integer.  Let $d$ be an integer which is not congruent to $1$ modulo $3$.  
\begin{enumerate}[label=\arabic*)]
\item Does there exist a $d$-abelian category that is not $\mathbb{K}$-linear but possesses both products and coproducts?
\item Does there exist a $d$-abelian category that does not have a Serre functor?
\item Does there exist a $d$-abelian category that does not have enough injectives?
\end{enumerate}
\end{problem}

\begin{problem}
In Theorem \ref{thm d-abelian implies hereditary A'} we show the reverse implication of Theorem \ref{thm hereditary implies d-abelian}.  Is there an analogous reverse implication of Theorem \ref{thm higher case B most general}?
\end{problem}

\begin{problem}\label{problem 6.3}
Let $\Lambda$ be a finite dimensional algebra, $\modd\Lambda$ the category of finitely generated left modules, $D^b(\modd\Lambda)$ its derived category. For which values of $d$ does there exist a $d$-abelian category $\cC$ contained in $D^b(\modd\Lambda)$ ?
\end{problem}

\bibliographystyle{alpha}

\begin{thebibliography}{[BIKR08]}

\bibitem[BS01]{BS01}
P.\ Balmer and M.\ Schlichting, {\it Idempotent completion of triangulated categories,} J.\ Algebra {\bf 236} (2001), 819--834.

\bibitem[BIKR08]{BIKR08}
I.\ Burban, O.\ Iyama, B.\ Keller, and I.\ Reiten,
{\it Cluster tilting for one-dimensional hypersurface singularities,} Adv.\ Math.\ {\bf 217} (2008), 2443--2484.

\bibitem[Bru07]{Br07}
K.\ Br\"{u}ning, {\it Thick subcategories of the derived category of a hereditary algebra,} Homology Homotopy Appl.\ {\bf 9} (2007), 165--176.

\bibitem[EN20]{EN20}
R.\ Ebrahimi and A.\ Nasr-Isfahani, {\it Pure semisimple $n$-cluster tilting subcategories,} J.\ Algebra {\bf 549} (2020), 177--194.

\bibitem[Fed19]{Fe19}
F.\ Fedele, {\it Auslander--Reiten $(d+2)$-angles in subcategories and a $(d+2)$-angulated generalisation of a theorem by Br\"{u}ning,} J.\ Pure Appl.\ Algebra {\bf 223} (2019), 3554--3580.

\bibitem[GKO13]{geiss2013n}
C.\ Geiss, B.\ Keller, and S.\ Oppermann, {\it $n$-angulated categories,} J.\ Reine Angew.\ Math.\ {\bf 675} (2013), 101--120.

\bibitem[HJV20]{HJV20}
M.\ Herschend, P.\ J\o{}rgensen, and L.\ Vaso, {\it Wide subcategories of $d$-cluster tilting subcategories,} Trans.\ Amer.\ Math.\ Soc.\ {\bf 373} (2020), 2281--2309.

\bibitem[Iya07]{iyama2007auslander}
O. Iyama, {\it Auslander correspondence,} Adv.\ Math.\ {\bf 210} (2007), 51--82.

\bibitem[Iya11]{iyama2011clusterMAINpaper}
O.\ Iyama, {\it Cluster tilting for higher Auslander algebras,} Adv.\ Math.\ {\bf 226} (2011), 1--61.

\bibitem[IY08]{IY08}
O.\ Iyama and Y.\ Yoshino, {\it Mutation in triangulated categories and rigid Cohen--Macaulay modules,} Invent.\ Math.\ {\bf 172} (2008), 117--168.

\bibitem[Jas16]{Jas16}
G.\ Jasso, {\it $n$-abelian and $n$-exact categories,}Math.\ Z.\ {\bf 283} (2016), 703--759.

\bibitem[KS06]{KS06}
M.\ Kashiwara and P.\ Schapira, ``Categories and Sheaves'', Grundlehren Math.\ Wiss., Vol.\ 332, 
Springer, Berlin, 2006.

\bibitem[Kva22]{Kva22}
S.\ Kvamme, {\it Axiomatizing subcategories of abelian categories,} J.\ Pure Appl.\ Algebra {\bf 226} (2022), Paper No.\ 106862.

\bibitem[Len97]{Len97}
H.\ Lenzing, {\it Hereditary noetherian categories with a tilting complex,} Proc.\ Amer.\ Math.\ Soc.\ {\bf 125} (1997), 1893--1901.

\bibitem[Nee01]{Nee01}
A.\ Neeman, ``Triangulated categories'', Ann.\ of Math.\ Stud., Vol.\ 148, Princeton University Press, Princeton, 2001.

\bibitem[RVB02]{RVB02}
I.\ Reiten, M.\ Van den Bergh, {\it Noetherian hereditary abelian categories satisfying Serre duality,} J.\ Amer.\ Math.\ Soc.\ {\bf 15} (2002), 295--366.

\bibitem[Sen23]{sen2023higher}
E.\ Sen, Higher Auslander algebras arising from Dynkin quivers and $n$-representation finite algebras, preprint (2023).  {\tt arXiv:2307.13262.}

\bibitem[Zho24]{Zho24}
Y.\ Zhou, {\it Tilting theory for extended module categories,} preprint (2024).  {\tt arXiv:2411.15473.}




\end{thebibliography}

\end{document}